\theoremstyle{plain}
\newtheorem{theorem}{Theorem}[section]
\newtheorem{lemma}[theorem]{Lemma}
\newtheorem{corollary}[theorem]{Corollary}
\newtheorem{conjecture}[theorem]{Conjecture}
\theoremstyle{definition}
\newtheorem{definition}[theorem]{Definition}
\theoremstyle{remark}
\newtheorem*{remark}{Remark}
\def \t {\mathbf t}
\def \x {\mathbf x}
\def \v {\mathbf v}
\def \n {\mathbf n}
\def \m {\mathbf m}
\def \c {\mathbf c}
\def \e {\mathbf e}
\def \w {\mathbf w}
\def \R {\mathbf R}
\def \U {\mathbf U}
\def \V {\mathbf V}
\def \A {\mathbf A}
\def \M {\mathbf M}
\def \I {\mathbf I}
\begin{document}

\title[Ball-packability]{Apollonian Ball Packings and Stacked Polytopes}
\author{Hao Chen}
\address[H. Chen]{Freie Universit\"at Berlin, Institut f\"ur Mathematik}
%\email{hao.chen@fu-berlin.de}
\keywords{Sphere packing, ball packing, stacked polytope, forbidden subgraph}
\subjclass[2010]{Primary 52C17, 52B11; Secondary 20F55}
\begin{abstract}
  We investigate in this paper the relation between Apollonian $d$-ball packings and stacked $(d+1)$-polytopes for dimension $d\ge 3$. For $d=3$, the relation is fully described: we prove that the $1$-skeleton of a stacked $4$-polytope is the tangency graph of an Apollonian $3$-ball packing if and only if no six $4$-cliques share a $3$-clique. For higher dimension, we have some partial results.
\end{abstract}
\thanks{
The author was supported by the Deutsche Forschungsgemeinschaft within the Research Training Group `Methods for Discrete Structures' (GRK 1408).
}
\maketitle

\section{Introduction}
A ball packing is a collection of balls with disjoint interiors.  A graph is said to be ball packable if it can be realized by the tangency relations of a ball packing.  The combinatorics of disk packings ($2$-dimensional ball packings) is well understood thanks to the Koebe--Andreev--Thurston's disk packing theorem, which asserts that every planar graph is disk packable. However, few is known about the combinatorics of ball packings in higher dimensions. 

In this paper we study the relation between Apollonian ball packings and stacked polytopes. An Apollonian ball packing is constructed from a Descartes configuration by repeatedly filling new balls into ``holes''. A stacked polytope is constructed from a simplex by repeatedly gluing new simplices onto facets. See Section~\ref{sse:apollonian} and~\ref{sse:stacked} respectively for formal descriptions.  There is a $1$-to-$1$ correspondence between $2$-dimensional Apollonian ball packings and $3$-dimensional stacked polytopes. Namely, a graph can be realised by the tangency relations of an Apollonian disk packing if and only if it is the $1$-skeleton of a stacked $3$-polytope.  However, this relation does not hold in higher dimensions.  

On one hand, the $1$-skeleton of a stacked polytope may not be realizable by the tangency relations of any Apollonian ball packing.  Our main result, proved in Section \ref{sec:stacked}, give a condition on stacked $4$-polytopes to restore the relation in this direction:
\begin{theorem}[Main result]\label{thm:main}
  The $1$-skeleton of a stacked $4$-polytope is $3$-ball packable if and only if it does not contain six $4$-cliques sharing a $3$-clique.
\end{theorem}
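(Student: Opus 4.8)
The plan is to route both directions through a single geometric fact about how many balls can be simultaneously tangent to three mutually tangent balls. First I would normalize: given three mutually tangent balls $A,B,C$, apply a M\"obius (inversive) transformation sending the point of tangency of $A$ and $B$ to infinity. Tangency is preserved, $A$ and $B$ become two parallel planes, and $C$ becomes a ball of radius $r$ inscribed in the slab between them. Every ball tangent to all three then has radius exactly $r$, and its center lies on the mid-plane at distance $2r$ from the center of $C$; hence all such balls are congruent and their centers lie on a common circle of radius $2r$. Two of them subtending an angle $\theta$ at the center of $C$ have centers at distance $4r\sin(\theta/2)$, so disjoint interiors force $\theta\ge 60^\circ$. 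Consequently at most six balls can be tangent to $A,B,C$, and if six occur they sit at exactly $60^\circ$ spacing, so consecutive balls are mutually tangent and their tangency graph is precisely a $6$-cycle. This is the crux on which everything hangs.

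For the ``only if'' direction I would argue the contrapositive. Suppose the skeleton is realized by a packing and that six $4$-cliques $\{a,b,c,x_i\}$ share the $3$-clique $\{a,b,c\}$. Then $a,b,c$ are three mutually tangent balls and $x_1,\dots,x_6$ are six balls tangent to all three, so by the lemma their mutual tangencies form a full $6$-cycle. On the other hand, a stacked $4$-polytope skeleton is a $4$-tree, built from $K_5$ by repeatedly adding a vertex joined to a $4$-clique. A short induction shows that the vertices adjacent to all of $a,b,c$ induce a forest: when such a vertex is added it attaches to a $4$-clique containing $\{a,b,c\}$, hence to a single earlier member of this set, its unique parent, and no other earlier edge appears. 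A forest on six vertices has at most five edges and no cycle, so it cannot be the $6$-cycle demanded by the geometry. Thus the packing would exhibit a tangency not recorded by an edge of the skeleton, contradicting faithfulness; hence a packable skeleton cannot contain six $4$-cliques sharing a $3$-clique.

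For the ``if'' direction I would build the packing by mirroring the stacking with Apollonian insertions, inducting on the number of stacked simplices and maintaining a bijection between current holes and current facets. The $4$-simplex corresponds to a Descartes configuration of five mutually tangent balls with tangency graph $K_5$. Stacking a simplex onto a tetrahedral facet $F=\{a,b,c,d\}$ corresponds to inscribing the inner Soddy ball $B_v$ tangent to the four balls of $F$ inside the curvilinear-tetrahedral hole they bound, which is empty by induction. Disjointness from the rest of the packing is therefore automatic, so the only point to check is faithfulness: that $B_v$ touches exactly $a,b,c,d$ and no fifth ball. Any unwanted tangency would be with a ball that also touches three of $a,b,c,d$, i.e. a ball lying on the circle associated with one of the four triangles of $F$; such a ball is forced tangent to $B_v$ only when that circle is already saturated with six balls---precisely the configuration excluded by hypothesis. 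Since every intermediate polytope is itself stacked and inherits the ``no six $4$-cliques on a $3$-clique'' condition, each triangle carries at most five tangent balls throughout, their angular gaps stay strictly above $60^\circ$, and no necklace closes up. Hence each insertion adds exactly the four prescribed edges and the final tangency graph is the skeleton.

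The main obstacle is the faithfulness step in the sufficiency direction: making rigorous that the only way $B_v$ can acquire an extra tangency is through a saturated triangle of $F$. This requires a local analysis of the hole---showing that a ball poking into it near an edge or a vertex of the curvilinear tetrahedron, rather than across a face, cannot be tangent to $B_v$ without itself being one of $a,b,c,d$---and then quantifying, via the $60^\circ$ angular bound, that five balls per triangle leave strictly positive room. The necessity direction, by contrast, is essentially forced once the geometric lemma and the forest property of $4$-trees are in hand.
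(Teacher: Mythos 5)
Your ``only if'' direction is sound and close to the paper's: your normalization (two of the three mutually tangent balls sent to parallel half-spaces, the third a ball of radius $r$ between them, all common tangent balls congruent with centers on a circle, angular separation at least $60^\circ$) is a M\"obius-equivalent form of the computation behind Theorem~\ref{thm:pk} and Corollary~\ref{cor:gk}, and your observation that the common neighbors of a $3$-clique in a stacked $4$-polytopal graph induce a forest (in fact a path, by Lemma~\ref{lem:pathneighbor}) while six balls tangent to three mutually tangent balls are forced into a $6$-cycle is exactly the content of Lemma~\ref{lem:k3p6} combined with the non-packability of $K_3\star P_6$.

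The ``if'' direction, however, has a genuine gap at precisely the point you flag as ``the main obstacle,'' and it is not a technicality that a local analysis will fix. In dimension $3$ the complement of four (or even five) mutually tangent balls is \emph{connected}: three mutually tangent $2$-spheres do not enclose anything, so the ``curvilinear-tetrahedral hole'' bounded by a facet $\{a,b,c,d\}$ communicates with the rest of space through the four triangular windows. Hence your claim that ``disjointness from the rest of the packing is automatic'' because the hole is empty is false, and your claim that any unwanted contact with $B_v$ must come from a ball tangent to three of $a,b,c,d$ (i.e.\ lying on one of the four circles) is unproven and is in fact the whole theorem: a ball added many stacking steps away, tangent to none or only one or two of $a,b,c,d$, can a priori wander back through a window and overlap $B_v$. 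The paper's proof exists precisely to rule this out. It first reduces, by an argument on degree-$4$ vertices, to the case where the dual tree of the stacked polytope is a path, so that the packing is a generalized Coxeter sequence generated by a word in the Apollonian group; it then uses the presentation of the $3$-dimensional Apollonian group as a hyperbolic Coxeter group (Theorem~\ref{thm:graham}) and a weighted-mass monotonicity argument for reduced words (Lemmas~\ref{lem:incmass} and~\ref{lem:pathtree}) to show that the second curvature-center coordinate strictly increases, forcing the later balls away from the initial half-space. None of this global control is present in your outline, so the sufficiency direction remains unestablished.
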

For higher dimensions, we propose Conjecture \ref{conj:higher} following the pattern of dimension~$2$ and~$3$.  On the other hand, the tangency graph of a ball packing may not be the $1$-skeleton of any stacked polytope. We prove in Corollary \ref{cor:hexletfree} that this is the case for a $3$-dimensional ball packing containing Soddy's hexlet, a special packing consisting of nine balls.  However, ball packings of dimension higher than $3$ do not have this problem, as we will prove in Theorem \ref{thm:not3d}.

The paper is organized as follows. In Section~\ref{sec:prelim}, we introduce the notions related to Apollonian ball packings and stacked polytopes. In Section~\ref{sec:join}, we construct ball packings for some graph joins. These constructions provide forbidden induced subgraphs for the tangency graphs of ball packings, which are helpful for the intuition, and some are useful in the proofs. The main result and related results are proved in Section~\ref{sec:stacked}. Finally, we discuss in Section~\ref{sec:discuss} about edge-tangent polytopes, an object closely related to ball packings.

\section{Definitions and preliminaries}\label{sec:prelim}
\subsection{Ball packings}

We work in the $d$-dimensional extended Euclidean space $\hat{\mathbb{R}}^d=\mathbb{R}^d\cup\{\infty\}$.  A \emph{$d$-ball} of curvature $\kappa$ means one of the following sets:
\begin{itemize}
  \item $\{\x\mid\lVert\x-\c\rVert\leq 1/\kappa\}$ if $\kappa>0$;
  \item $\{\x\mid\lVert\x-\c\rVert\geq -1/\kappa\}$ if $\kappa<0$;
  \item $\{\x\mid\langle\x,\hat\n\rangle\geq b\}\cup\{\infty\}$ if $\kappa=0$,
\end{itemize}
where $\lVert\cdot\rVert$ is the Euclidean norm, and $\langle\cdot,\cdot\rangle$ is the Euclidean inner product.  In the first two cases, the point $\c\in\mathbb{R}^d$ is called the \emph{center} of the ball.  In the last case, the unit vector $\hat\n$ is called the \emph{normal vector} of a half-space, and $b\in\mathbb{R}$. The boundary of a $d$-ball is a \emph{$(d-1)$-sphere}.  Two balls are tangent at a point $\t\in\hat{\mathbb{R}}^d$ if $\t$ is the only element of their intersection.  We call $\t$ the \emph{tangency point}, which can be the infinity point $\infty$ if it involves two balls of curvature $0$.  For a ball $S\subset\hat{\mathbb{R}}^d$, the \emph{curvature-center coordinates} is introduced by Lagarias, Mallows and Wilks in~\cite{lagarias2002} 
\[
\m(S)=
\begin{cases}
  (\kappa,\kappa\c)& \text{if } \kappa\neq 0;\\
  (0,\hat\n)& \text{if } \kappa=0.
\end{cases}
\]
Here, the term ``coordinate'' is an abuse of language, since the curvature-center coordinates do not uniquely determine a ball when $\kappa=0$.  A real global coordinate system would be the \emph{augmented curvature-center coordinates}, see~\cite{lagarias2002}.  However, the curvature-center coordinates are good enough for our use.
\begin{definition}\label{def:ballpacking}
A \emph{$d$-ball packing} is a collection of $d$-balls with disjoint interiors.
\end{definition}
For a ball packing $\mathcal{S}$, its \emph{tangency graph} $G(\mathcal{S})$ takes the balls as vertices and the tangency relations as the edges.  The tangency graph is invariant under M\"obius transformations and reflections.
\begin{definition}\label{def:ballpackable}
A graph $G$ is said to be \emph{$d$-ball packable} if there is a $d$-ball packing $\mathcal{S}$ whose tangency graph is isomorphic to $G$.  In this case, we say that $\mathcal{S}$ is a $d$-ball packing of $G$.
\end{definition}

\emph{Disk packing}, or $2$-ball packing, is well understood.
\begin{theorem}[Koebe--Andreev--Thurston theorem~\cites{koebe1936,thurston1979}]\label{thm:diskpack}
  Every connected simple planar graph is disk packable.  If the graph is a finite triangulated planar graph, then it has a unique disk packing up to M\"obius transformations.
\end{theorem}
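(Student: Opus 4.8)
The plan is to reduce the general statement to triangulations of the sphere and then establish both existence and rigidity for triangulations through the local geometry of three mutually tangent disks together with a global continuity argument.

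First I would reduce the existence claim to \emph{maximal} planar graphs. Given a connected simple planar graph $G$, fix a spherical embedding and triangulate each non-triangular face by inserting new vertices into the interior of the face and joining them, taking care never to add an edge directly between two vertices of $G$. This yields a triangulation $T$ of $S^2$ whose induced subgraph on $V(G)$ is exactly $G$. If $T$ is disk packable, then erasing the disks of the auxiliary vertices produces a packing whose tangency graph is $G$, since deleting disks only removes tangencies and two original disks are tangent in the packing of $T$ precisely when the corresponding vertices are adjacent in $T$, that is, in $G$. Hence it suffices to pack triangulations.

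For a triangulation $T$ I would assign one positive radius $r_v$ to each vertex and record each face locally: three pairwise externally tangent disks of radii $r_i,r_j,r_k$ span a Euclidean triangle with sides $r_i+r_j$, $r_i+r_k$, $r_j+r_k$, whose angle at the $i$th corner is
\[
\alpha_i(r_i,r_j,r_k)=\arccos\frac{(r_i+r_j)^2+(r_i+r_k)^2-(r_j+r_k)^2}{2(r_i+r_j)(r_i+r_k)}.
\]
The disks assemble into a packing precisely when the angles close up around every vertex, i.e.\ when the angle sum $\theta_v(r)$, taken over the faces incident to $v$, equals $2\pi$. The decisive local fact is the strict monotonicity $\partial\alpha_i/\partial r_i<0$ and $\partial\alpha_i/\partial r_j,\ \partial\alpha_i/\partial r_k>0$; summed over faces this makes the differential of the map $\Theta\colon(\log r_v)_v\mapsto(\theta_v)_v$ a symmetric matrix with the sign pattern of a weighted graph Laplacian, hence negative definite transverse to the line of global rescalings (which leaves every angle fixed).

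Existence would then follow by proving that $\Theta$ is a homeomorphism from the space of radii, modulo scaling, onto the admissible range of angle-sum vectors, and that the constant vector $(2\pi,\dots,2\pi)$ lies in that range once one face is chosen as the outer face (equivalently, after passing to the hyperbolic or spherical model so that a Gauss--Bonnet count is consistent). Rigidity is immediate from the same differential: negative definiteness transverse to scaling makes $\Theta$ injective, so in a fixed normalization the radii are unique, and the packing is therefore unique up to the Euclidean similarities of the plane, which under stereographic projection are exactly the M\"obius transformations of $S^2$. The routine part is the per-triangle trigonometry; the main obstacle is the global analysis: establishing properness of $\Theta$ by ruling out degenerate limits in which a block of radii collapses to $0$ or escapes to $\infty$, which requires exploiting the combinatorics ($3$-connectivity, absence of separating short cycles) of the triangulation. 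This non-degeneration estimate, essentially a discrete maximum principle, is where I expect the real work to lie.
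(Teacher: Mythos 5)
The paper offers no proof of this statement: Theorem~\ref{thm:diskpack} is quoted as the classical Koebe--Andreev--Thurston theorem, with the proof delegated to the cited sources, so there is nothing internal to compare your argument against. Judged on its own, your proposal correctly reproduces the skeleton of Thurston's variational proof (radii, per-face angles, monotonicity, the Laplacian-type differential of the angle-sum map $\Theta$), but as written it is not yet a proof. You defer exactly the step that carries the mathematical content of existence: the properness of $\Theta$, i.e.\ the exclusion of limits in which a proper subset of the radii degenerates to $0$ or $\infty$. Without that boundary estimate (a counting argument over the subcomplex spanned by the collapsing vertices, played off against Euler's formula), surjectivity onto the target vector $(2\pi,\dots,2\pi)$ is unproven and the existence claim is open. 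Relatedly, negative definiteness of $D\Theta$ transverse to rescaling gives only \emph{local} injectivity; to get global injectivity one needs either that $\Theta$ is the gradient of a strictly concave functional on a convex domain (Colin de Verdi\`ere's functional) or Thurston's maximum-principle comparison of two packings --- this is an additional argument, not a corollary of the sign pattern of the Jacobian.

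There is also an outright error in your uniqueness step: Euclidean similarities of the plane are \emph{not} all M\"obius transformations of $S^2$ under stereographic projection; they form only the stabilizer of the point at infinity, a proper subgroup of the full $6$-dimensional M\"obius group. Rigidity of the radii up to scaling therefore does not by itself yield uniqueness up to M\"obius. The correct deduction is: given two packings of the same triangulation on $S^2$, use the sharp $3$-transitivity of the M\"obius group to map the three circles of a distinguished face of the first packing onto the corresponding circles of the second (three pairwise tangent circles are determined by their three tangency points), pass to the plane with that face as the outer face, and only then invoke the Euclidean rigidity of the remaining radii. Finally, a small repair is needed in your reduction to triangulations: for a connected but not $2$-connected planar graph a face boundary walk may repeat vertices, so inserting an apex into such a face can create multiple edges; you must first augment to a $2$-connected plane graph, or subdivide more carefully, before the ``erase the auxiliary disks'' argument applies.
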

Few is known about the combinatorics of ball packings in higher dimensions.  Some attempts of generalizing the disk packing theorem to higher dimensions include~\cites{kuperberg1994,benjamini2010,cooper1996,miller1997}.  Clearly, an induced subgraph of a $d$-ball packable graph is also $d$-ball packable.  In other words, the class of ball packable graphs is closed under the induced subgraph operation.  

Throughout this paper, ball packings are always in dimension $d$. The dimensions of other objects will vary correspondingly.

\subsection{Descartes configurations}
A \emph{Descartes configuration} in dimension $d$ is a $d$-ball packing consisting of $d+2$ pairwise tangent balls.  The tangency graph of a Descartes configuration is the complete graph on $d+2$ vertices. This is the basic element for the construction of many ball packings in this paper.  The following relation was first established for dimension $2$ 
by Ren\'e Descartes in a letter~\cite{descartes1643} to Princess Elizabeth of Bohemia, then generalized to dimension $3$ by Soddy in the form of a poem~\cite{soddy1936a}, and finally generalized to arbitrary dimension by Gossett~\cite{gossett1937}.
\begin{theorem}[Descartes--Soddy--Gossett Theorem]
  In dimension $d$, if $d+2$ balls $S_1,\cdots,S_{d+2}$ form a Descartes configuration, let $\kappa_i$ be the curvature of $S_i$ \textup{(}$1\leq i\leq d+2$\textup{)}, then
  \begin{equation}\label{eq:soddy}
    \sum_{i=1}^{d+2}\kappa_i^2=\frac{1}{d}\Big(\sum_{i=1}^{d+2}\kappa_i\Big)^2
  \end{equation}
\end{theorem}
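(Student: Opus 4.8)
The plan is to follow Lagarias--Mallows--Wilks and encode tangency as a Gram matrix identity for a suitable Lorentzian form, then read off the Descartes relation from a single matrix entry. For a ball $S$ with curvature $\kappa\neq 0$ and center $\c$, introduce its \emph{co-curvature} $\bar\kappa:=\kappa\lVert\c\rVert^2-1/\kappa$ (the curvature of the image of $S$ under inversion in the unit sphere), and enlarge the curvature-center coordinates to the \emph{augmented} vector $\m(S)=(\bar\kappa,\kappa,\w)\in\mathbb{R}^{d+2}$ with $\w=\kappa\c$. The defining choice of $\bar\kappa$ gives the identity $\lVert\w\rVert^2-1=\kappa\bar\kappa$. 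Equip $\mathbb{R}^{d+2}$ with the \emph{Descartes form}
\[
  \langle\m(S),\m(S')\rangle_D=\langle\w,\w'\rangle-\tfrac12\bigl(\kappa\bar\kappa'+\kappa'\bar\kappa\bigr),
\]
whose matrix $Q_D$ has the $(\bar\kappa,\kappa)$-block $\bigl(\begin{smallmatrix}0&-1/2\\-1/2&0\end{smallmatrix}\bigr)$ and identity block on the $\w$-coordinates.

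The first step is to translate tangency into this language. For two balls that are tangent, the center distance satisfies $\lVert\c_i-\c_j\rVert=\lvert 1/\kappa_i+1/\kappa_j\rvert$ once signs of curvature record internal versus external tangency; squaring and multiplying by $\kappa_i^2\kappa_j^2$ yields $\lVert\kappa_j\w_i-\kappa_i\w_j\rVert^2=(\kappa_i+\kappa_j)^2$. Expanding and using $\lVert\w_i\rVert^2-1=\kappa_i\bar\kappa_i$ reduces this, after dividing by $\kappa_i\kappa_j$, to $\langle\m(S_i),\m(S_j)\rangle_D=-1$; the diagonal identity $\langle\m(S_i),\m(S_i)\rangle_D=1$ holds by the same co-curvature relation. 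Thus if $\mathbf W$ is the $(d+2)\times(d+2)$ matrix whose $i$-th row is $\m(S_i)$, the pairwise tangencies of the Descartes configuration are exactly the statement
\[
  \mathbf W\,Q_D\,\mathbf W^{\top}=2I-J,
\]
where $I$ is the identity and $J$ the all-ones matrix of size $d+2$.

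The second step extracts the scalar relation. The eigenvalues of $2I-J$ are $-d$ (once) and $2$ (with multiplicity $d+1$), so its determinant is $-d\,2^{d+1}\neq 0$; hence $\mathbf W$ is invertible, and one may invert the displayed identity to get $Q_D^{-1}=\mathbf W^{\top}(2I-J)^{-1}\mathbf W$. A direct computation gives $(2I-J)^{-1}=\tfrac12 I-\tfrac1{2d}J$. Let $\e_2$ be the standard basis vector picking out the $\kappa$-coordinate, so that $\mathbf W\e_2=\boldsymbol\kappa:=(\kappa_1,\dots,\kappa_{d+2})^{\top}$ is the curvature vector. Evaluating,
\[
  \bigl(Q_D^{-1}\bigr)_{22}=\e_2^{\top}Q_D^{-1}\e_2=\boldsymbol\kappa^{\top}\Bigl(\tfrac12 I-\tfrac1{2d}J\Bigr)\boldsymbol\kappa=\tfrac12\sum_{i}\kappa_i^2-\tfrac1{2d}\Bigl(\sum_i\kappa_i\Bigr)^2.
\]
On the other hand the block form of $Q_D$ shows $Q_D^{-1}$ has vanishing $(\kappa,\kappa)$-entry, so the right-hand side is $0$, which is precisely~\eqref{eq:soddy}.

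The main obstacle is not the core computation, which is clean, but the degenerate cases and sign bookkeeping: one must check that the tangency equation $\lVert\c_i-\c_j\rVert=\lvert 1/\kappa_i+1/\kappa_j\rvert$ holds uniformly for external and internal tangency under the signed-curvature convention, and that the limiting coordinates $\m(S)=(\bar\kappa,0,\hat\n)$ for a half-space (curvature $0$) still satisfy $\langle\m(S_i),\m(S_j)\rangle_D=-1$ at every tangency involving it. These can be handled either by a direct verification of each case or by a continuity argument, perturbing a configuration with a $\kappa=0$ ball to one with all curvatures nonzero and finite. Once the Gram identity is established in full generality, invertibility of $\mathbf W$ follows automatically from that of $2I-J$, and the argument above applies verbatim.
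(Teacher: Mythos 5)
The paper does not prove this theorem at all: it states it as a classical result (Descartes, Soddy, Gossett) and points to Lagarias--Mallows--Wilks \cite{lagarias2002} for the generalized version \eqref{eq:lagarias}, so there is no in-paper proof to compare against. Your argument is essentially the proof from \cite{lagarias2002}, and it is correct. The key computations all check out: squaring the tangency equation and multiplying by $\kappa_i^2\kappa_j^2$ does give $\lVert\kappa_j\w_i-\kappa_i\w_j\rVert^2=(\kappa_i+\kappa_j)^2$, which together with $\lVert\w_i\rVert^2-1=\kappa_i\bar\kappa_i$ yields the Gram identity $\mathbf{W}Q_D\mathbf{W}^\intercal=2I-J$; the eigenvalues of $2I-J$ are $-d$ and $2$ so $\mathbf{W}$ is invertible; $(2I-J)^{-1}=\tfrac12 I-\tfrac1{2d}J$; and the $(\kappa,\kappa)$-entry of $Q_D^{-1}$ vanishes by the block structure, giving \eqref{eq:soddy}. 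It is worth noting that since $(2I-J)^{-1}=\tfrac12(\I-\tfrac1d\e\e^\intercal)=\tfrac12\mathbf{Q}_d$, your identity $Q_D^{-1}=\mathbf{W}^\intercal(2I-J)^{-1}\mathbf{W}$ is exactly $\mathbf{W}^\intercal\mathbf{Q}_d\mathbf{W}=2Q_D^{-1}$, whose restriction to the non-augmented coordinates is precisely the paper's equation \eqref{eq:lagarias}; so your proof delivers the generalized theorem for free, not just the scalar relation. The loose ends you flag are real but routine: the squared form of the tangency equation holds uniformly for internal and external tangency under the signed-curvature convention, so no case analysis is actually needed there; and for a half-space $\{\x\mid\langle\x,\hat\n\rangle\geq b\}$ one takes $\bar\kappa=2b$ (so that $\lVert\w\rVert^2-\kappa\bar\kappa=1$ still holds), after which the inner product with any tangent ball again evaluates to $-1$. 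Alternatively the continuity argument you sketch works, since the identity \eqref{eq:soddy} involves only the curvatures, which vary continuously as one perturbs a configuration containing half-spaces into one with all curvatures nonzero.
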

Equivalently, we have $\mathbf{K}^\intercal\mathbf{Q}_d\mathbf{K}=0$,
where $\mathbf{K}=(\kappa_1,\cdots,\kappa_{d+2})^\intercal$ is the vector of curvatures, and $\mathbf{Q}_d:=\I-\frac{1}{d}\e\e^\intercal$ is a square matrix of size $d+2$, where $\e$ is the all-one column vector, and $\I$ is the identity matrix.  A more generalized relation on the curvature-center coordinates was proved in~\cite{lagarias2002}:
\begin{theorem}[Generalized Descartes--Soddy--Gossett Theorem]
  In dimension $d$, if $d+2$ balls $S_1,\cdots,S_{d+2}$ form a Descartes configuration, then
  \begin{equation}\label{eq:lagarias}
    \M^\intercal\mathbf{Q}_d\M=\begin{pmatrix} 0 & 0\\0 & 2\I \end{pmatrix}
  \end{equation}
  where $\M$ is the \emph{curvature-center matrix} of the configuration, whose $i$-th row is~$m(S_i)$.
\end{theorem}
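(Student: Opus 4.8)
The plan is to pass to the Lorentzian (conformal) model of oriented spheres, where the statement collapses to a short Gram-matrix computation. I represent each oriented $d$-sphere $S_i$ by a spacelike unit vector $\v_i\in\mathbb{R}^{d+2}$ for the Lorentz form $\mathbf{L}=\operatorname{diag}(1,-1,1,\dots,1)$, via
\[
\v_i=\Big(\tfrac{\kappa_i-\bar\kappa_i}{2},\ \tfrac{\kappa_i+\bar\kappa_i}{2},\ \w_i\Big),
\]
where $\w_i=\kappa_i\c_i$ and $\bar\kappa_i=\kappa_i\lVert\c_i\rVert^2-1/\kappa_i$ is the co-curvature. A direct check gives $\v_i^\intercal\mathbf{L}\v_i=1$, and more generally $\v_i^\intercal\mathbf{L}\v_j=\tfrac{1}{2r_ir_j}(r_i^2+r_j^2-\lVert\c_i-\c_j\rVert^2)$, the inversive distance, which equals $-1$ exactly when $S_i$ and $S_j$ are tangent. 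The crucial point is that the curvature-center coordinates are a \emph{linear} image of $\v_i$: if $\V$ denotes the $(d+2)\times(d+2)$ matrix with rows $\v_i$, then $\M=\V\A$ with
\[
\A=\begin{pmatrix}1 & \mathbf{0}^\intercal\\ 1 & \mathbf{0}^\intercal\\ \mathbf{0} & \I\end{pmatrix},
\]
since $\kappa_i$ is the sum of the first two coordinates of $\v_i$ and $\w_i$ is the remaining block.

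For a Descartes configuration all $d+2$ balls are mutually tangent, so the Lorentzian Gram matrix is $\V\mathbf{L}\V^\intercal=2\I-\e\e^\intercal$. This matrix is nonsingular (its eigenvalues are $2$ and $-d$), so $\det\V\neq0$ automatically, and a one-line Sherman--Morrison computation gives $(2\I-\e\e^\intercal)^{-1}=\tfrac12\mathbf{Q}_d$. Inverting the identity $\V\mathbf{L}\V^\intercal=2\I-\e\e^\intercal$ and using $\mathbf{L}^{-1}=\mathbf{L}$ then yields the clean relation $\V^\intercal\mathbf{Q}_d\V=2\mathbf{L}$. Substituting $\M=\V\A$ reduces the theorem to the purely formal computation
\[
\M^\intercal\mathbf{Q}_d\M=\A^\intercal(\V^\intercal\mathbf{Q}_d\V)\A=2\,\A^\intercal\mathbf{L}\A=\begin{pmatrix}0 & \mathbf{0}^\intercal\\ \mathbf{0} & 2\I\end{pmatrix},
\]
where the vanishing top-left entry is just $1-1=0$, re-deriving \eqref{eq:soddy}, and the lower block is $\I\cdot\I$.

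The main difficulty lies not in the algebra but in setting up the model honestly. First, I must fix orientations and use \emph{signed} curvatures so that every tangent pair has inner product exactly $-1$; the delicate case is the ball of negative curvature enclosing the others (as in an Apollonian packing), and getting its orientation right is precisely what forces the Gram matrix to be $2\I-\e\e^\intercal$ rather than a sign-corrupted variant. Second, I should verify the degenerate cases inside the model: a half-space ($\kappa_i=0$) corresponds to the spacelike vector with $a_i+b_i=0$ and $\w_i=\hat\n$, so both the linear map $\A$ and the entire computation extend verbatim to the points at infinity, which is exactly the advantage of this approach over manipulating radii directly.

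Finally, I note that this identity is the restriction of the \emph{augmented} Euclidean Descartes equation of Lagarias--Mallows--Wilks obtained by deleting the co-curvature coordinate, so an alternative to carrying out the computation above is simply to cite~\cite{lagarias2002} and extract the lower $(d+1)\times(d+1)$ block of their $(d+2)\times(d+2)$ relation.
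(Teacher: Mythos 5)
Your argument is correct. Note that the paper supplies no proof of this statement at all --- it is quoted from Lagarias, Mallows and Wilks --- so there is nothing in-paper to compare against; what you have written is a legitimate self-contained derivation, essentially the one obtained by deleting the co-curvature column from the augmented Euclidean Descartes equation of \cite{lagarias2002}, as you observe in your last paragraph. The chain of identities checks out: with your normalization one gets $\v_i^\intercal\mathbf{L}\v_i=1$ and $\v_i^\intercal\mathbf{L}\v_j=-1$ for every tangent pair (including an internally tangent pair involving a ball of negative curvature, provided signed radii $r_i=1/\kappa_i$ are used throughout, as you stress), hence $\V\mathbf{L}\V^\intercal=2\I-\e\e^\intercal$; this matrix has eigenvalues $2$ and $-d$, so $\V$ is invertible and Sherman--Morrison gives $(2\I-\e\e^\intercal)^{-1}=\tfrac12\mathbf{Q}_d$, whence $\V^\intercal\mathbf{Q}_d\V=2\mathbf{L}$; conjugating by $\A$ then produces exactly the claimed block matrix, with the $(1,1)$ entry $1-1=0$ recovering \eqref{eq:soddy}. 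Two small points deserve care in a full write-up. First, your formula $\bar\kappa_i=\kappa_i\lVert\c_i\rVert^2-1/\kappa_i$ is meaningless when $\kappa_i=0$; for a half-space $\{\x\mid\langle\x,\hat\n\rangle\geq b\}$ one must define the co-curvature directly (as $\pm 2b$ with the appropriate sign convention) and then verify $\v^\intercal\mathbf{L}\v=1$ and the tangency value $-1$ separately --- a short computation shows $\v_{\mathrm{hs}}^\intercal\mathbf{L}\v_{\mathrm{ball}}=\kappa(\langle\hat\n,\c\rangle-b)$, which equals $-1$ precisely when the ball is tangent to the hyperplane from the exterior side, so the claim does extend, but you only gesture at this rather than carry it out. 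Second, the orientation bookkeeping you flag as the main difficulty is genuinely where the hypothesis of disjoint interiors enters; a uniform statement is that $\v_i^\intercal\mathbf{L}\v_j=-1$ for balls with disjoint interiors meeting at one point under the paper's sign conventions for curvature, and it is worth recording that single lemma rather than treating the negative-curvature ball as an exceptional case.
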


Given a Descartes configuration $S_1,\cdots,S_{d+2}$, we can construct another Descartes configuration by replacing $S_1$ with an $S_{d+3}$, such that the curvatures $\kappa_1$ and $\kappa_{d+3}$ are the two roots of \eqref{eq:soddy} treating~$\kappa_1$ as unknown.  So we have the relation
\begin{equation}\label{eq:curvature}
  \kappa_1+\kappa_{d+3}=\frac{2}{d-1}\sum_{i=2}^{d+2}\kappa_i
\end{equation}
We see from \eqref{eq:lagarias} that
the same relation holds for all the entries in the curvature-center coordinates,
\begin{equation}\label{eq:curvcenter}
  \m(S_1)+\m(S_{d+3})=\frac{2}{d-1}\sum_{i=2}^{d+2}\m(S_i)
\end{equation}
These equations are essential for the calculations in the paper.

By recursively replacing $S_i$ with a new ball $S_{i+d+2}$ in this way,
we obtain an infinite sequence of balls $S_1,S_2,\cdots$,
in which any $d+2$ consecutive balls form a Descartes configuration.
This is \emph{Coxeter's loxodromic sequences} of tangent balls~\cite{coxeter1968}.

\subsection{Apollonian cluster of balls}\label{sse:apollonian}
\begin{definition}
A collection of $d$-balls is said to be \emph{Apollonian} if it can be built from a Descartes configuration by repeatedly introducing, for $d+1$ pairwise tangent balls, a new ball that is tangent to all of them. 
\end{definition}
Please note that a newly added ball is allowed to touch more than $d+1$ balls, and may intersect some other balls. In the latter case, the result is not a packing.  For example, Coxeter's loxodromic sequence is Apollonian.  In this paper, we are interested in (finite) \emph{Apollonian ball packings}.

We reformulate the replacing operation described before \eqref{eq:curvature} by inversions.  Given a Descartes configuration $\mathcal{S}=\{S_1,\cdots,S_{d+2}\}$, let $R_i$ be the inversion in the sphere that orthogonally intersects the boundary of $S_j$ for all $1\leq j\neq i\leq d+2$, then $R_i\mathcal{S}$ forms a new Descartes configuration, which keeps every ball of $\mathcal{S}$, except that $S_i$ is replaced by $R_iS_i$.  With this point of view, a Coxeter's sequence can be obtained from an initial Descartes configuration $\mathcal{S}_0$ by recursively constructing a sequence of Descartes configurations by $\mathcal{S}_{n+1}=R_{j+1}\mathcal{S}_n$ where $j\equiv n\pmod{d+2}$, then taking the union.

The group $W$ generated by $\{R_1,\dots,R_{d+2}\}$ is called the \emph{Apollonian group}.  The union of the orbits $\cup_{S\in\mathcal{S}_0}WS$ is called the \emph{Apollonian cluster} (of balls)~\cite{graham2006}.  The Apollonian cluster is an infinite ball packing in dimensions two~\cite{graham2005} and three~\cite{boyd1973}. That is, the interiors of any two balls in the cluster are either identical or disjoint.  This is unfortunately not true for higher dimensions.  Our main object of study, Apollonian ball packings, can be seen as special subsets of the Apollonian cluster.
% However, if we do not require $\mathcal{S}_0$ to be a Descartes configuration, i.e.~if $S_1,\cdots,S_{d+2}$ are not forced to be pairwise tangent, then a similar generating method yields a non-Apollonian infinite ball packing under certain conditions~\cite{boyd1973}.  Maxwell~\cite{maxwell1982} related this fact to hyperbolic reflection groups, and showed that this generating method works only up to dimension nine.

Define
\[
\R_i:=\I+\frac{2}{d-1}\e_i\e^\intercal-\frac{2d}{d-1}\e_i\e_i^\intercal
\]
where $\e_i$ is a $(d+2)$-vector whose entries are $0$ except for the $i$-th entry being~$1$.  So~$\R_i$ coincide with the identity matrix at all rows except for the $i$-th row, whose diagonal entry is $-1$ and the off-diagonal entries are $2/(d-1)$.  One then verifies that~$\R_i$ induces a representation of the Apollonian group.  In fact, if $\M$ is the curvature-center matrix of a Descartes configuration $\mathcal{S}$, then $\R_i\M$ is the curvature-center matrix of $R_i\mathcal{S}$.

\subsection{Stacked polytopes}\label{sse:stacked}

For a simplicial polytope, a \emph{stacking operation} glues a new simplex onto a facet.

\begin{definition}
  A simplicial $d$-polytope is \emph{stacked} if it can be iteratively constructed from a $d$-simplex by a sequence of \emph{stacking operations}.
\end{definition}

We call the $1$-skeleton of a polytope $\mathcal{P}$ the \emph{graph} of $\mathcal{P}$, denoted by $G(\mathcal{P})$.  For example, the graph of a $d$-simplex is the complete graph on $d+1$ vertices.  The graph of a stacked $d$-polytope is a \emph{$d$-tree}, that is, a chordal graph whose maximal cliques are of a same size $d+1$.  Inversely, 
\begin{theorem}[Kleinschmidt~\cite{kleinschmidt1976}]\label{thm:ktree}
  A $d$-tree is the graph of a stacked $d$-polytope if and only if there is no three $(d+1)$-cliques sharing $d$ vertices.
\end{theorem}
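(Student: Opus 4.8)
The plan is to prove the two directions separately, treating the converse by induction on the number of vertices.

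For the forward direction (``only if''), suppose the $d$-tree $G$ is the graph of a stacked $d$-polytope $\mathcal{P}$. The maximal cliques of $G$ all have size $d+1$ and correspond exactly to the $d$-simplices (cells) of the stacking triangulation of $\mathcal{P}$, while a $d$-clique of $G$ is a $(d-1)$-simplex serving as a facet shared between cells. Since each such facet lies in at most two cells --- two if it is interior to $\mathcal{P}$, one if it lies on the boundary --- no $d$-clique can be contained in three $(d+1)$-cliques. This direction is essentially immediate once the clique/cell dictionary is set up.

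For the converse (``if''), I would argue by induction on $n=\lvert V(G)\rvert$, proving the slightly stronger statement that $G$ admits a realization as a stacked polytope $\mathcal{P}$ in which the boundary facets of $\mathcal{P}$ are \emph{exactly} the $d$-cliques of $G$ contained in a unique $(d+1)$-clique. The base case $n=d+1$ is the $d$-simplex, for which the statement is clear. For the inductive step the combinatorial input is to locate a suitable vertex to remove: using the fact that the maximal cliques of a $d$-tree organize into a tree, and that the no-three condition forces adjacent cells to share their common facet pairwise, a leaf of this clique tree yields a degree-$d$ vertex $v$ whose neighbourhood $\sigma$ is a $d$-clique contained in exactly two $(d+1)$-cliques. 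Deleting $v$ gives a smaller $d$-tree $G-v$ which still satisfies the no-three hypothesis, so by induction it has a realization $\mathcal{P}'$ obeying the invariant; since $\sigma$ has dropped to lying in a single $(d+1)$-clique of $G-v$, the invariant guarantees that $\sigma$ is a boundary facet of $\mathcal{P}'$.

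It then remains to perform the stacking geometrically: place the new vertex $v$ just beyond the facet $\sigma$ (beyond $\sigma$ but beneath every other facet of $\mathcal{P}'$) and take the convex hull. By the standard beyond/beneath analysis this produces a simplicial stacked polytope whose graph is precisely $G$, turning $\sigma$ into an interior facet and creating $d$ new boundary facets, each a $d$-clique containing $v$ that lies in a unique new cell --- exactly as the strengthened invariant demands. The main obstacle, and the place where the no-three condition is indispensable, is the bookkeeping in the inductive step: one must guarantee that the facet chosen for stacking is still on the boundary of the current polytope. Without the hypothesis a $d$-clique could already sit in two cells, i.e.\ be an interior facet, onto which no stacking is possible; the strengthened invariant is precisely what propagates the boundary/interior information through the induction.
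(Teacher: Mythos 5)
The paper does not prove this statement at all---it is quoted from Kleinschmidt with a citation---so there is no in-paper argument to compare against; your proof stands on its own and it is correct. The forward direction is the right dictionary ($(d+1)$-cliques are the cells of the canonical triangulation, $d$-cliques are their $(d-1)$-faces, and a $(d-1)$-face of a triangulated $d$-ball lies in at most two cells), and the induction for the converse correctly isolates where the hypothesis is needed: a simplicial vertex $v$ of degree $d$ has its neighbourhood $\sigma$ in exactly two maximal cliques (at least two because $v$ was attached to an existing cell, at most two by the no-three condition), so after deleting $v$ the clique $\sigma$ lies in a unique cell and your invariant certifies it as a boundary facet of $\mathcal{P}'$, onto which the beyond/beneath stacking can be performed. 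One small remark: the ``strengthened invariant'' you carry is in fact automatic for any stacked polytope with its canonical triangulation (a $(d-1)$-face is a boundary facet iff it lies in exactly one cell, and cells are exactly the maximal cliques), so you could state it as a lemma rather than an induction hypothesis; but carrying it through the induction is harmless and makes the bookkeeping explicit.
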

A $d$-tree satisfying this condition will be called \emph{stacked $d$-polytopal graph}. 

A simplicial $d$-polytope $\mathcal{P}$ is stacked if and only if it admits a triangulation~$\mathcal{T}$ with only interior faces of dimension $(d-1)$. For $d\geq 3$, this triangulation is unique, whose simplices correspond to the maximal cliques of $G(\mathcal{P})$.  This implies that stacked polytopes are uniquely determined by their graph (i.e.~stacked polytopes with isomorphic graphs are combinatorially equivalent).  The \emph{dual tree}~\cite{gonska2011} of $\mathcal{P}$ takes the simplices of $\mathcal{T}$ as vertices, and connect two vertices if the corresponding simplices share a $(d-1)$-face.

The following correspondence between Apollonian $2$-ball packings and stacked $3$-polytopes can be easily seen from Theorem~\ref{thm:diskpack} by comparing the construction processes:
\begin{theorem}
If a disk packing is Apollonian, then its tangency graph is stacked $3$-polytopal.  If a graph is stacked $3$-polytopal, then it is disk packable, and its disk packing is Apollonian and unique up to M\"obius transformations. 
\end{theorem}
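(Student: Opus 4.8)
The plan is to exploit the evident parallel between the two recursive constructions. The dictionary is this: in dimension $2$, a Descartes configuration consists of four pairwise tangent disks with tangency graph $K_4$, which is exactly the graph of a $3$-simplex; and the Apollonian move --- inserting a disk tangent to three pairwise tangent disks --- mirrors the stacking move --- gluing a tetrahedron onto a triangular facet --- in that both enlarge the graph by attaching a single new vertex to a $3$-clique. I would prove each implication by induction following these parallel constructions, invoking the Koebe--Andreev--Thurston theorem (Theorem~\ref{thm:diskpack}) for packability and uniqueness and Kleinschmidt's criterion (Theorem~\ref{thm:ktree}) for stacked polytopality.

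For the first implication, let $\mathcal S$ be an Apollonian disk packing. I would argue by induction on the number of disks that $G(\mathcal S)$ is a $3$-tree. The base case is the initial Descartes configuration, whose graph is $K_4$. At each Apollonian step a new disk is dropped into the curvilinear interstice bounded by three pairwise tangent disks; since $\mathcal S$ is a genuine packing (the Apollonian cluster is a packing in dimension $2$), this disk lies in the open interstice and is therefore tangent to exactly those three disks and to no other. Hence the step attaches a vertex of degree $3$ to a $3$-clique, so $G(\mathcal S)$ is a $3$-tree. It remains to verify Kleinschmidt's condition. Given any $3$-clique $\{A,B,C\}$ of pairwise tangent disks, treating \eqref{eq:soddy} as a quadratic in the curvature of a fourth tangent disk yields exactly two roots, and \eqref{eq:curvcenter} then fixes the corresponding centres; thus at most two disks are tangent to all of $A,B,C$. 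Consequently no $3$-clique is contained in three distinct $4$-cliques, and Theorem~\ref{thm:ktree} shows $G(\mathcal S)$ is stacked $3$-polytopal.

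For the converse, let $G$ be a stacked $3$-polytopal graph. As the graph of a simplicial $3$-polytope it is a triangulation of the sphere, hence a finite triangulated planar graph, so Theorem~\ref{thm:diskpack} gives a disk packing of $G$, unique up to M\"obius transformations. To see this packing is Apollonian, I would build one Apollonian packing realizing $G$ by running the Apollonian construction along a stacking sequence of the polytope: start from a Descartes configuration for the initial $K_4$, and for each stacking of a tetrahedron onto a facet, fill the interstice bounded by the three disks corresponding to that facet. The inductive invariant to maintain is that the boundary facets of the partial polytope are in bijection with the unfilled interstices of the partial packing, so that every stacking has a well-defined interstice to fill. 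As above, each inserted disk is tangent to exactly its three bounding disks, so the resulting packing has tangency graph precisely $G$. By the uniqueness clause of Theorem~\ref{thm:diskpack}, the disk packing of $G$ is M\"obius-equivalent to this one, hence Apollonian and unique.

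The main obstacle lies in the converse direction: verifying that the packing assembled along the stacking sequence has tangency graph exactly $G$, neither more nor less. This requires both the bookkeeping that matches each newly stacked facet with a still-empty interstice (so that the Apollonian moves are legal and exhaust the right holes) and the fact that in dimension $2$ the filling disks never overlap or acquire spurious tangencies --- which is exactly where the dimension-$2$ packing property of the Apollonian cluster is used. The first implication is comparatively routine, being the combination of the $3$-tree bookkeeping with the two-root count coming from \eqref{eq:soddy}.
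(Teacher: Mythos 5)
Your proposal is correct and follows exactly the route the paper intends: the paper gives no written proof beyond the remark that the correspondence ``can be easily seen from Theorem~\ref{thm:diskpack} by comparing the construction processes,'' and your argument is precisely that comparison, fleshed out with the interstice bookkeeping, the two-root count from \eqref{eq:soddy} for Kleinschmidt's condition, and the uniqueness clause of the Koebe--Andreev--Thurston theorem to transfer Apollonianity to the (essentially unique) packing of $G$.
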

% The first part of this theorem will be generalized to higher dimensions as Theorem~\ref{thm:not3d}. 

The relation between $3$-tree, stacked $3$-polytope and Apollonian $2$-ball packing can be illustrated as follows:
  \begin{center}
    \begin{tikzpicture}[node distance=3cm]
      \node (tree) {$3$-tree};
      \node (polytope) [below left=of tree, xshift=1cm] {stacked $3$-polytope};
      \node (packing) [below right=of tree, xshift=-1cm] {Apollonian $2$-ball packing};
      \draw [->>] (tree) to node [sloped,above] {no three $4$-cliques} node [sloped,below] {sharing a $3$-clique} (polytope);
      \draw [->>] (tree) to node [sloped,above] {no three $4$-cliques} node [sloped,below] {sharing a $3$-clique} (packing);
      \draw [<->] (polytope) to (packing);
    \end{tikzpicture}
  \end{center}
where the double-headed arrow $A\twoheadrightarrow B$ means that every instance of $B$ corresponds to an instance of $A$ satisfying the given condition.

\section{Ball-packability of graph joins}\label{sec:join}
\subsection*{Notations}

We use $G_n$ to denote any graph on $n$ vertices, and use
\begin{description}
  \item[$P_n$] for the path on $n$ vertices (therefore of length $n-1$); 
  \item[$C_n$] for the cycle on $n$ vertices; 
  \item[$K_n$] for the complete graph on $n$ vertices;
  \item[$\bar K_n$] for the empty graph on $n$ vertices;
  \item[$\lozenge_d$] for the $1$-skeleton of the $d$-dimensional orthoplex\footnote{also called ``cross polytope''};
\end{description}

The \emph{join} of two graphs $G$ and $H$, denoted by $G\star H$, is the graph obtained by connecting every vertex of $G$ to every vertex of $H$.  Most of the graphs in this section will be expressed in term of graph joins.  Notably, we have $\lozenge_d=\underbrace{\bar K_2\star\dots\star\bar K_2}_d$.  

\subsection{Graphs in the form of $K_d\star P_m$}
The following theorem reformulates a result of Wilker~\cite{wilker1972}.  A proof was sketched in~\cite{boyd1973}.  Here we present a very elementary proof, suitable for our further generalization.

\begin{theorem}\label{thm:pk}
  Let $d\geq 2$ and $m\geq 0$. A graph in the form of
  \begin{enumerate}
      \renewcommand{\theenumi}{\rm (\roman{enumi})}
      \renewcommand{\labelenumi}{\theenumi}
    \item \label{forb:d2} $K_2\star P_m$ is $2$-ball packable for any $m$;
    \item \label{forb:m4} $K_d\star P_m$ is $d$-ball packable if $m\leq 4$;
    \item \label{forb:m6} $K_d\star P_m$ is \emph{not} $d$-ball packable if $m\geq 6$;
    \item \label{forb:m5} $K_d\star P_5$ is $d$-ball packable if and only if $2\leq d\leq 4$;
  \end{enumerate}
\end{theorem}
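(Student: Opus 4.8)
The plan is to collapse all four cases to one transparent picture---congruent unit balls whose centers lie on a circle---by a single M\"obius normalization, and then read off each answer from elementary chord-length geometry. Since the tangency graph is a M\"obius invariant and any two tangent balls can be sent to two parallel hyperplanes by inverting at their tangency point, I would normalize a packing realizing $K_d\star P_m$ so that two of the $d$ mutually tangent balls of the $K_d$-part become parallel hyperplanes at distance $2$. Every ball tangent to both of these is then a unit ball centered on the midplane $M\cong\mathbb{R}^{d-1}$. Thus the remaining $d-2$ balls of the $K_d$-part become unit balls whose centers $v_3,\dots,v_d$ are pairwise at distance $2$ (a regular simplex of edge $2$), and each path ball becomes a unit ball whose center $q_i$ is at distance $2$ from every $v_j$. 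A short computation shows this locus is a circle of radius $\rho=\sqrt{2(d-1)/(d-2)}$ in the $2$-plane through the simplex circumcenter orthogonal to the simplex. Tangency of consecutive path balls forces $\lVert q_i-q_{i+1}\rVert=2$, i.e.\ an angular step $\psi$ with $\cos\psi=1/(d-1)$; and two distinct unit balls centered on $M$ have disjoint interiors exactly when their centers are more than $2$ apart. (The same $\psi$ reappears in the recurrence \eqref{eq:curvcenter} for the induced Coxeter sequence, whose homogeneous part has characteristic roots $e^{\pm i\psi}$ with $\cos\psi=1/(d-1)$; this is what makes the proof generalize.)

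Because two distinct consecutive centers cannot backtrack without coinciding, the positions are forced to be $(i-1)\psi$ up to rotation and reflection, so the chord between $q_i$ and $q_j$ is $2\rho\,\lvert\sin(k\psi/2)\rvert$ with $k=\lvert i-j\rvert$, and realizing \emph{precisely} $K_d\star P_m$ becomes the single condition $\lvert\sin(k\psi/2)\rvert>\sin(\psi/2)$ for all $2\le k\le m-1$ (recall $\rho\sin(\psi/2)=1$, so chord $=2$ is the tangency threshold). For $d=2$ there are no simplex balls and the two hyperplanes give $\psi=0$: the path balls form an infinite row of congruent balls in a slab, a packing for every $m$, which is~\ref{forb:d2}. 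For $d\ge3$ one has $\cos\psi=1/(d-1)\in(0,1/2]$, hence $\psi\in[\pi/3,\pi/2)$, and the cases $k=2,3$ always satisfy the strict inequality; since $m\le 4$ only involves $k\le 3$, this gives~\ref{forb:m4}.

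For~\ref{forb:m6} I would show the endpoint pair always collides at $m=6$: for $\psi\in[\pi/3,\pi/2)$ one checks $\lvert\sin(5\psi/2)\rvert\le\sin(\psi/2)$ (equality iff $\psi=\pi/3$, i.e.\ $d=3$), so $q_1$ and $q_6$ are tangent or overlapping. Either way $K_d\star P_6$ is not realizable, and as it is an induced subgraph of $K_d\star P_m$ for all $m\ge6$, non-packability propagates. For~\ref{forb:m5} the only remaining constraint at $m=5$ is the pair $(q_1,q_5)$, i.e.\ $k=4$: writing $c=\cos\psi=1/(d-1)$, the inequality $\sin 2\psi>\sin(\psi/2)$ reduces after squaring to $8c^2(1+c)>1$, which holds for $c\in\{1/2,1/3\}$ (that is $d=3,4$) and fails for $c\le 1/4$ (that is $d\ge 5$); with~\ref{forb:d2} this yields packability exactly for $2\le d\le4$.

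The main obstacle is making the reduction fully rigorous in the non-packability direction: I must argue that \emph{every} realization is forced into the circle configuration, so the collision is unavoidable. This means ruling out degenerate inversions---for instance a path ball passing through the chosen tangency point, which after inversion becomes a hyperplane rather than a unit ball---and confirming that no spurious coincidences (equal centers, or an unexpected tangency between a path ball and one of the hyperplanes) give extra freedom. Once the configuration is pinned down, the trigonometric inequalities $\lvert\sin(5\psi/2)\rvert\le\sin(\psi/2)$ and $8c^2(1+c)>1$ are elementary, but they are borderline precisely at the claimed dimensions, so I would verify their strict-versus-equality status carefully at $d=3$ and as $d\to\infty$.
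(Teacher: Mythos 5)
Your proposal is correct, and it takes a genuinely different route from the paper's. The paper normalizes so that two \emph{adjacent path vertices} become the parallel half-spaces $\mathsf{A},\mathsf{B}$; the $d$ balls of $K_d$ are then unit balls, the remaining path balls are stacked along the axis through their centers, and everything reduces to the one-dimensional check ``do the diameters, read off from the Descartes relation \eqref{eq:curvature}, fit in a slab of width $2$'' -- namely \eqref{eqn:5ball} and \eqref{eqn:6ball}. You instead send two balls of the $K_d$ part to half-spaces, which makes \emph{all} path balls congruent unit balls on a circle of radius $\rho=\sqrt{2(d-1)/(d-2)}$, and reduces everything to the single angle $\cos\psi=1/(d-1)$ and the chord inequalities $|\sin(k\psi/2)|>\sin(\psi/2)$. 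I checked your key inequalities: $\cos5\psi-\cos\psi=-2\sin3\psi\sin2\psi\ge0$ on $[\pi/3,\pi/2)$ with equality exactly at $\psi=\pi/3$ (so at $d=3$ the sixth ball is tangent to the first, closing up into Soddy's hexlet, $6\psi=2\pi$), and $\sin2\psi>\sin(\psi/2)\iff 8c^2(1+c)>1$, which holds for $c=1/2,1/3$ and fails for $c\le1/4$; the forcing of the positions to $(i-1)\psi$ and the exclusion of degenerate half-space images both go through (no third half-space can be tangent to two disjoint parallel half-spaces, so no other ball passes through the chosen tangency point). What your version buys: the hexlet and the rigidity of the circle configuration become transparent, and the argument immediately yields the paper's Corollary~\ref{cor:gk} (the $G_m$ part can only be a path, a cycle, or a disjoint union of paths on the circle). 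What the paper's version buys: a shorter computation with no trigonometry or degenerate cases, since the diameters come straight from \eqref{eq:curvature}. Your parenthetical about the characteristic roots $e^{\pm i\psi}$ of the recurrence \eqref{eq:curvcenter} is a nice observation but is not needed for the proof.
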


\begin{proof}
 ~\ref{forb:d2} is trivial.

  For dimension $d>2$, we construct a ball packing for the $(d+1)$-simplex $K_{d+2}=K_d\star P_2$ as follows. The two vertices of $P_2$ are represented by two disjoint half-spaces $\mathsf{A}$ and $\mathsf{B}$ at distance $2$ apart, and the $d$ vertices of $K_d$ are represented by $d$ pairwise tangent unit balls touching both $\mathsf{A}$ and $\mathsf{B}$.  Figure~\ref{pic:C6K3} shows the situation for $d=3$, where red balls represent vertices of $K_3$. This is the unique packing of $K_{d+2}$ up to M\"obius transformations.

  Let $\mathsf{S}$ be the $(d-2)$-sphere decided by the centers of the unit balls.  The idea of the proof is the following. Starting from $K_d\star P_2$, we construct the ball packing of $K_d\star P_m$ by appending new balls to the path, touching all the $d$ unit balls representing $K_d$. These new balls must center on a straight line passing through the center of $\mathsf{S}$ perpendicular to the hyperplane containing $\mathsf{S}$.  The construction fails when the sum of the diameters exceeds $2$.

  \begin{figure}[htb]
    \centering
    \includegraphics[width=\textwidth]{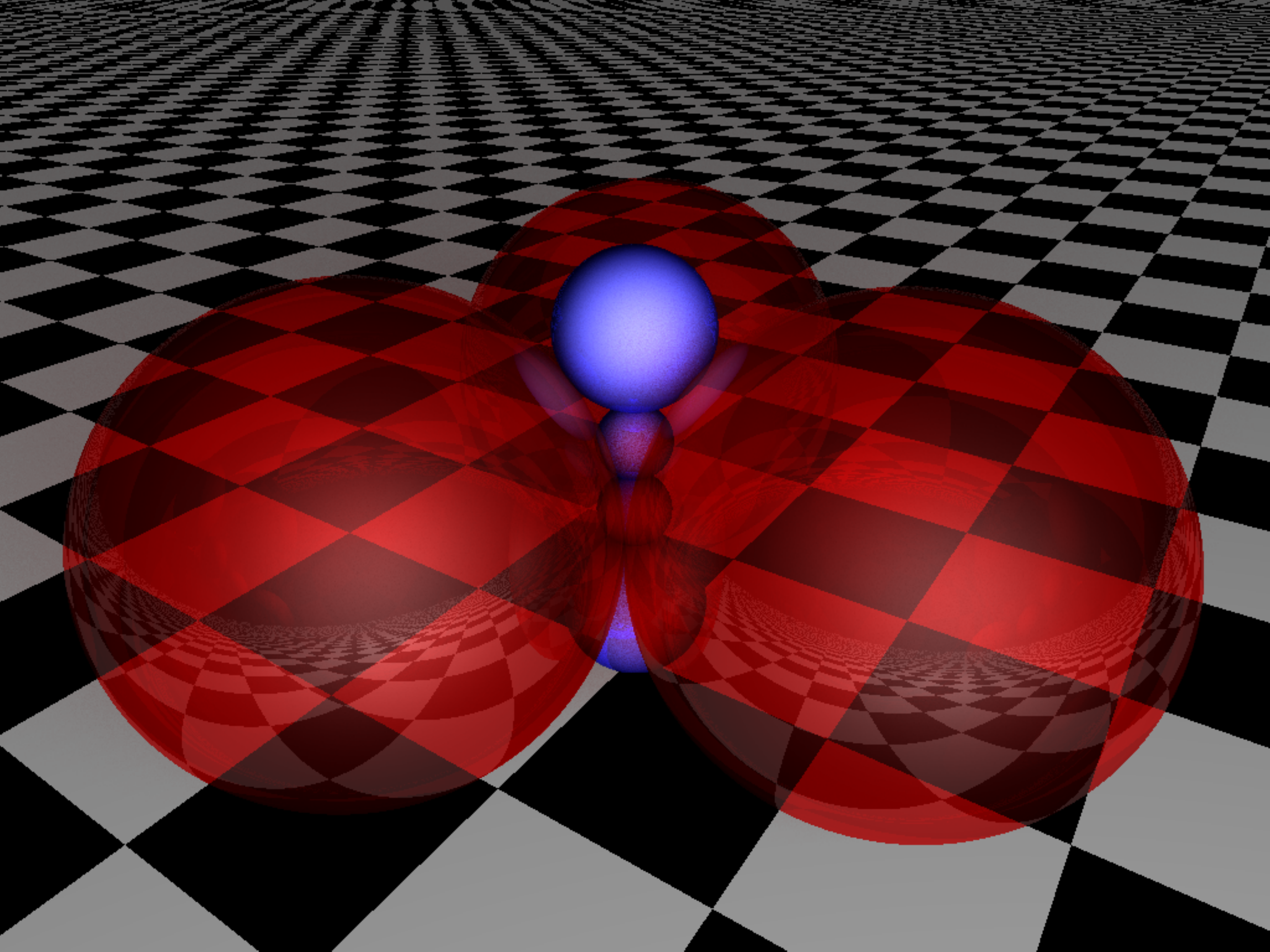}
    \caption{\label{pic:C6K3} An attempt of constructing the ball packing of $K_3\star P_6$ results in $K_3\star C_6$. Refering to the proof of Theorem~\ref{thm:pk}, the red balls correspond to vertices in $K_3$, the blue balls are labeled by $\mathsf{C}$, $\mathsf{E}$, $\mathsf{F}$, $\mathsf{D}$ from bottom to top. The upper half-space $\mathsf{B}$ is not shown. The image is rendered by POV-Ray.}
  \end{figure}

  As a first step, we construct $K_d\star P_3$ by adding a new ball $\mathsf{C}$ tangent to $\mathsf{A}$.  By \eqref{eq:curvature}, the diameter of $\mathsf{C}$ is $2/\kappa_\mathsf{C}=(d-1)/d<1$.  Since $\mathsf{C}$ is disjoint from $\mathsf{B}$, this step succeeded.  Then we add the ball $\mathsf{D}$ tangent to $\mathsf{B}$.  It has the same diameter as $\mathsf{C}$ by symmetry, and they sum up to $2(d-1)/d<2$.  So the construction of $K_d\star P_4$ succeeded, which proves~\ref{forb:m4}.

  We now add a ball $\mathsf{E}$ tangent to $\mathsf{C}$.  Still by \eqref{eq:curvature}, the diameter of $\mathsf{E}$ is \[\frac{2}{\kappa_\mathsf{E}}=\frac{(d-1)^2}{d(d+1)}\] If we sum up the diameters of $\mathsf{C}$, $\mathsf{D}$ and $\mathsf{E}$, we get
  \begin{equation}\label{eqn:5ball}
    2\frac{d-1}{d}+\frac{(d-1)^2}{d(d+1)}=\frac{3d^2-2d-1}{d(d+1)}
  \end{equation}
  which is smaller then $2$ if and only if $d\leq 4$.  Therefore the construction fails unless $2\leq d\leq 4$, which proves~\ref{forb:m5}.

  For $2\leq d\leq 4$, we continue to add a ball $\mathsf{F}$ tangent to $\mathsf{D}$.  It has the same diameter as $\mathsf{E}$.  If we sum up the diameters of $\mathsf{C}$, $\mathsf{D}$, $\mathsf{E}$ and $\mathsf{F}$, we get
  \begin{equation}\label{eqn:6ball}
    2\left(\frac{d-1}{d}+\frac{(d-1)^2}{d(d+1)}\right)=4\frac{d-1}{d+1}
  \end{equation}
  which is smaller then $2$ if and only if $d<3$, which proves~\ref{forb:m6}.
\end{proof}
\begin{remark}
  Figure~\ref{pic:C6K3} shows the attempt of constructing the ball packing of $K_3\star P_6$ but yields the ball packing of $K_3\star C_6$.  This packing is called \emph{Soddy's hexlet}~\cite{soddy1936b}.  It's an interesting configuration since the diameters of $\mathsf{C}$, $\mathsf{D}$, $\mathsf{E}$ and $\mathsf{F}$ sum up to exactly $2$.  This configuration is also studied by Maehara and Oshiro in~\cite{maehara2000}.
\end{remark}
\begin{remark}
  Let's point out the main differences between the situation in dimension $2$ and higher dimensions. For $d=2$, a Descartes configuration divides the space into $4$ disjoint regions, and the radius of a circle tangent to the two unit circles of $K_2$ can be arbitrarily small.  However, if $d>2$, the complement of a Descartes configuration is always connected, and the radius of a ball tangent to all the $d$ balls of $K_d$ is bounded away from $0$.  In fact, using the Descartes--Soddy--Gossett theorem, one verifies that the radius of such a ball is at least $\frac{d-2}{d+\sqrt{2d^2-2d}}$, which tends to $\frac{1}{1+\sqrt 2}$ as $d$ tends to infinity.
\end{remark}

\subsection{Graphs in the form of $K_n\star G_m$}

The following is a corollary of Theorem~\ref{thm:pk}.
\begin{corollary}\label{cor:gk}
  For $d=3$ or $4$, a graph in the form of $K_d\star G_6$ is not $d$-ball packable, with the exception of $K_3\star C_6$.  For $d\geq 5$, a graph in the form of $K_d\star G_5$ is not $d$-ball packable.
\end{corollary}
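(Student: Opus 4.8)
The plan is to read the corollary off the geometry underlying Theorem~\ref{thm:pk}, whose content is that the only freedom in a packing of $K_d\star G$ lies in placing the balls of $G$, each of which must be tangent to the same $K_d$. I would fix such a packing and normalise the $d$ mutually tangent balls representing $K_d$ to the symmetric configuration of that proof ($d$ unit balls whose centres span a regular simplex); this is legitimate, since tangency and packability are M\"obius invariant and any $d$ mutually tangent balls are M\"obius equivalent to this configuration. Every ball representing a vertex of $G$ is then tangent to all $d$ unit balls, hence by the symmetry argument in that proof either has its centre on the axis $\ell$ or is one of the two half-spaces $\mathsf A,\mathsf B$ whose boundaries are perpendicular to $\ell$; an equidistance argument shows no other half-space can be tangent to all $d$ unit balls. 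Since two balls centred on $\ell$ are interior-disjoint exactly when their diameter segments along $\ell$ are disjoint, everything reduces to a one-dimensional packing problem, and the corollary follows once the maximum number $N(d)$ of pairwise interior-disjoint balls tangent to the $K_d$ spine is known.

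To compute $N(d)$ I would note that at most two of these balls are half-spaces, so the remainder are genuine balls confined to the open slab of width $2$ bounded by $\mathsf A$ and $\mathsf B$, and that the densest arrangement is the consecutive tangent chain built in the proof of Theorem~\ref{thm:pk}, whose diameters are pinned down by \eqref{eqn:5ball} and \eqref{eqn:6ball}. By \eqref{eqn:6ball} four interior balls overflow the slab unless $d\le 3$, with a perfect tiling only at $d=3$, while by \eqref{eqn:5ball} three interior balls overflow once $d\ge 5$. This gives $N(3)=6$, attained only by Soddy's hexlet (the four interior balls tile the slab, forcing the chain to close into the $6$-cycle $\mathsf A\,\mathsf C\,\mathsf E\,\mathsf F\,\mathsf D\,\mathsf B$), together with $N(4)=5$ and $N(d)\le 4$ for $d\ge 5$.

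Granting this, the three assertions are immediate. A packing of $K_d\star G_6$ would need six pairwise disjoint balls tangent to $K_d$: impossible for $d=4$ since $N(4)=5$, while for $d=3$ the six balls must be Soddy's hexlet, whose tangency graph is exactly $C_6$, so the sole packable instance is $K_3\star C_6$. Likewise a packing of $K_d\star G_5$ would need five such balls, impossible for $d\ge 5$ since $N(d)\le 4$.

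The delicate point, and the step I expect to cost the most work, is the extremal claim that the consecutive tangent chain is the densest family of spine-tangent balls, so that the overflow recorded by \eqref{eqn:5ball} and \eqref{eqn:6ball} forbids \emph{every} disjoint arrangement rather than merely the symmetric one; the crude radius bound in the remark after Theorem~\ref{thm:pk} is not sharp enough, as for $d=4$ it still permits four interior balls. I would instead exploit the linear recurrence satisfied by the curvatures along the chain, a direct consequence of \eqref{eq:curvature}: its characteristic roots have unit modulus and rotation angle $\theta_d=\arccos\frac1{d-1}$, so consecutive tangent balls advance by the fixed angle $\theta_d$ in a conformal parametrisation of $\ell\cup\{\infty\}$, whence $N(d)=\lfloor 2\pi/\theta_d\rfloor$ with the chain closing (and $C_n$ being realised) precisely when $\theta_d=2\pi/n$, i.e.\ $d=3$ and $n=6$. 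Proving that interior-disjointness corresponds to angular separation at least $\theta_d$ is exactly what makes ``consecutive tangency is densest'' rigorous and completes the argument.
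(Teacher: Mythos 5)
Your proposal is correct, and its skeleton is the same as the paper's: normalize $K_d$ to $d$ mutually tangent unit balls, observe that every ball representing a vertex of $G$ must then be centred on the common axis (or be one of the two tangent half-spaces), and read the bound on the number of such pairwise disjoint balls off the overflow computations \eqref{eqn:5ball} and \eqref{eqn:6ball}, with the closed chain $C_6$ surviving only at $d=3$ as Soddy's hexlet. Where you diverge is the extremal step. The paper's proof of Corollary~\ref{cor:gk} dismisses non-consecutive arrangements with one sentence (``we are forced to leave gaps\dots there is no space for any gap''), implicitly using that the top endpoint of a spine-tangent ball is an increasing function of its bottom endpoint, so a greedy induction shows the $k$-th ball of any disjoint family ends no lower than the $k$-th ball of the tangent chain. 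You instead propose the angular parametrization with step $\theta_d=\arccos\frac{1}{d-1}$ and the count $N(d)=\lfloor 2\pi/\theta_d\rfloor$, giving $6,5,4$ for $d=3$, $d=4$, $d\geq 5$. This is sound, and in fact it is precisely the paper's own \emph{second} proof of this corollary: the remark after Corollary~\ref{cor:sphcod} records $k(d,d-1)=A(2,1/d)$ via the Bannai--Sloane correspondence, and a $(2,\frac{1}{d-1})$-spherical code is exactly a set of points on a circle with pairwise angular separation at least $\arccos\frac{1}{d-1}$, which is the statement you flag as the remaining work. So your plan is a correct hybrid of the paper's two arguments; the only caveat is that the reduction ``every ball tangent to all of $K_d$ is centred on the axis'' should be checked to cover the degenerate representatives (half-spaces and negative-curvature balls), a point the paper glosses over as well.
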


\begin{proof}
  For construction of $K_d\star G_m$, we just repeat the construction in the proof of Theorem~\ref{thm:pk}.  Since the centers of the balls of $G_m$ are situated on a straight line, $G_m$ can only be a path, a cycle $C_m$ or a disjoint union of paths (possibly empty).  The first possibility is ruled out by Theorem~\ref{thm:pk}.  The cycle is only possible when $d=3$ and $m=6$, in which case the ball packing of $K_3\star C_6$ is Soddy's hexlet.  If $G_m$ is a disjoint union of paths, we are forced to leave gaps between balls, but Theorem~\ref{thm:pk} says that there is no space for any gap. So the construction is not possible.
\end{proof}

We now study some other graphs with the form $K_n\star G_m$ using kissing configuration and spherical codes.  A \emph{$d$-kissing configuration} is a packing of unit $d$-balls all touching another unit ball. The \emph{$d$-kissing number} $k(d,1)$ (the reason for this notation will be clear later) is the maximum number of balls in a $d$-kissing configuration.  The kissing number is known to be $2$ for dimension~$1$, $6$ for dimension~$2$, $12$ for dimension~$3$~\cite{conway1999}, $24$ for dimension~$4$~\cite{musin2003}, $240$ for dimension~$8$ and $196560$ for dimension~$24$~\cite{odlyzko1979}.  We have immediately the following theorem.
\begin{theorem}
  A graph in the form of $K_3\star G$ is $d$-ball packable if and only if $G$ is the tangency graph of a $(d-1)$-kissing configuration.
\end{theorem}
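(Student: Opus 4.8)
The plan is to realize the three vertices of $K_3$ as three mutually tangent balls and then read off the constraint that the join imposes on the remaining balls, namely that every vertex of $G$ must be tangent to all three of them. First I would exploit the Möbius invariance of the tangency graph (stated right after Definition~\ref{def:ballpacking}) to normalize the configuration. Let $B_1,B_2,B_3$ be the three balls representing $K_3$, let $p$ be the tangency point of $B_1$ and $B_2$, and apply the inversion centered at $p$. Since $\partial B_1$ and $\partial B_2$ both pass through $p$ and meet only there, their images are disjoint parallel hyperplanes, so $B_1$ and $B_2$ become two parallel half-spaces bounding a slab. After a similarity I may assume this slab is $\{-1\le x_d\le 1\}$, so that $B_1,B_2$ are the half-spaces $\{x_d\ge 1\}$ and $\{x_d\le -1\}$ (tangent to each other at $\infty$), with mid-hyperplane $H=\{x_d=0\}\cong\mathbb{R}^{d-1}$.

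The heart of the argument is then a dimension reduction. Any ball tangent to both half-spaces and contained in the slab must be a unit ball centered on $H$; in particular the image $B_3'$ of $B_3$ and the image of each ball of $G$ are unit balls with centers on $H$. Two unit balls centered on $H$ are tangent exactly when their centers are at distance $2$, and have disjoint interiors exactly when their centers are at distance at least $2$, conditions that depend only on the distance measured within $H$. Hence, identifying each such $d$-ball with the unit $(d-1)$-ball in $H$ having the same center, the collection of balls of $G$ becomes a packing of unit $(d-1)$-balls all touching the central unit ball $B_3'$, that is, a $(d-1)$-kissing configuration whose tangency graph is exactly $G$. This proves the forward direction.

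For the converse I would reverse this construction explicitly, without even invoking inversions. Given a $(d-1)$-kissing configuration touching a central ball $B_3'$ and having tangency graph $G$, place it in $H=\{x_d=0\}\subset\mathbb{R}^d$, replace each unit $(d-1)$-ball by the unit $d$-ball with the same center and radius, and adjoin the two half-spaces $\{x_d\ge 1\}$ and $\{x_d\le -1\}$. These half-spaces are tangent at $\infty$ and each is tangent to $B_3'$, so together with $B_3'$ they realize $K_3$; every ball of $G$ is a unit ball centered on $H$ and is therefore tangent to both half-spaces and to $B_3'$, while its tangencies to the other balls of $G$ are inherited from the kissing configuration. Checking that the interiors are pairwise disjoint then exhibits a $d$-ball packing of $K_3\star G$.

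The main obstacle I anticipate is purely technical bookkeeping around degeneracies of the inversion in the forward direction: I must ensure that no ball of $G$ (nor $B_3$) passes through the inversion center $p$, so that its image is a genuine bounded ball rather than a half-space or the exterior of a ball. This is forced, since $p\in\partial B_1$ cannot lie interior to any ball having disjoint interior from $B_1$, and a ball of $G$ cannot be tangent to $B_1$ precisely at $p$, for its image would then be a half-space, and a half-space cannot be tangent to both parallel half-spaces $B_1$ and $B_2$ from inside the slab. Once these degeneracies are excluded, the equivalence of the two directions is immediate.
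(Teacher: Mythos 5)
Your proof is correct and follows essentially the same route as the paper, which simply represents $K_3$ by two disjoint half-spaces at distance $2$ together with one unit ball, so that the remaining balls are forced to be unit balls centered on the mid-hyperplane and hence form a $(d-1)$-kissing configuration. Your write-up merely makes explicit the normalization by inversion, the dimension reduction to the mid-hyperplane, and the degenerate cases, all of which the paper leaves as an immediate remark.
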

To see this, just represent $K_3$ by one unit ball and two disjoint half-spaces at distance $2$ apart, then the other balls must form a $(d-1)$-kissing configuration.  For example, $K_3\star G_{13}$ is not $4$-ball packable, $K_3\star G_{25}$ is not $5$-ball packable, and in general, $K_3\star G_{k(d-1,1)+1}$ is not $d$-ball packable.  

We can generalize this idea as follows. A \emph{$(d,\alpha)$-kissing configuration} is a packing of unit balls touching $\alpha$ pairwise tangent unit balls. The \emph{$(d,\alpha)$-kissing number}~$k(d,\alpha)$ is the maximum number of balls in a $(d,\alpha)$-kissing configuration.  So the~$d$-kissing configuration discussed above is actually the $(d,1)$-kissing configuration, from where the notation $k(d,1)$ is derived.  Clearly, if $G$ is the tangency graph of a~$(d,\alpha)$-kissing configuration, $G\star K_1$ must be the graph of a~$(d,\alpha-1)$-kissing configuration, and $G\star K_{\alpha-1}$ must be the graph of a $d$-kissing configuration.  With a similar argument as before, we have
\begin{theorem}\label{thm:akissing}
  A graph in the form of $K_{2+\alpha}\star G$ is $d$-ball packable if and only if~$G$ is the tangency graph of a ${(d-1,\alpha)}$-kissing configuration.
\end{theorem}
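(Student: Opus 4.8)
The plan is to reduce the statement to a definitional unwinding by putting the copy of $K_{2+\alpha}$ into a normal form with a M\"obius transformation, exactly as in the case $\alpha=1$ treated just above. Recall that in $K_{2+\alpha}\star G$ every vertex of $G$ is adjacent to every vertex of $K_{2+\alpha}$, so in any $d$-ball packing realizing this graph the $2+\alpha$ balls representing $K_{2+\alpha}$ are pairwise tangent, and each remaining ball is tangent to all of them. The whole argument then consists of reading off a $(d-1,\alpha)$-kissing configuration from this normalized picture, and conversely lifting such a configuration back into dimension $d$.

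For the ``only if'' direction, suppose such a packing exists. I would pick two of the $2+\alpha$ mutually tangent balls and apply the inversion centered at their tangency point. Since tangency is M\"obius invariant, this sends these two balls to two half-spaces whose boundaries are parallel (being images of two spheres tangent at the centre of inversion), and after rescaling I may assume they are $\{x_d\ge 1\}$ and $\{x_d\le -1\}$, at distance $2$ apart. Writing $H=\{x_d=0\}\cong\mathbb{R}^{d-1}$ for the midplane, the key observation is that a ball with disjoint interior tangent to both half-spaces must be a unit ball centered on $H$: such a ball lies in the slab and is tangent to both bounding hyperplanes, so a radius-$r$ ball satisfies $1-r=-1+r$, whence $r=1$ and its center lies on $H$. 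Applying this to the remaining $\alpha$ balls of $K_{2+\alpha}$ shows they are pairwise tangent unit balls centered on $H$, and applying it to the balls of $G$ shows each is a unit ball centered on $H$ tangent to all $\alpha$ of these. Intersecting everything with $H$ yields a $(d-1)$-ball packing of unit balls touching $\alpha$ pairwise tangent unit balls with tangency graph $G$, i.e.\ a $(d-1,\alpha)$-kissing configuration realizing $G$.

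For the converse I would start from a $(d-1,\alpha)$-kissing configuration in $\mathbb{R}^{d-1}$ realizing $G$, embed $\mathbb{R}^{d-1}$ as the midplane $H$ of the slab $\{-1\le x_d\le 1\}$ in $\mathbb{R}^d$, and lift each unit $(d-1)$-ball to the unit $d$-ball with the same center in $H$. Two lifted balls are tangent exactly when their centers are at distance $2$, i.e.\ exactly when the original $(d-1)$-balls were tangent, so the lift preserves all tangencies and disjointness of interiors; moreover every lifted ball touches both bounding hyperplanes. Adding the half-spaces $\{x_d\ge 1\}$ and $\{x_d\le -1\}$, which are tangent to each other at infinity and to every lifted ball, then produces a $d$-ball packing whose tangency graph is precisely $K_{2+\alpha}\star G$.

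The curvature/center bookkeeping is routine; the genuinely delicate points I expect are two. First, justifying that the inversion at a tangency point carries the whole packing to a packing, i.e.\ that no ball is turned inside out: this holds because the center of inversion lies on the boundary of two balls and in the interior of none. Second, the geometric lemma above is what actually forces the would-be $G$-balls to be unit balls centered on $H$, ruling out exotic realizations; here one should also note that half-spaces cannot be tangent to both bounding half-spaces with disjoint interiors, and that negative-curvature balls are excluded because their interiors are unbounded and cannot sit inside the slab. Once these are in place the equivalence is immediate.
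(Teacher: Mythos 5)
Your argument is correct and takes essentially the same route as the paper, which disposes of this theorem in a single sentence by representing $K_{2+\alpha}$ as two half-spaces at distance $2$ apart together with $\alpha$ pairwise tangent unit balls --- exactly the normal form you reach via inversion at a tangency point. Your write-up simply supplies the routine verifications the paper leaves implicit (the slab argument forcing the remaining balls to be unit balls centered on the midplane, and the exclusion of half-spaces and negative-curvature balls), so there is nothing to add.
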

To see this, just represent $K_{2+\alpha}$ by two half-spaces at distance $2$ apart and $\alpha$ pairwise tangent unit balls, then the other balls must form a $(d-1,\alpha)$-kissing configuration.  As a consequence, a graph in the form of $K_{2+\alpha}\star G_{k(d-1,\alpha)+1}$ is not $d$-ball packable. The following corollary follows from the fact that ${k(d,d)=2}$ for all $d>0$
\begin{corollary}\label{cor:ktree}
  A graph in the form of $K_{d+1}\star G_3$ is not $d$-ball packable.
\end{corollary}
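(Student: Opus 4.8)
The plan is to invoke Theorem~\ref{thm:akissing} with the identification $K_{d+1}=K_{2+\alpha}$, where $\alpha=d-1$. Since that theorem is an equivalence, $K_{d+1}\star G_3$ is $d$-ball packable if and only if the three-vertex graph $G_3$ is the tangency graph of a $(d-1,d-1)$-kissing configuration, i.e.\ a packing of unit $(d-1)$-balls each touching $d-1$ pairwise tangent unit $(d-1)$-balls. It therefore suffices to show that any such configuration contains at most two balls: then no graph on three vertices can arise as its tangency graph, and non-packability follows at once. Equivalently, the whole corollary rests on the single fact $k(d-1,d-1)=2$.

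To establish $k(m,m)=2$ for every $m\geq 1$ (to be applied with $m=d-1$), I would argue geometrically. Fix the $m$ pairwise tangent unit balls in $\mathbb{R}^m$; their centers form a regular $(m-1)$-simplex of edge length $2$, unique up to isometry, whose affine hull $H$ has dimension $m-1$. The center of any unit ball tangent to all $m$ of them lies at distance exactly $2$ from each of the $m$ simplex vertices. The locus of points equidistant from all $m$ vertices is the line $\ell$ through the circumcenter orthogonal to $H$; crucially $\ell$ is one-dimensional, because $H$ has codimension $1$ in $\mathbb{R}^m$. Imposing the common distance $2$ then singles out exactly the two points of $\ell$ symmetric across $H$, giving at most two candidate balls.

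It remains to verify that these two balls genuinely form a packing, so that the count is exactly two. Writing $R$ for the circumradius of the simplex, the two admissible centers sit at signed heights $\pm h$ with $h^2=4-R^2$; substituting $R^2=2(m-1)/m$ gives $h^2=2(m+1)/m>1$, so the separation $2h$ between the two centers exceeds $2$ and their interiors are disjoint (they are external tangent to the original $m$ balls and do not even touch one another). Hence both can be added, no third fits, and $k(m,m)=2$.

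The reduction through Theorem~\ref{thm:akissing} is immediate, so the only genuine work—and the only place a dimension or sign slip could creep in—is the elementary geometry of the last two paragraphs: identifying the codimension-one equidistant locus $\ell$ and checking the non-overlap inequality $h>1$. With $m=d-1$ this yields $k(d-1,d-1)=2$, so a $(d-1,d-1)$-kissing configuration supports at most two balls while $G_3$ has three vertices; therefore $G_3$ is never such a tangency graph, and $K_{d+1}\star G_3$ is not $d$-ball packable.
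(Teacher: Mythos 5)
Your proof is correct and follows the paper's own route exactly: the paper derives this corollary from Theorem~\ref{thm:akissing} together with the bare assertion that $k(d,d)=2$ for all $d>0$. The only difference is that you actually verify that assertion, and your computation is sound --- the equidistant locus is indeed a line because the $m$ centers span a codimension-one affine subspace of $\mathbb{R}^m$, and $h^2=4-R^2=2(m+1)/m>1$ correctly shows the two candidate balls are disjoint, so $k(m,m)=2$ exactly.
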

We then see from Theorem~\ref{thm:ktree} that a $(d+1)$-tree is $d$-ball packable only if it is stacked $(d+1)$-polytopal.

A \emph{$(d,\cos\theta)$-spherical code}~\cite{conway1999} is a set of points on the unit $(d-1)$-sphere such that the spherical distance between any two points in the set is at least~$\theta$. We denote by~$A(d,\cos\theta)$ the maximal number of points in such a spherical code.  Spherical codes generalize kissing configurations. The minimal spherical distance corresponds to the tangency relation, and $A(d,\cos\theta)=k(d,1)$ if $\theta=\pi/3$. Corresponding to the tangency graph, the \emph{minimal-distance graph} of a spherical code takes the points as vertices and connects two vertices if the corresponding points attain the minimal spherical distance.  As noticed by Bannai and Sloane~\cite{bannai1981}*{Theorem 1}, the centers of unit balls in a $(d,\alpha)$-kissing configuration correspond to a $(d-\alpha+1,\frac{1}{\alpha+1})$-spherical code after rescaling.  Therefore:
\begin{corollary}\label{cor:sphcod}
  A graph in the form of $K_{2+\alpha}\star G$ is $(d+\alpha)$-ball packable if and only if $G$ is the minimal-distance graph of a $(d,\frac{1}{\alpha+1})$-spherical code.
\end{corollary}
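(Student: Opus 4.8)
The plan is to chain Theorem~\ref{thm:akissing} with the Bannai--Sloane correspondence just recalled. Fix the dimension of the ball packing to be $d+\alpha$. Then Theorem~\ref{thm:akissing} asserts that $K_{2+\alpha}\star G$ is $(d+\alpha)$-ball packable if and only if $G$ is the tangency graph of a $(d+\alpha-1,\alpha)$-kissing configuration. It therefore suffices to identify, in a tangency-preserving way, the $(d+\alpha-1,\alpha)$-kissing configurations with the $(d,\frac{1}{\alpha+1})$-spherical codes, which is precisely the content of~\cite{bannai1981}*{Theorem 1}; the whole proof is the composition of these two equivalences, and the only thing to check is that the correspondence carries tangency graphs to minimal-distance graphs.

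To make the correspondence explicit I would set up the geometry of a $(d',\alpha)$-kissing configuration with $d'=d+\alpha-1$. The $\alpha$ pairwise tangent unit balls have centers forming a regular $(\alpha-1)$-simplex of edge length $2$; place its circumcenter at the origin $O$. Every remaining ball, being a unit ball tangent to all $\alpha$ central balls, has its center at Euclidean distance $2$ from each simplex vertex, hence at the centroid in the directions spanning the simplex and on a common sphere of radius $\rho$ in the orthogonal directions. A short computation with the circumradius of a regular simplex gives $\rho^2=2(\alpha+1)/\alpha$ and places these centers in the $(d'-\alpha+1)=d$-dimensional subspace orthogonal to the simplex's affine hull. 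Rescaling to the unit $(d-1)$-sphere in this $\mathbb{R}^{d}$ produces a point set, i.e.\ a $(d,\cdot)$-spherical code, and the construction is manifestly reversible: from a spherical code one rescales to the sphere of radius $\rho$, embeds it orthogonally to a regular $(\alpha-1)$-simplex of edge $2$ centred at $O$, and recovers the unit balls.

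The quantitative heart of the argument is the threshold calculation, and this is the step I would be most careful about. Two remaining balls are tangent precisely when their centers are at distance $2$, which by the law of cosines gives angular distance $\theta$ with $2\rho^2(1-\cos\theta)=4$, so $\cos\theta=\frac{1}{\alpha+1}$; since any two disjoint balls have centers strictly farther apart, their angular distance is strictly larger. Thus $\theta$ is exactly the minimal spherical distance, its cosine is $\frac{1}{\alpha+1}$, and a pair of points attains the minimum if and only if the corresponding balls are tangent. This monotonicity—larger center-distance forcing larger angular distance—is what guarantees that tangency corresponds to the \emph{minimum} rather than to some intermediate distance, so that the tangency graph of the kissing configuration coincides with the minimal-distance graph of the resulting $(d,\frac{1}{\alpha+1})$-spherical code. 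Combining this bijection with Theorem~\ref{thm:akissing} yields the corollary; everything else is bookkeeping of dimensions and an appeal to the already-stated results.
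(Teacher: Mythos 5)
Your proof is correct and follows essentially the same route as the paper: the paper likewise obtains this corollary by combining Theorem~\ref{thm:akissing} with the Bannai--Sloane observation that centers of unit balls in a $(d',\alpha)$-kissing configuration rescale to a $(d'-\alpha+1,\frac{1}{\alpha+1})$-spherical code. Your explicit computation of $\rho^2=2(\alpha+1)/\alpha$ and the check that tangency corresponds exactly to the minimal angular distance merely fills in details the paper delegates to the citation of~\cite{bannai1981}.
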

\begin{table}[b!]\footnotesize
  \caption{Some known $(d,\frac{1}{\alpha+1})$-spherical codes for integer $\alpha$}
  \begin{center}
    \begin{tabular}{l*{3}{c}}
      spherical code 			& minimal distance graph	& $\alpha$  & $d$ \\
      \hline \hline
      $k$-orthoplicial prism		&$\lozenge_k\square K_2$			&$2$&$k+1$\\
      $k$-orthoplicial-pyramidal prism	&$(\lozenge_k\star K_1)\square K_2$	&$2$&$k+2$\\
      rectified $k$-orthoplex		&$L(\lozenge_k)$			&$1$&$k$\\
      augmented $k$-simplicial prism	&					&$k$&$k+1$\\
      \hline
      $2$-simplicial prism($-1_{21}$)~\cite{klitzing2000}*{3.4.1}&$K_3\square K_2$&$6$&$3$\\ % a=2d/(d-2)
      $3$-simplicial prism($-1_{31}$)~\cite{klitzing2000}*{4.9.2}&$K_4\square K_2$&$4$&$4$\\
      $5$-simplicial prism					&$K_6\square K_2$&$3$&$6$\\
      \hline
      triangle-triangle duoprism($-1_{22}$)\cite{klitzing2000}*{4.10}	&$K_3\square K_3$&$3$&$4$\\ % a=(d+2)/(d-2)
      tetrahedron-tetrahedron duoprism				&$K_4\square K_4$&$2$&$6$\\
      triangle-hexahedron duoprism				&$K_3\square K_6$&$2$&$7$\\ % 1=1/a+1/(p+1)+1/(q+1)
      \hline
      rectified $4$-simplex($0_{21}$)~\cite{bachoc2009}&$J_{5,2}$	&$5$&$4$\\
      rectified $5$-simplex($0_{31}$)				&$J_{6,2}$	&$3$&$5$\\ % a=(d+1)/(d-1)
      rectified $7$-simplex					&$J_{8,2}$	&$2$&$7$\\
      birectified $5$-simplex($0_{22}$)				&$J_{6,3}$	&$2$&$5$\\ % a=(d+1)/(2d-7)
      birectified $8$-simplex					&$J_{9,3}$	&$1$&$8$\\
      trirectified $7$-simplex					&$J_{8,4}$	&$1$&$7$\\
      \hline
      $5$-demicube($1_{21}$)~\cite{sloane}*{pack.5.16}		&		&$4$&$5$\\ % d=4(a+1)/a
      $6$-demicube($1_{31}$)					&		&$2$&$6$\\
      $8$-demicube						&		&$1$&$8$\\
      \hline
      $1_{22}$							&		&$1$&$6$\\ % obs: d+a=7
      $2_{31}$							&		&$1$&$7$\\ % obs: d+a=8
      $2_{21}$	\cite{cohn2007b}*{Appendix A}			&		&$3$&$6$\\ % obs: d+a=9
      $3_{21}$	\cite{bannai1981}				&		&$2$&$7$\\
      $4_{21}$	\cite{bannai1981}				&		&$1$&$8$\\
      \hline
      3p\textbardbl refl ortho 3p~\cite{klitzing2000}*{4.13}	&		&$2$&$4$\\
      3g\textbardbl gyro 3p~\cite{klitzing2000}*{4.6.2}		&		&$5$&$4$\\
      3g\textbardbl ortho 4g~\cite{klitzing2000}*{4.7.3}	&		&$5$&$4$\\
      3p\textbardbl ortho line~\cite{klitzing2000}*{4.8.2}	&		&$5$&$4$\\
      oct\textbardbl hex~\cite{sloane}*{pack.5.14}		&		&$4$&$5$\\
      \hline
    \end{tabular}
    \label{tab:sphcod}
  \end{center}
\end{table}
We give in Table~\ref{tab:sphcod} an incomplete list of $(d,\frac{1}{\alpha+1})$-spherical codes for integer values of $\alpha$. They are therefore $(d+\alpha-1,\alpha)$-kissing configurations for the $\alpha$ and $d$ given in the table.  The first column is the name of the polytope whose vertices form the spherical code. Some of them are from Klitzing's list of segmentochora~\cite{klitzing2000}, which can be viewed as a special type of spherical codes. Some others are inspired from Sloane's collection of optimal spherical codes~\cite{sloane}. For those polytopes with no conventional name, we keep Klitzing's notation, or give a name following Klitzing's method.  The second column is the corresponding minimal-distance graph, if a conventional notation is available. Here are some notations used in the table: 
\begin{itemize}
  \item For a graph $G$, its \emph{line graph} $L(G)$ takes the edges of $G$ as vertices, and two vertices are adjacent iff the corresponding edges share a vertex in $G$. 
  \item The \emph{Johnson graph} $J_{n,k}$ takes the $k$-element subsets of an $n$-element set as vertices, and two vertices are adjacent whenever their intersection contains $k-1$ elements. Especially, $J_{n,2}=L(K_n)$. 
  \item For two graph $G$ and $H$, $G\square H$ denotes the \emph{Cartesian product}.  
\end{itemize}
We would like to point out that for $1\leq\alpha\leq 6$, vertices of the uniform ${(5-\alpha)_{21}}$~polytope form an $(8,\alpha)$-kissing configuration. These codes are derived from the $E_8$ root lattice~\cite{bannai1981}*{Example 2}. They are optimal and unique except for the trigonal prism($(-1)_{21}$~polytope)~\citelist{\cite{cohn2007b}*{Appendix A}\cite{bachoc2009}}. There are also spherical codes similarly derived from the \emph{Leech lattice}~\citelist{\cite{bannai1981}*{Example 3}\cite{cohn2007}}.

As another example, since
\[k(d,\alpha)=A\Big(d-\alpha+1,\frac{1}{\alpha+1}\Big).\]
the following fact provides another proof Corollary~\ref{cor:gk}:
\[
k(d,d-1)=A(2,1/d)=\begin{cases}
  4 &\text{if $d\geq 4$}\\
  5 &\text{if $d=3$}\\
  6 &\text{if $d=2$(optimal)}
\end{cases}
\]

Before ending this part, we present the following trivial theorem.
\begin{theorem}
  A graph in the form of $K_2\star G$ is $d$-ball packable if and only if $G$ is~$(d-1)$-unit-ball packable.
\end{theorem}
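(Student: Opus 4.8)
The plan is to realise $K_2$ by two parallel half-spaces bounding a slab, so that every ball adjacent to both of them is forced to be a unit ball sitting inside the slab. Concretely, I would represent the two vertices of $K_2$ by the half-spaces $\mathsf A=\{\x\mid x_d\le 0\}$ and $\mathsf B=\{\x\mid x_d\ge 2\}$, which are tangent at $\infty$ and have disjoint interiors. Any ball disjoint from the interiors of $\mathsf A$ and $\mathsf B$ yet tangent to both must lie in the slab $0\le x_d\le 2$ and touch both bounding hyperplanes; hence it is a unit ball whose center lies on the median hyperplane $\{x_d=1\}\cong\mathbb R^{d-1}$. This is the mechanism that translates tangencies in $\mathbb R^d$ into tangencies of unit balls in $\mathbb R^{d-1}$.

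For the backward direction I would start from a $(d-1)$-unit-ball packing whose tangency graph is $G$, place its centers on the median hyperplane $\{x_d=1\}$, and promote each unit $(d-1)$-ball to the unit $d$-ball with the same center. Adjoining the two half-spaces $\mathsf A$ and $\mathsf B$ produces a $d$-ball packing. Two promoted unit balls are tangent in $\mathbb R^d$ exactly when their centers are at distance $2$, i.e.\ exactly when the original $(d-1)$-balls are tangent; each is tangent to both $\mathsf A$ and $\mathsf B$; and $\mathsf A$, $\mathsf B$ are tangent to each other at $\infty$. The tangency graph is therefore precisely $K_2\star G$.

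For the forward direction, suppose $\mathcal S$ is a $d$-ball packing with tangency graph $K_2\star G$, and let $S_1,S_2$ be the balls representing the two vertices of $K_2$. Since $S_1,S_2$ are tangent, an inversion centred at their tangency point sends both to half-spaces, and because tangent spheres share a common tangent hyperplane at the tangency point, these images are parallel; a similarity then normalises them to $\mathsf A,\mathsf B$ as above. All tangency relations survive because the tangency graph is invariant under M\"obius transformations and reflections. Every ball of $\mathcal S$ representing a vertex of $G$ is tangent to both $\mathsf A$ and $\mathsf B$, hence by the observation above is a unit ball centred on $\{x_d=1\}$; intersecting these balls with that hyperplane yields a $(d-1)$-unit-ball packing whose tangency graph is $G$.

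The only point needing care is the normalisation in the forward direction: I must confirm that inverting at the tangency point of $S_1$ and $S_2$ turns a pair of externally tangent balls into a slab---two half-spaces with disjoint interiors bounded by parallel hyperplanes---rather than into some other configuration, and that no ball of $G$ can instead be a half-space tangent to $\mathsf A$ and $\mathsf B$ only at $\infty$. Both follow from elementary inversive geometry: two externally tangent balls invert to parallel half-spaces whose outward normals point away from the slab, while any ball lying in the slab, disjoint from the interiors of $\mathsf A$ and $\mathsf B$ and tangent to both, must meet each bounding hyperplane at a finite point, so it is a genuine unit ball. Everything else is the bookkeeping of matching tangencies, which is why the statement is essentially immediate once the slab picture is in place.
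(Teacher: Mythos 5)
Your proof is correct and follows exactly the paper's (one-line) argument: represent $K_2$ by two disjoint half-spaces, which forces the balls of $G$ to be unit balls centred on the median hyperplane, giving the correspondence with a $(d-1)$-unit-ball packing. Your write-up simply supplies the routine normalisation and bookkeeping details that the paper leaves implicit.
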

For the proof, just use disjoint half-spaces to represent $K_2$, then $G$ must be representable by a packing of unit balls.

\subsection{Graphs in the form of $\lozenge_d\star G_m$}
\begin{theorem}\label{thm:odp4}
  A graph in the form of $\lozenge_{d-1}\star P_4$ is not $d$-ball packable, but $\lozenge_{d+1}=\lozenge_{d-1}\star C_4$ is.
\end{theorem}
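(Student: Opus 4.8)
The plan is to prove the two assertions separately: first realize the orthoplex $\lozenge_{d+1}=\lozenge_{d-1}\star C_4$ explicitly, and then exploit the rigidity behind that realization to rule out $\lozenge_{d-1}\star P_4$. For the positive part I would start from the regular cross-polytope in $\mathbb{R}^{d+1}$ with vertices $\pm\e_i$. All of its edges are tangent to the sphere $\Sigma$ of radius $1/\sqrt2$ centred at the origin, the tangency point of the edge joining $\pm\e_i$ and $\pm\e_j$ being its midpoint, which lies on $\Sigma$. Placing at each vertex the $(d+1)$-ball of radius $1/\sqrt2$ centred there, two balls are tangent exactly when the vertices are adjacent (centres at distance $\sqrt2$, the sum of the radii), while the two balls of an antipodal pair are disjoint (centres at distance $2>\sqrt2$). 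Each such ball is orthogonal to $\Sigma$, since $(1/\sqrt2)^2+(1/\sqrt2)^2=1=\lVert\e_i\rVert^2$; intersecting the whole family with $\Sigma\cong\hat{\mathbb{R}}^d$ therefore yields a genuine $d$-ball packing whose tangency graph is $\lozenge_{d+1}$.

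For the non-packability of $\lozenge_{d-1}\star P_4$, write the path as $p_1p_2p_3p_4$ and call the $2(d-1)$ orthoplex vertices the \emph{frame}. Assume a packing exists. Since $p_2\sim p_3$ and both are tangent to every frame ball, I send their tangency point to infinity, turning $p_2,p_3$ into two parallel half-spaces at distance $2$; every frame ball then touches both, hence is a unit ball centred on the mid-hyperplane $H\cong\mathbb{R}^{d-1}$, so the frame is a unit-ball packing of $\lozenge_{d-1}$ in $H$. The crucial observation is that $p_1$ (like $p_4$) is tangent to all $2(d-1)$ frame balls, which forces the orthogonal projection of its centre to $H$ to be equidistant from all frame centres; hence the frame centres are \emph{cospherical}. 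I would then establish the key lemma: a unit-ball packing of $\lozenge_{d-1}$ with cospherical centres must be the regular cross-polytope, with circumradius exactly $\sqrt2$.

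Granting the lemma, the balls tangent to the whole frame have their centres on the axis through the circumcentre perpendicular to $H$, together with the two half-spaces $p_2,p_3$; this is a one-parameter family on which a short computation pins down a unique finite member tangent to $p_2$ and a unique one tangent to $p_3$. These must be $p_1$ and $p_4$. Computing their radii and the distance between their centres, using that the circumradius equals $\sqrt2$, shows that $p_1$ and $p_4$ are mutually tangent, contradicting the non-edge $p_1\not\sim p_4$ of $P_4$. This is precisely the ``ring closes up'' phenomenon already met for Soddy's hexlet in the proof of Theorem~\ref{thm:pk}: the common tangent balls form a ring admitting only four pairwise-consecutive members, so a path of four cannot be completed without forcing the cycle $C_4$.

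I expect the main obstacle to be the rigidity lemma. A priori the unit-ball packings of $\lozenge_{d-1}$ are flexible—already for $d=3$ the four disks realizing $C_4$ form a flexible rhombus—so regularity is not automatic, and it is exactly the cospherical condition, extracted from the existence of a common tangent ball, that eliminates the flexing. I would prove it through the Gram matrix of the vectors $u_k=P_k-O$ from the circumcentre $O$ to the centres $P_k$: the distance-$2$ constraints make every within-pair symmetric combination $u_i^++u_i^-$ orthogonal to all $u_k$, hence zero, so antipodal pairs satisfy $u_i^-=-u_i^+$; comparing the two kinds of cross-pair inner products then forces the $u_i^+$ to be orthogonal of common length $\sqrt2$, i.e.\ the regular cross-polytope. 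Once this is in hand, the remaining computations are elementary.
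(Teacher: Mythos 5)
Your argument for the non-packability of $\lozenge_{d-1}\star P_4$ follows essentially the same route as the paper's: normalize the middle edge $p_2p_3$ to two parallel half-spaces at distance $2$, so the frame becomes $2(d-1)$ unit balls whose centres must lie on a sphere in the mid-hyperplane and form a regular $(d-1)$-orthoplex of circumradius $\sqrt 2$, whence the two end balls each have radius $1/2$ and are forced to be mutually tangent, closing $P_4$ into $C_4$. Your explicit realization of $\lozenge_{d+1}$ from the edge-tangent regular cross-polytope (intersecting the orthogonal balls at the vertices $\pm e_i$ with the midsphere) is a correct and arguably cleaner alternative to the paper's step-by-step construction, and your insistence on actually proving the rigidity of the cospherical frame is a genuine improvement: the paper only asserts that the centres ``must be the vertices of a regular orthoplex.'' The one step in your sketch that does not work as stated is inside that rigidity lemma. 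For cospherical centres of common norm $r$, the cross-pair distance-$2$ constraints give $\langle u_i^{\pm},u_j^{\pm}\rangle=r^2-2$ for $i\neq j$, so $\langle u_i^++u_i^-,u_j^{\pm}\rangle=2(r^2-2)$, which vanishes only once you already know $r=\sqrt 2$; the orthogonality of $u_i^++u_i^-$ to all $u_k$ is therefore not a direct consequence of the distance constraints, as you claim. The repair is to work with the difference vectors first: since each $u_i^{\pm}$ (with $i\neq j$) and the circumcentre are equidistant from $u_j^+$ and $u_j^-$, the vector $e_j=u_j^+-u_j^-$ is orthogonal to every $u_i^{\pm}$ with $i\neq j$, hence the $e_j$ are pairwise orthogonal and span the mid-hyperplane; moreover $|u_j^+|=|u_j^-|$ gives $e_j\perp(u_j^++u_j^-)$, so each midpoint $u_j^++u_j^-$ is orthogonal to the whole basis $\{e_1,\dots,e_{d-1}\}$ and must vanish. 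From $u_j^{\pm}=\pm e_j/2$ one then reads off $r=\sqrt2$ and regularity, and the rest of your computation (radii $1/2$, centres at distance $1$) checks out.
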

\begin{proof}
  The graph $\lozenge_{d-1}$ is the $1$-skeleton of the $(d-1)$-dimensional orthoplex.  The vertices of a regular orthoplex of edge length $\sqrt 2$ forms an optimal spherical code of minimal distance $\pi/2$.  As in the proof of Theorem~\ref{thm:pk}, we first construct the ball packing of $\lozenge_{d-1}\star P_2$. The edge $P_2$ is represented by two disjoint half-spaces. The graph $\lozenge_{d-1}$ is represented by $2(d-1)$ unit balls. Their centers are on a $(d-2)$-dimensional sphere $\mathsf{S}$, otherwise further construction would not be possible. So the centers of these unit balls must be the vertices of a regular $(d-1)$-dimensional orthoplex of edge length $2$, and the radius of~$\mathsf{S}$ is $1/\sqrt 2$.  

  We now construct $\lozenge_{d-1}\star P_3$ by adding the unique ball that is tangent to all the unit balls and also to one half-space. After an elementary calculation, the radius of this ball is $1/2$. By symmetry, a ball touching the other half-space has the same radius. These two balls must be tangent since their diameters sum up to $2$.  Therefore, an attempt for constructing a ball packing of $\lozenge_{d-1}\star P_4$ results in a ball packing of $\lozenge_{d+1}=\lozenge_{d-1}\star C_4$.
\end{proof}

\begin{figure}[htb]
  \centering
  \includegraphics[width=\textwidth]{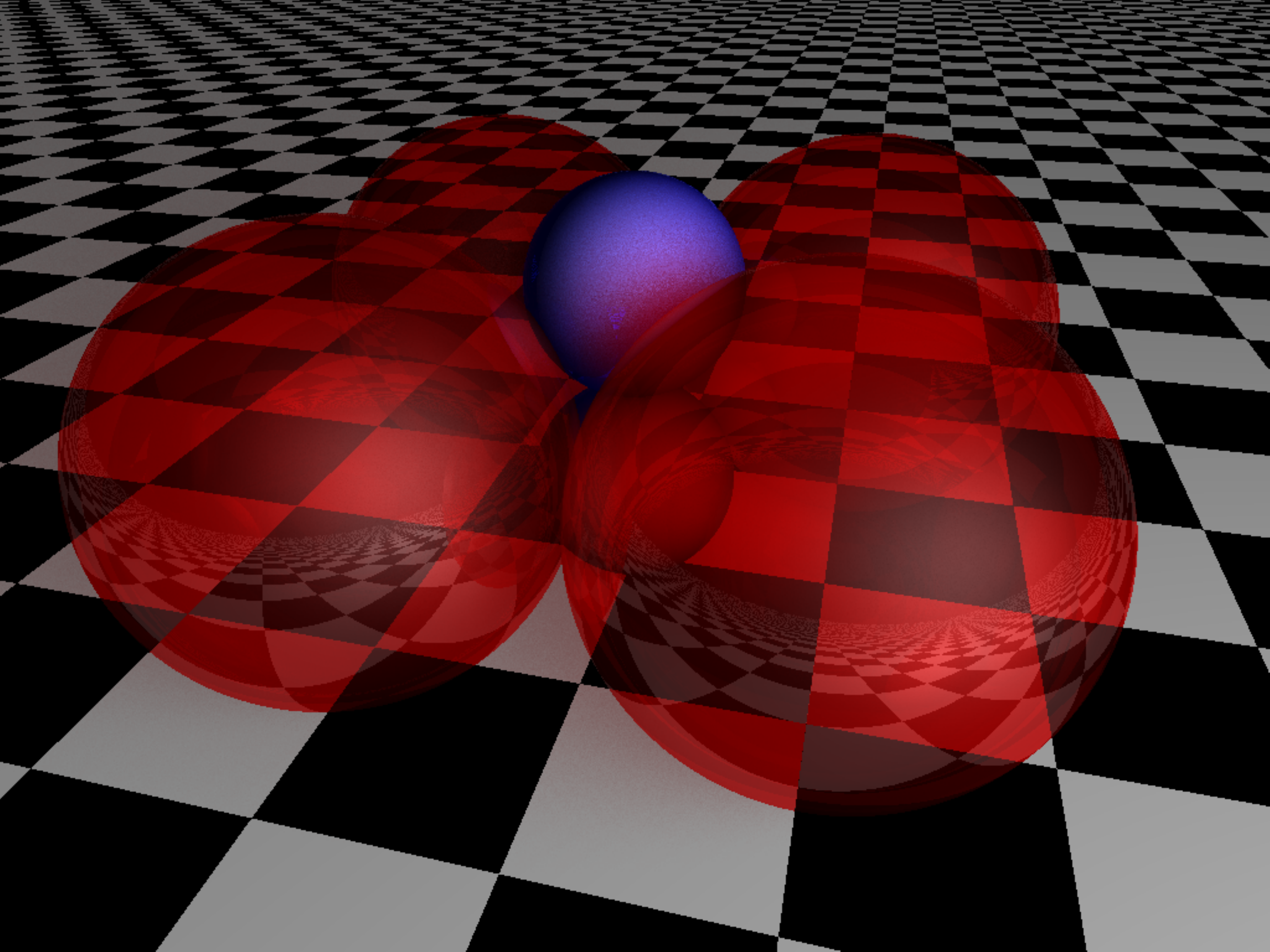}
  \caption{\label{pic:C4C4} A ball packing of $C_4\star C_4$. The red balls form a cycle, and the blue balls form a cycle with the lower and the upper half-space. The upper half-space is not shown. The image is rendered by POV-Ray.}
\end{figure}

For example, $C_4\star C_4$ is $3$-ball packable, as shown in Figure~\ref{pic:C4C4}. This is also observed by Maehara and Oshiro in~\cite{maehara2000}.  By the same argument as in the proof of Corollary~\ref{cor:gk}, we have 
\begin{corollary}\label{cor:odg4}
  A graph in the form of $\lozenge_{d-1}\star G_4$ is not $d$-ball packable, with the exception of $\lozenge_{d+1}=\lozenge_{d-1}\star C_4$.
\end{corollary}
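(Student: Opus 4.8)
The plan is to imitate the proof of Corollary~\ref{cor:gk} almost verbatim, with Theorem~\ref{thm:odp4} playing the role of Theorem~\ref{thm:pk}. First I would reinstate the rigid configuration from the proof of Theorem~\ref{thm:odp4}: represent $\lozenge_{d-1}$ by the $2(d-1)$ unit balls whose centres are the vertices of a regular $(d-1)$-orthoplex of edge length $2$ on the sphere $\mathsf{S}$ of radius $1/\sqrt 2$. The join forces every ball representing a vertex of $G_4$ to be tangent to all of these unit balls, and, as noted in that proof, such a ball must be centred on the axis $\ell$ through the centre of $\mathsf{S}$ orthogonal to the hyperplane carrying $\mathsf{S}$ (a half-space tangent to all unit balls being the limiting case at infinity). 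Hence all four balls realizing $G_4$ lie on the single line $\ell$.

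Next I would classify which graphs four collinear balls can realize. Two collinear balls are tangent only if they are consecutive along $\ell$, so every vertex has degree at most $2$; consequently the tangency graph is a disjoint union of paths together with at most one cycle, and a cycle can close only through the point at infinity, using the two extremal half-spaces as its ends. With exactly four balls the only such cycle is the full $C_4$, which reproduces the packing of $\lozenge_{d+1}=\lozenge_{d-1}\star C_4$ from Theorem~\ref{thm:odp4}. This both isolates the exceptional case and disposes of every $G_4$ of maximum degree at least $3$ (for instance $K_4$ or the star), which simply cannot be placed on a line.

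It then remains to rule out the acyclic possibilities. The single path $P_4$ is excluded directly by Theorem~\ref{thm:odp4}. Any disjoint union of paths with at least two components would require a gap between two consecutive balls on $\ell$; but Theorem~\ref{thm:odp4} shows the configuration is tight, since the two balls of radius $1/2$ touching the bounding half-spaces already meet, their diameters summing to exactly the distance $2$ between the half-spaces. Thus there is no room to separate any consecutive pair, no disconnected $G_4$ is realizable, and $C_4$ remains the unique exception.

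I expect the main obstacle to be making this rigidity-plus-tightness step fully rigorous rather than merely visual. One must argue that, up to a M\"obius transformation, the two extreme balls may always be taken to be the half-spaces of Theorem~\ref{thm:odp4}, that the intermediate balls tangent to all unit balls are then forced, and that the space-filling relation (total diameter equal to $2$) genuinely forbids inserting any gap. Once this rigidity is secured, the case analysis above — path excluded by Theorem~\ref{thm:odp4}, cycle giving the stated exception, disjoint unions forbidden by tightness, higher-degree graphs not collinear — closes the argument exactly as in Corollary~\ref{cor:gk}.
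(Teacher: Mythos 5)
Your proposal is correct and follows essentially the same route as the paper, which disposes of this corollary in a single line by invoking ``the same argument as in the proof of Corollary~\ref{cor:gk}'': collinearity of the centers forces $G_4$ to be a path, a cycle, or a disjoint union of paths; the path is excluded by Theorem~\ref{thm:odp4}, the cycle yields the stated exception $\lozenge_{d+1}$, and tightness of the configuration forbids the gaps a disconnected $G_4$ would require. Your explicit flagging of the normalization and no-gap step as the part needing rigor is, if anything, more careful than the paper's own treatment.
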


\subsection{Graphs in the form of $G_n\star G_m$}

The following is a corollary of Corollary~\ref{cor:gk}.
\begin{corollary}\label{cor:g6g3}
  A graph in the form of $G_6\star G_3$ is not $3$-ball packable, with the exception of $C_6\star C_3$.
\end{corollary}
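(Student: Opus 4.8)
The plan is to reduce every case to Corollary~\ref{cor:gk} by examining, in a hypothetical $3$-ball packing of $G_6\star G_3$, the three balls that represent the vertices of $G_3$. Writing $G_6\star G_3=G_3\star G_6$, each of these three balls is tangent to all six balls representing $G_6$. There are only four graphs on three vertices: $\bar K_3$, $K_2\cup K_1$, $P_3$ and $K_3=C_3$. I will show that packability forces $G_3=C_3$ and $G_6=C_6$, which is exactly the claimed exception $C_6\star C_3=K_3\star C_6$ (packable by Corollary~\ref{cor:gk}).

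If $G_3=K_3$, then $G_6\star G_3=K_3\star G_6$ and Corollary~\ref{cor:gk} with $d=3$ applies directly: the graph is not $3$-ball packable unless $G_6=C_6$. It remains to rule out the three cases with $G_3\neq K_3$, and here I would re-run the geometric construction from the proof of Theorem~\ref{thm:pk}. Pick two of the three $G_3$-balls and normalise them by a M\"obius transformation. If $G_3$ has an edge $ab$, send the tangent pair to two parallel half-spaces at distance $2$; if $G_3=\bar K_3$, send a disjoint pair to two concentric spheres. In either case the six $G_6$-balls, being tangent to both chosen balls, become equal balls whose centres lie on a common mid-surface---the median plane of the slab, or the mid-sphere of the annulus. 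The key point is that the third $G_3$-ball is also tangent to all six: this condition forces the six centres to be \emph{concyclic}, on a circle of some radius $R$. A squeeze on $R$ then finishes the argument. The mid-surface geometry caps $R$ from above (the tangency-circle radius cannot exceed the third ball's reach, which is in turn at most the common diameter, since that ball must itself fit between the two normalised balls), while disjointness bounds $R$ from below: among six points on a circle of radius $R$ two subtend an angle at most $\pi/3$ at the centre, hence lie at chord distance at most $2R\sin(\pi/6)=R$, so $R$ is at least the common diameter. Equality forces the regular hexagon, i.e. $G_6=C_6$, and pins the third ball to the centre of the configuration, where it is tangent to \emph{both} normalised balls. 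In the slab case this makes the third vertex adjacent to both endpoints of the edge $ab$, so $G_3=K_3$; in the concentric case it makes the third ball tangent to both concentric spheres, contradicting that all three $G_3$-vertices are independent. As $G_3\neq K_3$ always leaves the third vertex missing at least one of these tangencies, both conclusions are contradictory, so no packing exists.

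I expect the concentric sub-case ($G_3=\bar K_3$) to be the main obstacle, since it is the only one not already contained in the slab construction behind Corollary~\ref{cor:gk}: there is no tangent pair to turn into parallel planes, and one must instead verify the annulus computation showing that a ball tangent to both concentric spheres of radii $a<b$ has radius $(b-a)/2$ with centre on the mid-sphere of radius $(a+b)/2$, yielding the squeeze $2t\le R\le 2t$ with $t=(b-a)/2$. The remaining routine point is to confirm that the upper and lower bounds on $R$ meet only at the regular hexagon, so that the forced tangency of the third ball to both normalised balls is genuine rather than an artefact of a degenerate limit.
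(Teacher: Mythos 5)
Your proof is correct, but it takes a genuinely different route from the paper's. The paper normalises all three balls of $G_3$ to congruent unit balls, observes that every ball of $G_6$ must then be centred on the axis through the circumcentre of the three centres, and appeals to the diameter-sum count behind Theorem~\ref{thm:pk} and Corollary~\ref{cor:gk} to conclude that at most six disjoint balls fit on that axis, with six occurring only for Soddy's hexlet. You instead normalise only a \emph{pair} of $G_3$-balls (to a slab or to concentric spheres according to whether they are adjacent), which makes the six $G_6$-balls congruent with concyclic centres, and you close with the self-contained squeeze $2t\le R\le r_c+t\le 2t$ on the radius of that circle. Each approach buys something: the paper's is shorter and recycles the computations \eqref{eqn:5ball}--\eqref{eqn:6ball}, but it tacitly treats the pairwise-tangent position of the three unit balls as extremal (when $G_3\neq K_3$ the three unit balls are spread further apart, and one still has to argue this only reduces the count); your version needs no such monotonicity, and the equality case of the squeeze directly forces $r_c=t$ and hence tangency of the third $G_3$-ball to both normalised balls, contradicting the missing edge. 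Your annulus treatment of $\bar K_3$ is the genuinely new ingredient and it is sound. If you write this up, the points to spell out are: that the third $G_3$-ball and the six $G_6$-balls are necessarily bounded balls of positive curvature (they must lie inside the slab or annulus, so neither half-spaces nor negative-curvature balls can occur, which justifies ``external tangency'' and the bound $r_c\le t$); that a non-adjacent pair in a packing containing a third ball has disjoint closures, so the concentric normalisation is available; and that six points on a circle of radius $2t$ at pairwise distance at least $2t$ must form a regular hexagon, which is what pins $G_6$ to $C_6$ in the exceptional case.
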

\begin{proof}
  As in the proof of Theorem~\ref{thm:pk}, up to M\"obius transformations, we may represent $G_3$ by three unit balls. We assume that their centers are not collinear, otherwise further construction is not possible. Let $\mathsf{S}$ be the $1$-sphere decided by their centers. Every ball representing a vertex of $G_6$ must center on the straight line passing through the center of $\mathsf{S}$ perpendicular to the plane containing $\mathsf{S}$. From the proof of Corollary~\ref{cor:gk}, the number of disjoint balls touching all three unit balls is at most six, while six balls only happens in the Soddy's hexlet.
\end{proof}

The following corollaries follow from the same argument with slight modification.
\begin{corollary}\label{cor:g4g4}
  A graph in the form of $G_4\star G_4$ is not $3$-ball packable, with the exception of $C_4\star C_4$.
\end{corollary}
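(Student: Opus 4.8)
The plan is to mirror the proof of Corollary~\ref{cor:g6g3}, replacing its ``three balls determine an axis line'' step by a ``four concyclic balls determine an axis line'' step, and then to feed the outcome into Corollary~\ref{cor:odg4}. Throughout I work up to Möbius transformations and reflections, as in Theorem~\ref{thm:pk}.

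First I would establish the underlying configuration. Given a $3$-ball packing of $G_4\star H_4$, write $A=\{a_1,\dots,a_4\}$ and $B=\{b_1,\dots,b_4\}$ for the balls of the two factors; every $a_i$ is tangent to every $b_j$. If the centers of $A$ are not collinear, three of them have non-collinear centers, and since each $b_j$ touches all three, the standard fact used in Corollary~\ref{cor:g6g3} places every $b_j$ on the line $\ell$ through the circumcenter perpendicular to the plane of those three centers; thus $B$ is collinear. Hence at least one factor, say $B$, is collinear on a line $\ell$. Now each $a_i$ is tangent to the four balls of $B$, whose centers lie on $\ell$; solving the tangency equations $\lVert x-c_j\rVert=\rho+r_j$ as a linear system in (center, radius) shows that the height along $\ell$, the distance to $\ell$, and the radius are all determined, so the four balls of $A$ are \emph{congruent} and centered on a common circle $\mathsf S$ with axis $\ell$. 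In particular $A$ cannot also be collinear (a line meets $\mathsf S$ in at most two points, too few for four balls), so exactly one factor is the ``line factor'' $B$ and the other is the ``circle factor'' $A$. Because balls on a line, and congruent balls on a circle, each touch at most two neighbors and contain no triangle, both $G_4$ and $H_4$ are subgraphs of $C_4$.

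Next I would pin down the radii. Let $R$ be the radius of $\mathsf S$ and $\rho$ the common radius of the balls of $A$. A ball centered on $\ell$ tangent to all four balls of $A$ and to a half-space tangent to all of $A$ has, by an elementary computation, radius $R^2/(4\rho)$; the two such polar balls are mutually tangent exactly when $R=\rho\sqrt2$, which is precisely the condition that the four congruent balls of $A$ sit at $90^\circ$ spacing, i.e.\ that $G_4$ is the regular square $\lozenge_2=C_4$. More generally, the balls on $\ell$ tangent to $A$ form a bounded chain capped at top and bottom by the two half-spaces tangent to $A$ (as in Theorem~\ref{thm:odp4}), and the number of its members equals four---so that $H_4$ has the right size and closes into a cycle---exactly when $R=\rho\sqrt2$. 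Thus a packing forces one factor to be $\lozenge_2=C_4$. Feeding this into Corollary~\ref{cor:odg4} (the case $d=3$, where $\lozenge_2=C_4$) shows the other factor must be $C_4$ as well, and conversely $C_4\star C_4$ is realized by Theorem~\ref{thm:odp4}. This isolates $C_4\star C_4$ as the unique exception.

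The hard part will be the closure step: proving that the capped axis-chain has \emph{exactly} four members precisely when $R=\rho\sqrt2$, and hence that every proper subgraph of $C_4$ (and every non-regular congruent placement on $\mathsf S$) is excluded. This is the analogue---now for a ring of four balls rather than a Soddy hexlet of six---of the ``no room for gaps'' rigidity behind Corollary~\ref{cor:gk} and the sum-of-diameters computation of Theorem~\ref{thm:pk}: one must show the chain neither over- nor under-shoots, so that it can close only as $C_4$, forcing $R=\rho\sqrt2$ and thereby both factors to be $C_4$. The degenerate cases (some ball a half-space, or three chosen centers collinear) are handled exactly as the ``otherwise further construction is not possible'' clauses in Corollaries~\ref{cor:g6g3} and~\ref{cor:odg4}.
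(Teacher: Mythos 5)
Your plan is essentially the paper's proof: normalize so that one factor consists of congruent balls centered on a circle $\mathsf S$, force the other factor onto the axis of $\mathsf S$, and then invoke the orthoplex analysis of Theorem~\ref{thm:odp4}/Corollary~\ref{cor:odg4} to see that only $C_4\star C_4$ survives. Two points deserve attention. First, the ``standard fact'' you import from Corollary~\ref{cor:g6g3} --- that a ball tangent to three balls with non-collinear centers is centered on the axis of their circumcircle --- is false for balls of unequal radii; it holds because the paper first applies a M\"obius transformation making the three balls \emph{unit} balls, so that tangency forces equidistance from the three centers. You should make that normalization explicit before deducing collinearity of $B$ (your subsequent linear-algebra step for $A$ then plays the role of the paper's one-line claim that the fourth ball of the first factor is again a unit ball on $\mathsf S$, and needs the same genericity caveat that the resulting system in height and radius has rank two).

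Second, the step you yourself flag as ``the hard part'' --- that a chain of four pairwise disjoint balls on the axis, each tangent to all of $A$, exists only when $R=\rho\sqrt2$ and then only as two half-spaces plus two balls of radius $\rho/2$ --- is precisely what the paper outsources to Corollary~\ref{cor:odg4}, whose proof in turn rests on the explicit computation of Theorem~\ref{thm:odp4} for $R=\rho\sqrt 2$ together with the monotonicity observation (as in Corollary~\ref{cor:g4g6}) that a larger $R$ only enlarges every axis ball and leaves less room. Since you are not assuming the paper's lemmas, you do need to carry this out rather than defer it; it is not hard: writing $u=t+\sqrt{t^2+R^2}$ for the ball centered at height $t$ on the axis (so $u=0$ and $u=\infty$ are the two tangent half-spaces), disjointness of consecutive members reads $u_i+R^2/u_{i+1}\leq 2\rho$, and iterating the map $u\mapsto R^2/(2\rho-u)$ from $u_1\geq 0$ shows that a fourth member forces $R^2\leq 2\rho^2$ with equality and with $(u_1,\dots,u_4)=(0,\rho,2\rho,\infty)$, i.e.\ exactly the $C_4$ of Theorem~\ref{thm:odp4}. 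With that computation supplied (or with Corollary~\ref{cor:odg4} cited for it), your argument closes correctly and matches the paper's.
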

\begin{proof}
  Up to M\"obius transformation, we may represent three vertices of the first~$G_4$ by unit balls, whose centers decide a $1$-sphere $\mathsf{S}$. Balls representing vertices of the second $G_4$ must center on the straight line passing through the center of $\mathsf{S}$ perpendicular to the plane containing $\mathsf{S}$. Then the remaining vertex of the first~$G_4$ must be represented by a unit ball centered on $\mathsf{S}$, too. We conclude from Corollary~\ref{cor:odg4} that the only possibility is $C_4\star C_4$.
\end{proof}

\begin{corollary}\label{cor:g4g6}
  A graph in the form of $G_4\star G_6$ is not $4$-ball packable, with the exception of $C_4\star\lozenge_3$.
\end{corollary}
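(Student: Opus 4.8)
The plan is to repeat the argument of Corollary~\ref{cor:g4g4} one dimension higher, reducing everything to Corollary~\ref{cor:odg4}. Note first that the claimed exception is genuinely $4$-ball packable: $C_4\star\lozenge_3=\lozenge_2\star\lozenge_3=\lozenge_5$, which is $4$-ball packable by Theorem~\ref{thm:odp4}. In the realization of $\lozenge_5$ coming from that theorem, the $\lozenge_3$ part sits as unit balls on a $2$-sphere while the $C_4$ part sits on the perpendicular axis, so I would aim to force the six-vertex side onto a sphere and the four-vertex side onto a line.

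Up to a M\"obius transformation, I would represent four of the six vertices of $G_6$ by unit balls and assume their centers affinely span a $3$-flat $\Pi$ (if they only span a smaller flat the construction degenerates, exactly as in Corollaries~\ref{cor:g6g3} and~\ref{cor:g4g4}, and must be excluded separately). These four centers then lie on a $2$-sphere $\mathsf{S}\subset\Pi$. Since in the join every vertex of $G_4$ is adjacent to all of $G_6$, each $G_4$ ball is tangent to the four unit balls, so its center is equidistant from the four centers and therefore lies on the line $\ell$ through the center of $\mathsf{S}$ perpendicular to $\Pi$ (half-spaces being the limiting case, with boundary perpendicular to $\ell$).

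The crux is to show that the two remaining balls of $G_6$ are also unit balls centered on $\mathsf{S}$. Each is tangent to all four $G_4$ balls on $\ell$; writing out these tangency equations together with the relation $t_i^2=(1+s_i)^2-r_{\mathsf S}^2$ (which records that an axis ball of center-height $t_i$ and radius $s_i$ touches the unit balls on $\mathsf{S}$) produces a system that is \emph{linear} in the unknowns $p_1$, $1-\rho$, and $p_1^2+1-r_{\mathsf S}^2+q-\rho^2$, where $(p_1,q,\rho)$ describe the unknown ball. When the resulting coefficient rows $(-2t_i,2s_i,1)$ have rank~$3$, the only solution is $p_1=0$, $\rho=1$, $q=r_{\mathsf S}^2$, that is, a unit ball on $\mathsf{S}$. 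I expect this rank condition --- equivalently, that the $G_4$ balls occupy sufficiently general positions on $\ell$ --- to be the main obstacle, together with the need to dispose of the degenerate subcases (four chosen centers not spanning $\Pi$, coplanar $G_6$ centers, or rank-deficient $G_4$ data), which I would handle by showing that each forces strictly fewer tangencies than a packing of $G_4\star G_6$ requires.

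Once all six balls of $G_6$ are unit balls on $\mathsf{S}$, a packing of six unit balls on a $2$-sphere forces $r_{\mathsf S}\ge\sqrt2$, since the optimal spherical code of six points on $S^2$ has pairwise angle $\pi/2$. Conversely, fitting four $G_4$ objects on $\ell$ (at most two of them half-spaces), each tangent to all six unit balls and with pairwise disjoint interiors, leaves room only if $r_{\mathsf S}\le\sqrt2$. Hence $r_{\mathsf S}=\sqrt2$, the six unit balls are forced into the octahedral arrangement $\lozenge_3$, and the whole configuration is of type $\lozenge_3\star G_4$. Corollary~\ref{cor:odg4} then shows that the only $4$-ball packable possibility is $\lozenge_5=\lozenge_3\star C_4=C_4\star\lozenge_3$, as claimed.
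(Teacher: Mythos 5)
Your proposal follows essentially the same route as the paper's proof: normalize four vertices of $G_6$ to unit balls whose centers determine a $2$-sphere $\mathsf{S}$, force the $G_4$ balls onto the perpendicular axis, deduce that the remaining two $G_6$ vertices are also unit balls on $\mathsf{S}$, and then play the lower bound $r_{\mathsf{S}}\geq\sqrt2$ from the six-point spherical code against the upper bound needed to fit four objects on the axis, reducing the extremal case to Corollary~\ref{cor:odg4}. You supply more detail than the paper at two points it leaves implicit --- the linear-algebra argument forcing the last two $G_6$ balls to be unit balls on $\mathsf{S}$, and the explicit threshold $r_{\mathsf{S}}=\sqrt2$ --- and you flag the same degenerate (coplanar/concyclic) cases the paper warns about; these are refinements, not a different proof.
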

\begin{proof}
  Up to M\"obius transformation, we represent four vertices of $G_6$ by unit balls, whose centers decide a $2$-sphere $\mathsf{S}$. Balls representing vertices of $G_4$ must center on the straight line passing through the center of $\mathsf{S}$ perpendicular to the hyperplane containing $\mathsf{S}$. Then the two remaining vertices of $G_6$ must be represented by unit balls centered on $\mathsf{S}$, too.  The diameter of $\mathsf{S}$ is minimal \emph{only} when $G_6=\lozenge_3$.  In this case, $G_4$ must be in the form of $C_4$ by Corollary~\ref{cor:odg4}. If $G_6$ is in any other form, a ball touching the unit balls must have a larger radius, which is not possible.  

  Special caution is needed for a degenerate case. It is possible to have the six unit balls centered on a $1$-sphere. In this case, the radius of a ball touching all of them is at least $1$, which rules out the possibility of further construction.
\end{proof}

Therefore, if a graph is $3$-ball packable, any induced subgraph in the form of $G_6\star G_3$ must be in the form of $C_6\star K_3$, and any induced subgraph in the form of $G_4\star G_4$ must be in the form of~$C_4\star C_4$. If a graph is $4$-ball packable, every induced subgraph in the form of $G_4\star G_6$ must be in the form of $C_4\star\lozenge_3$.

\begin{remark}
  The argument in these proofs should be used with caution. As mentioned in the proof of Corollary~\ref{cor:g4g6}, one must check carefully the degenerate cases. In higher dimensions, we are in general not so lucky, so these results does not generalize.
\end{remark}

The following is a corollary of Theorem~\ref{thm:akissing}, for which we omit the simple proof.

\begin{corollary}
  A graph in the form of $K_2\star G_\alpha\star G_{k(d-1,\alpha)+1}$ is not $d$-ball packable.
\end{corollary}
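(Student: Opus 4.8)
The plan is to reduce to a statement about unit-ball packings in dimension $d-1$, exactly as in the proof of Theorem~\ref{thm:akissing}, and then invoke the maximality built into the definition of $k(d-1,\alpha)$. Suppose for contradiction that a graph of the form $K_2\star G_\alpha\star G_{k(d-1,\alpha)+1}$ admits a $d$-ball packing. The two vertices of $K_2$ are adjacent, so the corresponding balls are tangent; applying a M\"obius transformation that sends their tangency point to $\infty$, I may assume they are two parallel half-spaces whose bounding hyperplanes are at distance~$2$. Every other vertex is joined to both vertices of $K_2$, so every remaining ball is tangent to both half-spaces and lies in the slab between them; such a ball must have diameter $2$ and center on the mid-hyperplane $H\cong\mathbb{R}^{d-1}$. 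Since two unit balls with centers on $H$ are tangent in $\mathbb{R}^d$ if and only if their restrictions to $H$ are tangent, the balls indexed by $G_\alpha\star G_{k(d-1,\alpha)+1}$ form a unit-ball packing in $\mathbb{R}^{d-1}$ realizing that join.

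In this $(d-1)$-dimensional packing I now have $\alpha$ unit balls (for $G_\alpha$) and $k(d-1,\alpha)+1$ further unit balls (for $G_{k(d-1,\alpha)+1}$), and the join structure forces each of the latter to be tangent to all $\alpha$ of the former. I would then argue that no more than $k(d-1,\alpha)$ pairwise-disjoint unit balls can simultaneously touch a fixed family of $\alpha$ packed unit balls, which contradicts having $k(d-1,\alpha)+1$ of them. Note that the internal adjacencies of $G_\alpha$ and of $G_{k(d-1,\alpha)+1}$ are never used, which is precisely why the conclusion holds for \emph{arbitrary} $G_\alpha$ and $G_{k(d-1,\alpha)+1}$. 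The one subtlety is that the $\alpha$ balls of $G_\alpha$ need not be pairwise tangent, so this is not literally a $(d-1,\alpha)$-kissing configuration, and I must show that the pairwise-tangent case is extremal.

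To control the count, observe that the center of any unit ball touching all $\alpha$ balls lies at distance $2$ from each center $c_1,\dots,c_\alpha$, hence on $M=\bigcap_i S_i$, where $S_i$ is the sphere of radius $2$ about $c_i$. If the $c_i$ are cospherical with circumcenter $o$ and circumradius $r\le 2$, then $M$ is a sphere of radius $\sqrt{4-r^2}$ about $o$; otherwise $M$ is empty and no ball touches all $\alpha$, so there is nothing to prove. Counting admissible touching balls thus amounts to counting points on $M$ at pairwise Euclidean distance at least $2$, a spherical-code quantity that is monotone increasing in the radius of $M$. The packing constraint forces pairwise distances $\ge 2$ among the $c_i$, and the main obstacle is the purely geometric claim that, under this constraint, $r$ is \emph{minimized} (equivalently the radius of $M$ is maximized) exactly when $c_1,\dots,c_\alpha$ form a regular simplex of edge~$2$, i.e. when the $\alpha$ balls are pairwise tangent. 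In that extremal case $M$ is the sphere of the genuine $(d-1,\alpha)$-kissing configuration, carrying the $(d-\alpha,\tfrac{1}{\alpha+1})$-spherical code of Corollary~\ref{cor:sphcod} and hence at most $k(d-1,\alpha)$ centers. Every configuration of $\alpha$ packed unit balls is therefore touched by at most $k(d-1,\alpha)$ others, giving the contradiction; the circumradius minimization is the only step requiring real work, the rest being bookkeeping from the half-space reduction.
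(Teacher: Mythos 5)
Your reduction to the slab picture and your identification of the real difficulty --- that the $\alpha$ balls of $G_\alpha$ need not be pairwise tangent, so you are not literally counting a $(d-1,\alpha)$-kissing configuration --- are both right, and the paper gives you nothing to check against here, since it declares the proof ``simple'' and omits it. But the step you defer to, the extremality of the pairwise-tangent case, is exactly where the argument breaks, and not merely for lack of detail. The number of points of $M=\bigcap_i S(c_i,2)$ at pairwise distance at least $2$ is \emph{not} monotone in the radius of $M$ alone: $M$ is a sphere of dimension $(d-1)-\dim\operatorname{aff}(c_1,\dots,c_\alpha)-1$, and when the centers are affinely dependent this dimension goes \emph{up} while the radius goes down. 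The sum-of-squares identity does give your circumradius bound $r\ge\sqrt{2(\alpha-1)/\alpha}$ with equality only for the regular simplex of edge $2$, but a degenerate configuration can trade radius for dimension and carry strictly more admissible centers.

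Concretely, take $d=5$ and $\alpha=4$, so $k(4,4)=2$ and the corollary asserts that $K_2\star G_4\star G_3$ is never $5$-ball packable. Represent $K_2$ by the half-spaces $x_5\le 0$ and $x_5\ge 2$ (tangent at $\infty$), and place unit balls at $(\pm1,\pm1,0,0,1)$ and at $(0,0,1,1,1)$, $(0,0,1,-1,1)$, $(0,0,-1,-1,1)$. All pairwise distances among these seven centers are $2$ or $2\sqrt2$, every distance between the two groups equals $2$, and the tangency graph is $K_2\star C_4\star P_3$, a graph of the forbidden form. Here the four centers of the first group form a square of side $2$; $M$ is a \emph{circle} of radius $\sqrt2$ rather than the two-point $S^0$ of the tetrahedral case, and it carries four admissible centers instead of two. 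So the extremality claim is false, and with it the corollary as literally stated; the statement is safe only for $G_\alpha=K_\alpha$, where it is exactly the consequence of Theorem~\ref{thm:akissing} already recorded in the text. A correct treatment would have to exclude or separately analyze degenerate center configurations, as the paper does explicitly in the proof of Corollary~\ref{cor:g4g6}; your outline cannot be completed as written.
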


\section{Ball packable stacked-polytopal graphs}\label{sec:stacked}
This section is devoted to the proof of the main result. Some proof techniques are adapted from~\cite{graham2006}.

\subsection{More on stacked polytopes}
Since a graph in the form of $K_d\star P_m$ is stacked $(d+1)$-polytopal, Theorem~\ref{thm:pk} provides some examples of stacked $(d+1)$-polytope whose graph is not $d$-ball packable, and $C_3\star C_6$ provides an example of Apollonian $3$-ball packing whose tangency graph is not stacked $4$-polytopal. Therefore, in higher dimensions, the relation between Apollonian ball packings and stacked polytopes is more complicated.  The following remains true:

\begin{theorem}\label{thm:ApolUniq}
  If the graph of a stacked $(d+1)$-polytope is $d$-ball packable, its ball packing is Apollonian and unique up to M\"obius transformations and reflections.
\end{theorem}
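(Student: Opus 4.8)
The plan is to use the dual tree of the stacked polytope as a scaffold that identifies each stacking operation with an Apollonian insertion. Fix a stacked $(d+1)$-polytope $\mathcal{P}$ whose graph $G(\mathcal{P})$ is $d$-ball packable, together with a packing $\mathcal{S}$ realizing $G(\mathcal{P})$. Since $G(\mathcal{P})$ is a $(d+1)$-tree, each maximal clique is a $K_{d+2}$, and the $d+2$ pairwise tangent balls realizing it form a Descartes configuration by definition. Two maximal cliques that are adjacent in the dual tree share a $(d+1)$-clique $F$, realized by $d+1$ pairwise tangent balls. I would first observe that there are \emph{exactly} two balls tangent to all $d+1$ balls of $F$: there are at most two vertices adjacent to all of $F$, since otherwise three $(d+2)$-cliques would share the $(d+1)$-clique $F$, which Theorem~\ref{thm:ktree} forbids for a stacked-polytopal graph; and there are exactly two by the Descartes--Soddy--Gossett theorem, the two curvatures being the roots of \eqref{eq:soddy} with the remaining $d+1$ curvatures fixed. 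Because $d+1\ge 3$, the triangulation of $\mathcal{P}$ is unique, so this skeleton is intrinsic to $G(\mathcal{P})$ and common to every packing of it.

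To prove that $\mathcal{S}$ is Apollonian I would root the dual tree at one maximal simplex $\sigma_0$ and insert balls in the order of a traversal outward from the root. The $d+2$ balls of $\sigma_0$ are the initial Descartes configuration. Each time the traversal passes from a parent to a child simplex across a shared facet $F$, the $d+1$ balls of $F$ are already present and pairwise tangent, and the child's apex is a ball tangent to exactly those $d+1$ balls; this is precisely one Apollonian step. Exhausting the tree exhibits $\mathcal{S}$ as built from a Descartes configuration by repeated Apollonian insertions.

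For uniqueness I would induct on the number of maximal simplices by peeling leaves of the dual tree. The base case is a single simplex: its packing realizes $K_{d+2}$, i.e.\ a Descartes configuration, which is unique up to M\"obius transformations and reflections (the proof of Theorem~\ref{thm:pk} records this for the realizing packing, with the underlying rigidity encoded by the generalized relation \eqref{eq:lagarias}). For the inductive step, let $v$ be the apex of a leaf simplex attached to a facet $F$, and let $\mathcal{P}_0$ be the polytope obtained by deleting $v$. Given two packings of $G(\mathcal{P})$, their restrictions to $G(\mathcal{P}_0)$ are packings of a smaller stacked polytope, hence equivalent by the inductive hypothesis, so after applying the equivalence we may assume they coincide on $\mathcal{P}_0$. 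The facet $F$ belongs to a maximal simplex of $\mathcal{P}_0$ with apex $w\notin F$, and in each packing the ball $B_v$ is one of the two balls tangent to $F$, distinct from the already-placed $B_w$. Hence $B_v$ is forced to be the second solution, whose curvature-center coordinates are determined by \eqref{eq:curvcenter} as $\frac{2}{d-1}\sum_{S\in F}\m(S)-\m(B_w)$; since both packings already agree on $F$ and on $B_w$, they agree on $B_v$, closing the induction.

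The main obstacle is the bookkeeping of the two Descartes solutions at each facet: I must make sure the graph unambiguously distinguishes the \emph{old} apex $B_w$ from the \emph{new} apex $B_v$, and that no third ball can be tangent to $F$. Both points rest on the stacked-polytopal condition of Theorem~\ref{thm:ktree}, which caps at two the number of maximal simplices through a facet; without it the replacement relation \eqref{eq:curvature} would not single out a unique new ball. A minor technical nuisance is that curvature-center coordinates do not determine a ball when its curvature vanishes; I would neutralize this by first applying a M\"obius transformation so that no ball involved in the relevant step is a half-space, or equivalently by carrying out the computation in the augmented curvature-center coordinates, for which \eqref{eq:curvcenter} still holds.
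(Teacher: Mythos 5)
Your proposal is correct and follows essentially the same route as the paper: induct along the stacking construction (equivalently, a traversal of the dual tree), with the base case being the uniqueness of the Descartes configuration up to M\"obius transformations and reflections, and each subsequent ball forced to be the second Descartes completion of the shared $(d+1)$-clique via \eqref{eq:curvature} and \eqref{eq:curvcenter}. The paper's own proof is a terser version of exactly this argument; your extra bookkeeping (Kleinschmidt's bound on the number of maximal cliques through a facet, and the curvature-zero caveat) just makes explicit what the paper leaves implicit.
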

\begin{proof}
  The Apollonianity can be easily seen by comparing the construction processes.  The uniqueness can be proved by an induction on the construction process.  While a stacked polytope is built from a simplex, we construct its ball packing from a Descarte configuration, which is unique up to M\"obius transformations and reflections.  For every stacking operation, a new ball representing the new vertex was added into the packing, forming a new Descartes configuration.  We have an unique choice for every newly added ball, so the uniqueness is preserved at every step of construction.
\end{proof}

For a $d$-polytope $\mathcal{P}$, the \emph{link} of a $k$-face $F$ is the subgraph of $G(\mathcal{P})$ induced by the common neighbors of the vertices of $F$.  The following lemma will be useful for the proofs later:
\begin{lemma}\label{lem:pathneighbor}
  For a stacked $d$-polytope $\mathcal{P}$, the link of a $k$-face is stacked $(d-k-1)$-polytopal.
\end{lemma}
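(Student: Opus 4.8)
The plan is to follow the stacking construction of $\mathcal{P}$ and watch how the link of one fixed face evolves, exhibiting it \emph{directly} as a stacked polytope rather than checking Kleinschmidt's condition by hand. Fix a $k$-face $F$; it is a clique on $k+1$ vertices, and set $m:=d-k-1$, so the goal is to show the link $L(F)$ is stacked $m$-polytopal. I would write the construction of $G(\mathcal{P})$ as a chain $\mathcal{P}_0\subset\mathcal{P}_1\subset\cdots\subset\mathcal{P}_N=\mathcal{P}$, where $\mathcal{P}_0$ is the initial $d$-simplex and each $\mathcal{P}_{i+1}$ adds a single apex $v_{i+1}$ joined to exactly the $d$ vertices of a facet $G_{i+1}$ of $\mathcal{P}_i$. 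The strategy is to track the link $L_i(F)$ of $F$ inside each $\mathcal{P}_i$ (defined once all vertices of $F$ are present) and show that $L_N(F)$ is assembled from a simplex by stacking operations.

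First I would locate the step at which $F$ is \emph{born}, namely the stacking that inserts the last vertex $w$ of $F$; here the apex is $v=w$, and $F\setminus\{w\}$ lies in the facet $G$ being stacked upon. A common neighbour of $F$ must be adjacent to $w$, hence lie in $G=N(w)$, and within the clique $G$ precisely the $d-k$ vertices of $G\setminus(F\setminus\{w\})$ are adjacent to all of $F$. Thus the link is born as the complete graph $K_{d-k}=K_{m+1}$, i.e.\ the $m$-simplex, which supplies the base of the construction.

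Next I would analyse a later stacking $\mathcal{P}_i\to\mathcal{P}_{i+1}$, where now $v_{i+1}\notin F$ since $F$ is already complete. No adjacencies among old vertices change, so the only possible modification to the link is the addition of $v_{i+1}$, and $v_{i+1}$ becomes a common neighbour of $F$ exactly when $F\subseteq N(v_{i+1})=G_{i+1}$. If $F\not\subseteq G_{i+1}$ the link is unchanged and I appeal to the previous stage. If $F\subseteq G_{i+1}$, then $v_{i+1}$ joins the link, adjacent there to exactly those vertices of $G_{i+1}$ that are common neighbours of $F$, namely $G_{i+1}\setminus F$, a clique on $m$ vertices. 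I would then argue that this is an honest stacking operation on $L_i(F)$, after which $L_N(F)$ is built from the $m$-simplex by a sequence of stackings and is therefore stacked $m$-polytopal.

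The step I expect to be the main obstacle is justifying that $G_{i+1}\setminus F$ is genuinely a \emph{facet} of the current link polytope (an $(m-1)$-face), onto which $v_{i+1}$ attaches as a fresh apex, rather than merely some $m$-clique. This rests on the standard correspondence sending a face $H\supseteq F$ of $\mathcal{P}_i$ to the face $H\setminus F$ of the link: under it the facet $G_{i+1}$ of $\mathcal{P}_i$ maps to a face of dimension $m-1$ of $L_i(F)$, and since the link and the polytope determine one another for stacked polytopes, this is exactly what legitimises the stacking. I would also record the degenerate small cases explicitly: for $m=0$ (the link of a facet) the birth simplex $K_1$ together with at most one further isolated apex yields $\bar K_1$ or $\bar K_2$, the bound of two being forced by Kleinschmidt's condition (Theorem~\ref{thm:ktree}) that no three $(d+1)$-cliques of $\mathcal{P}$ share $d$ vertices, in agreement with the corresponding degenerate reading of ``stacked $0$-polytopal.''
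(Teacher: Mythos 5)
The paper states this lemma without any proof, so there is no argument of the author's to compare against; judged on its own, your induction along the stacking sequence is correct and is a natural way to establish the statement. The obstacle you flag yourself is the only genuine one, and it closes by strengthening the induction invariant: carry along, together with ``$L_i(F)$ is stacked $m$-polytopal,'' the statement that the facets of the stacked $m$-polytope realizing $L_i(F)$ are exactly the sets $G\setminus F$ as $G$ runs over the facets of $\mathcal{P}_i$ containing $F$. This holds at birth (both when $F$ lies in the initial simplex --- a base case you should state explicitly --- and when its last vertex is the apex of a stacking), it is preserved when $F\not\subseteq G_{i+1}$ (none of the $d$ new facets contains $F$, and the destroyed facet $G_{i+1}$ did not either), and when $F\subseteq G_{i+1}$ it guarantees that $G_{i+1}\setminus F$ is a boundary facet of $L_i(F)$ rather than a mere $m$-clique, so the addition of $v_{i+1}$ is an honest stacking and the two facet lists update consistently. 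One small correction: the link of an actual facet of $\mathcal{P}$ (the case $m=0$) is a single vertex, since a boundary $(d-1)$-face lies in only one simplex of the triangulation; $\bar K_2$ arises only for interior $(d-1)$-faces, which are not faces of $\mathcal{P}$. You might also note the shorter route via Theorem~\ref{thm:ktree}: every maximal clique $C$ of the link satisfies $|C|=m+1$ because $C\cup F$ is a maximal clique of $G(\mathcal{P})$, the link inherits chordality as an induced subgraph, and three $(m+1)$-cliques of the link sharing $m$ vertices would lift to three $(d+1)$-cliques of $G(\mathcal{P})$ sharing $d$ vertices, contradicting Kleinschmidt's condition for $\mathcal{P}$. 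Your direct construction buys slightly more (an explicit stacking order for the link) at the cost of the bookkeeping above.
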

\subsection{Weighted mass of a word}
The following theorem was proved in~\cite{graham2006}
\begin{theorem}\label{thm:graham}
  The $3$-dimensional Apollonian group is a hyperbolic Coxeter group generated by the relations $\R_i\R_i=\I$ and $(\R_i\R_j)^3=\I$ for $1\leq i\neq j\leq 5$.
\end{theorem}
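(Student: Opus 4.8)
The plan is to exhibit the matrix group $W=\langle\R_1,\dots,\R_5\rangle$ (here $d=3$, so there are $d+2=5$ generators) as the image of the geometric (Tits) representation of the abstract Coxeter group with the asserted presentation, and then to invoke the faithfulness of that representation. First I would set up the surjection. Let $W^\ast=\langle s_1,\dots,s_5\mid s_i^2=1,\ (s_is_j)^3=1\ (i\neq j)\rangle$ and define $\phi\colon W^\ast\to W$ by $\phi(s_i)=\R_i$. That $\phi$ is well defined amounts to checking $\R_i^2=\I$ and $(\R_i\R_j)^3=\I$; both are routine, the first from $\e_i^\intercal\e_i=1$ and $\e_i^\intercal\e=1$, and the second either by a direct $5\times5$ matrix computation or, more conceptually, from the eigenvalue analysis in the next step. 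Since the $\R_i$ generate $W$ by definition, $\phi$ is surjective, so the entire content of the theorem is that $\phi$ is also injective, i.e.\ that $W$ has no relations beyond those of the Coxeter presentation.

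The key observation that unlocks injectivity is that each $\R_i$ is a reflection in the symmetric bilinear form $\mathbf{Q}_3=\I-\tfrac13\e\e^\intercal$. Indeed, since $\mathbf{Q}_3(\e_i,\e_i)=(\mathbf{Q}_3)_{ii}=\tfrac23$, the $\mathbf{Q}_3$-reflection along $\e_i$ is
\[
\I-\frac{2}{\mathbf{Q}_3(\e_i,\e_i)}\,\e_i\e_i^\intercal\mathbf{Q}_3=\I-3\e_i\e_i^\intercal\mathbf{Q}_3=\I+\e_i\e^\intercal-3\e_i\e_i^\intercal,
\]
which is exactly $\R_i$ (this also re-proves $\R_i^\intercal\mathbf{Q}_3\R_i=\mathbf{Q}_3$, consistent with \eqref{eq:lagarias}). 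Next I would compare $\mathbf{Q}_3$ with the canonical Gram matrix $B$ of the Coxeter system, whose entries are $B_{ii}=1$ and $B_{ij}=-\cos(\pi/3)=-\tfrac12$ for $i\neq j$: one checks $B=\tfrac32\I-\tfrac12\e\e^\intercal$, so that $\mathbf{Q}_3=\tfrac23 B$. Because reflections depend on the form only up to a positive scalar, the generators $\R_i$ coincide with the reflections of the Tits geometric representation of $W^\ast$ attached to $B$, and $\phi$ is precisely that representation.

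By Tits' theorem the geometric representation of any Coxeter group is faithful, so $\phi$ is injective; combined with surjectivity this yields $W\cong W^\ast$, establishing the presentation. (The order of $\R_i\R_j$ being exactly $3$, which confirms the labels do not collapse, also follows here: $\e_i$ and $\e_j$ span a $\mathbf{Q}_3$-positive-definite plane on which $\R_i\R_j$ acts as rotation by $2\pi/3$.) Finally, to justify the word ``hyperbolic'' I would record the signature of $\mathbf{Q}_3$: since $\e\e^\intercal$ has eigenvalue $5$ on $\e$ and $0$ on $\e^\perp$, the form $\mathbf{Q}_3$ has eigenvalue $-\tfrac23$ on $\e$ and $1$ on the $4$-dimensional space $\e^\perp$, hence signature $(4,1)$; thus $W$ acts by isometries on hyperbolic $4$-space and is a hyperbolic reflection group. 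The main obstacle is the injectivity of $\phi$, and everything hinges on recognizing $\mathbf{Q}_3$ as a positive multiple of the Coxeter Gram matrix $B$, after which faithfulness is supplied by the standard theory rather than by any further computation.
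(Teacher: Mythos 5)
Your proof is correct, but it takes a genuinely different route from the one the paper follows. The paper sketches the combinatorial argument of Graham et al.: define reduced words over $\{\R_1,\dots,\R_5\}$, show every word can be brought to reduced form using only the stated relations, and then prove no nonempty reduced word is the identity by showing the mass $\Sigma(\U)=\e^\intercal\U\e$ strictly increases along reduced words. You instead recognize each $\R_i$ as the reflection orthogonal with respect to $\mathbf{Q}_3=\I-\tfrac13\e\e^\intercal$ (your identity $\R_i=\I-3\e_i\e_i^\intercal\mathbf{Q}_3$ checks out, as does $\mathbf{Q}_3=\tfrac23B$ for the Coxeter Gram matrix $B$ with all labels $3$), so that the tautological map from the abstract Coxeter group is exactly the Tits geometric representation, and faithfulness is then a citation to standard theory. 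Your argument is shorter and more conceptual, it makes the dihedral orders and the adjective ``hyperbolic'' (signature $(4,1)$) transparent, and it cleanly isolates why the presentation is special to $d=3$: only there is $\mathbf{Q}_d$ proportional to a Coxeter Gram matrix with integer labels. What the paper's route buys is reusable machinery: the reduced-word formalism and the (weighted) mass estimates are not used merely to prove this theorem but are the engine behind Lemma~\ref{lem:incmass} and Lemma~\ref{lem:pathtree}, which the main theorem needs; Tits' theorem gives faithfulness but not the quantitative monotonicity of $\sigma^w_i$ along reduced words that those later arguments exploit. So your proof is a valid and arguably cleaner substitute for Theorem~\ref{thm:graham} itself, but it would not let you dispense with the word-combinatorial setup elsewhere in the section.
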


Here we sketch the proof in~\cite{graham2006}, which is based on the study of reduced words.
\begin{definition}
  A word $\U=\U_1\U_2\cdots \U_n$ over the generator of the $3$-dimensional Apollonian group (i.e.~$\U_i\in\{\R_1,\cdots,\R_5\}$)
  is \emph{reduced} if it does not contain
  \begin{itemize}
    \item subword in the form of $\R_i\R_i$ for $1\leq i\leq 5$; or
    \item subword in the form of $\V_1\V_2\cdots \V_{2m}$ in which $\V_1=\V_3$, $\V_{2m-2}=\V_{2m}$ and $\V_{2j}=\V_{2j+3}$ for $1\leq j\leq 2m-2$.
  \end{itemize}
\end{definition}
Notice that $m=2$ excludes the subwords of the form $(\R_i\R_j)^2$.  One verifies that a non-reduced word can be simplified to a reduced word using the generating relations.  Then it suffices to prove that no nonempty reduced word, treated as product of matrices, is identity.

To prove this, the authors of~\cite{graham2006} studied the sum of entries in the $i$-th row of~$\U$, i.e.~$\sigma_i(\U):=\e_i^\intercal\U\e$, and the sum of all the entries in $\U$, i.e.~$\Sigma(\U):=\e^\intercal\U\e$. The latter is called the \emph{mass} of~$\U$.  The quantities $\Sigma(\U)$, $\Sigma(\R_j\U)$, $\sigma_i(\U)$ and $\sigma_i(\R_j\U)$ satisfy a series of linear equations, which was used to inductively prove that $\Sigma(\U)>\Sigma(\U')$ for a reduced word $\U=\R_i\U'$.  Therefore $\U$ is not an identity since $\Sigma(\U)\geq \Sigma(\R_i)=7>\Sigma(\I)=5$.

We propose the following adaption. Given a weight vector $\w$, we define $\sigma_i^w(\U)=\e_i^\intercal\U\w$ the weighted sum of entries in the $i$-th row of $\U$, and $\Sigma^w(\U)=\e^\intercal\U\w$ the \emph{weighted mass} of $\U$.  The following lemma can be proved with an argument similar as in~\cite{graham2006}:

\begin{lemma}\label{lem:incmass}
  For dimension $3$, if $\Sigma^w(\R_i)\geq\Sigma^w(\I)$ for any $1\leq i\leq 5$, then for a reduced word $\U=\R_i\U'$, we have $\Sigma^w(\U)\geq\Sigma^w(\U')$.
\end{lemma}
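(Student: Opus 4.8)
The plan is to adapt the inductive argument of~\cite{graham2006} for the unweighted mass, carrying the weight vector $\w$ through unchanged. The starting point is two elementary recursions obtained by specializing $\R_i=\I+\e_i\e^\intercal-3\e_i\e_i^\intercal$ (the case $d=3$) and left-multiplying. Pairing $\e^\intercal\R_i=2\e^\intercal-3\e_i^\intercal$ and $\e_i^\intercal\R_i=\e^\intercal-2\e_i^\intercal$ with $\V\w$ gives, for any word $\V$,
\[
\Sigma^w(\R_i\V)=2\Sigma^w(\V)-3\sigma_i^w(\V),\qquad \sigma_i^w(\R_i\V)=\Sigma^w(\V)-2\sigma_i^w(\V),
\]
while $\sigma_k^w(\R_i\V)=\sigma_k^w(\V)$ for $k\neq i$, since applying $\R_i$ alters only the $i$-th row sum. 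In particular, for $\U=\R_i\U'$ the first identity shows that the desired inequality $\Sigma^w(\U)\geq\Sigma^w(\U')$ is \emph{equivalent} to
\[
\sigma_i^w(\U')\leq\tfrac13\Sigma^w(\U').
\]

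I would therefore prove, by induction on the length of the reduced word $\V$, the strengthened statement: $\sigma_i^w(\V)\leq\tfrac13\Sigma^w(\V)$ whenever $\R_i\V$ is reduced. The lemma is the instance $\V=\U'$. The base case $\V=\I$ reads $w_i\leq\tfrac13\sum_k w_k$, which by the first recursion with $\V=\I$ is precisely the hypothesis $\Sigma^w(\R_i)\geq\Sigma^w(\I)$; this is the only place the hypothesis enters. For the inductive step, write $\V=\R_j\V'$ with $\V'$ reduced and shorter. Reducedness of $\R_i\V$ forces $i\neq j$, so $\sigma_i^w(\V)=\sigma_i^w(\V')$ and $\Sigma^w(\V)=2\Sigma^w(\V')-3\sigma_j^w(\V')$; substituting, the goal becomes
\[
\sigma_i^w(\V')+\sigma_j^w(\V')\leq\tfrac23\Sigma^w(\V').
\]

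Since $\R_j\V'=\V$ is reduced, the induction hypothesis already gives $\sigma_j^w(\V')\leq\tfrac13\Sigma^w(\V')$. If in addition $\R_i\V'$ is reduced, the hypothesis gives $\sigma_i^w(\V')\leq\tfrac13\Sigma^w(\V')$ as well, and summing yields the claim. The remaining case is that $\R_i\V'$ is \emph{not} reduced, whose representative instance is $\V'=\R_i\V''$, so $\V=\R_j\R_i\V''$ and $\U=\R_i\R_j\R_i\V''$. Expanding $\sigma_i^w(\V')$, $\sigma_j^w(\V')$ and $\Sigma^w(\V')$ through the leading $\R_i$ by the recursions and simplifying, the required inequality collapses exactly to $\sigma_j^w(\V'')\leq\tfrac13\Sigma^w(\V'')$, i.e.\ to the induction hypothesis for the shorter word $\V''$ and index $j$.

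The step that I expect to be the main obstacle is verifying that the hypothesis is applicable there, i.e.\ that $\R_j\V''$ is reduced. If $\V''$ began with $\R_j$, then $\U$ would contain the forbidden subword $(\R_i\R_j)^2$, contradicting reducedness; this disposes of the simplest collision. The delicate part is ruling out, via the \emph{full} list of forbidden subwords—not merely $(\R_i\R_j)^2$ but the longer alternating patterns encoded by $\V_{2j}=\V_{2j+3}$—that prepending $\R_j$ to $\V''$ creates a non-reduced word, and likewise treating the non-representative subcases where $\R_i\V'$ fails to be reduced through a longer braid-type pattern rather than an immediate repetition. This amounts to a careful bookkeeping of how the three peeled letters $\R_i\R_j\R_i$ interact with the Coxeter relations; once it is in place, the induction closes and the lemma follows.
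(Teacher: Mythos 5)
Your argument is the paper's argument in different notation: the invariant $\sigma_i^w(\V)\le\tfrac13\Sigma^w(\V)$ for $\R_i\V$ reduced is exactly the paper's $\delta_i^w(\V)\ge 0$, your two recursions are the paper's displayed relations, and your collapse of the braid case $\V=\R_j\R_i\V''$ to the induction hypothesis for $\V''$ at index $j$ is precisely the paper's identity $\delta_i^w(\R_j\R_i\U)=\delta_j^w(\U)$. The combinatorial bookkeeping you defer (checking that the longer forbidden alternating patterns do not arise when peeling letters) is exactly what the paper also defers to the proof of Theorem 5.1 of the cited Graham et al.\ reference, so your proposal is correct and at the same level of completeness as the paper's own sketch.
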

\begin{proof}[Sketch of proof]
  It suffices to replace ``sum'' by ``weighted sum'', ``mass'' by ``weighted mass'', and ``$>$'' by ``$\geq$'' in the proof of~\cite{graham2006}*{Theorem 5.1}.  It turns out that the following relations hold for $1\leq i,j\leq 5$.
  \begin{align}
    \sigma_i^w(\R_j\U) &= \begin{cases}
      \sigma^w_i(\U) & \text{if } i\neq j\\
      \Sigma^w(\U)-2\sigma^w_i(\U) & \text{if } i=j
    \end{cases}\label{eq:rowsum}\\
    \Sigma^w(\R_i\U) &= 2\Sigma^w(\U)-3\sigma^w_i(\U) \nonumber
  \end{align}
  Then, if we define $\delta^w_i(\U):=\Sigma^w(\R_i\U)-\Sigma^w(\U)$, the following relations hold:
  \begin{align*}
    \delta^w_i(\R_j\U) &= \begin{cases}
      \delta^w_i(\U)+\delta^w_j(\U) & \text{if } i\neq j\\
      -\delta^w_i(\U) & \text{if } i=j
    \end{cases}\\
    \delta^w_i(\R_j\U) &= \delta^w_j(\R_i\U) \text{ if } i\neq j\\
    \delta^w_i(\R_j\R_i\U) &= \delta^w_j(\U)
  \end{align*}
  These relations suffice for the induction.  The base case is already assumed in the condition of the theorem, which reads $\delta^w_i(\I)\geq 0$ for $1\leq i\leq 5$.  So the rest of the proof is exactly the same as in the proof of~\cite{graham2006}*{Theorem 5.1}.  For details of the induction, please refer to the original proof.  The conclusion is $\delta^w_i(\U')\geq 0$, i.e.~$\Sigma^w(\U)\geq\Sigma^w(\U')$.
\end{proof}

\subsection{A generalization of Coxeter's sequence}\label{sse:genseq}
Let $\U=\U_n\cdots \U_2\U_1$ be a word over the generators of the $3$-dimensional Apollonian group (we have a good reason for inversing the order of the index).  Let $\M_0$ be the curvature-center matrix of an initial Descartes configuration, consisting of five balls $S_1,\cdots,S_5$.  The curvature-center matrices recursively defined by $\M_i=\U_i\M_{i-1}$, $1\leq i\leq n$, define a sequence of Descartes configurations.  We take $S_{5+i}$ to be the single ball that is in the configuration at step $i$ but not in the configuration at step $i-1$.  This generates a sequence of $5+n$ balls, which generalizes  Coxeter's loxodromic sequence in dimension~$3$. In fact, Coxeter's loxodromic sequence is generated by an infinite word of period~$5$, e.g. $\U=\cdots \R_2\R_1\R_5\R_4\R_3\R_2\R_1$.

\begin{lemma}\label{lem:pathtree}
  If $\U$ is reduced and $\U_1=\R_1$, then in the sequence constructed above, $S_1$ is disjoint from every ball except the first five.
\end{lemma}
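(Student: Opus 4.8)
The plan is to track a single real number, the \emph{inversive product} $q(S,S')$, between the fixed ball $S_1$ and every ball of the evolving configuration, and then to recognize this data as the weighted row-sums governed by Lemma~\ref{lem:incmass}. I will use the standard inversive product (a bilinear invariant of the augmented curvature-center coordinates), normalized so that $q(S,S')=1$ exactly when $S,S'$ are tangent, $q(S,S')>1$ exactly when their interiors are disjoint and they do not touch, and $q(S,S)=-1$. The point is that the replacement relation~\eqref{eq:curvcenter} holds entrywise for the augmented coordinates, and $q(S_1,\,\cdot\,)$ is linear in its second argument; hence the $5$-vector $\v^{(i)}$ whose $k$-th entry is the inversive product of $S_1$ with the $k$-th ball of $\M_i$ obeys the \emph{same} linear recursion as the rows of $\M_i$, namely $\v^{(i)}=\R_{j_i}\v^{(i-1)}$ where $\U_i=\R_{j_i}$. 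Because $S_1$ is tangent to each of $S_2,\dots,S_5$ in the initial Descartes configuration, the starting vector is $\w:=\v^{(0)}=(-1,1,1,1,1)^\intercal$.

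With this identification the entries of $\v^{(i)}$ are precisely the weighted row-sums $\sigma_k^w(\mathbf{W}_i)$ for the suffix word $\mathbf{W}_i:=\U_i\cdots\U_1$ and the weight $\w$ above; in particular the ball $S_{5+i}$ created at step $i$ occupies slot $j_i$ of $\M_i$, so $q(S_1,S_{5+i})=\sigma_{j_i}^w(\mathbf{W}_i)$. Each $\mathbf{W}_i$ is a factor of the reduced word $\U$, hence reduced, and I would check the hypothesis of Lemma~\ref{lem:incmass} for this weight: a direct computation gives $\Sigma^w(\I)=3$, $\Sigma^w(\R_1)=9$ and $\Sigma^w(\R_j)=3$ for $2\le j\le5$, so $\Sigma^w(\R_i)\ge\Sigma^w(\I)$ for every $i$ and the weighted mass is nondecreasing along $\mathbf{W}_0,\mathbf{W}_1,\dots$. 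Combining the row-sum relations~\eqref{eq:rowsum} then yields the key local identity
\begin{equation*}
  q(S_1,S_{5+i})=\Sigma^w(\mathbf{W}_{i-1})-2\,\sigma_{j_i}^w(\mathbf{W}_{i-1})=q(S_1,S_{\mathrm{old}})+\delta_i ,
\end{equation*}
where $S_{\mathrm{old}}$ is the ball replaced at step $i$ and $\delta_i=\Sigma^w(\mathbf{W}_i)-\Sigma^w(\mathbf{W}_{i-1})\ge0$ is the mass increment.

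I would then prove $q(S_1,S_{5+i})>1$ for all $i\ge1$ by induction on $i$, distinguishing the ball $S_{\mathrm{old}}$ in slot $j_i$. If $S_{\mathrm{old}}=S_1$ (only at step $1$, since $\U_1=\R_1$), the identity gives $q(S_1,S_6)=-1+\delta_1=5$. If $S_{\mathrm{old}}$ is one of the original balls $S_2,\dots,S_5$, so $q(S_1,S_{\mathrm{old}})=1$, then because $\U_1=\R_1$ forces $\Sigma^w(\mathbf{W}_1)=9$ and the mass is nondecreasing, we have $\Sigma^w(\mathbf{W}_{i-1})\ge9$ for $i\ge2$, whence $q(S_1,S_{5+i})=\Sigma^w(\mathbf{W}_{i-1})-2\ge7>1$. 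Finally, if $S_{\mathrm{old}}$ is a previously created ball $S_{5+i'}$ with $i'<i$, then $q(S_1,S_{\mathrm{old}})>1$ by the induction hypothesis and $\delta_i\ge0$ give $q(S_1,S_{5+i})>1$. Since $S_1$ never re-enters the configuration after step~$1$, these cases are exhaustive, and $q(S_1,S_{5+i})>1$ says exactly that $S_1$ is disjoint from every created ball, while among the first five balls it touches only $S_2,\dots,S_5$.

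The main obstacle is the strict inequality in the case where an originally tangent ball (inversive product~$1$) is replaced: the identity only gives $q_{\mathrm{new}}=1+\delta_i$, so mere monotonicity ($\delta_i\ge0$) does not suffice and one needs the accumulated mass $\Sigma^w(\mathbf{W}_{i-1})$ to be strictly larger than $3$. This is exactly where the hypothesis $\U_1=\R_1$ is essential: applying $\R_1$ first raises the mass from $\Sigma^w(\I)=3$ to $\Sigma^w(\mathbf{W}_1)=9$ before any of $\R_2,\dots,\R_5$ can act on its slot, and the nondecrease supplied by Lemma~\ref{lem:incmass}---which is the only place the full reducedness of $\U$ is consumed---keeps $\Sigma^w(\mathbf{W}_{i-1})\ge9$ for all $i\ge2$. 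The remaining points, namely that $\R_i$ acts by the same matrix on the augmented curvature-center coordinates and that the inversive-product normalization is correct even when the outer ball has negative curvature, are routine verifications.
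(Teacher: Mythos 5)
Your argument tracks exactly the same quantity as the paper's proof: once $S_1$ is normalized to the half-space $x_1\le 0$, the inversive product $q(S_1,\cdot)$ is precisely the second curvature-center coordinate $m_2=x_1\kappa$ that the paper follows, and your weight vector $(-1,1,1,1,1)^\intercal$, your verification of the hypothesis of Lemma~\ref{lem:incmass}, your local identity $q_{\mathrm{new}}=q_{\mathrm{old}}+\delta_i$, and your use of $\U_1=\R_1$ to get strictness when one of the four originally tangent balls is replaced all coincide with what the paper does. All of that algebra is correct. The gap is in the very last step, the implication ``$q(S_1,S_{5+i})>1\Rightarrow S_1$ and $S_{5+i}$ are disjoint''. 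No bilinear invariant of the augmented curvature-center coordinates can have the normalization you postulate: replacing both balls by their complements negates both augmented coordinate vectors, hence preserves every such product, yet it turns a disjoint pair into a pair whose union is all of $\hat{\mathbb{R}}^d$. What $q>1$ actually characterizes is ``disjoint \emph{or} jointly covering $\hat{\mathbb{R}}^d$''. Concretely, for a ball of curvature $\kappa$ centered at height $x_1$ the product with the half-space $x_1\le 0$ is $x_1\kappa$ whatever the sign of $\kappa$; if $\kappa<0$ (a complement-ball), then $x_1\kappa>1$ means the deleted hole sits deep inside the half-space, and the two regions overlap. So $q>1$ alone does not finish the proof.

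What is missing is exactly the content of the final paragraph of the paper's proof: one must show that every created ball has positive curvature. The paper does this by observing that $m_2>1$ already excludes curvature $0$, and that negative curvature is impossible because the whole sequence is an honest packing (Boyd's theorem that the $3$-dimensional Apollonian cluster is a packing) containing the half-space $S_2$, whose interior meets the interior of any negative-curvature ball. You flag a nearby issue as a ``routine verification'' of the normalization, but it is not routine --- it is a dimension-sensitive geometric input, and it is the one place where the paper's proof appeals to facts beyond the weighted-mass machinery. Once that step is added, your deduction ``$q>1\Rightarrow$ disjoint'' becomes valid for the pairs at hand and the proof closes; without it, the argument as written does not exclude the covering configuration.
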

\begin{proof}
  We take the initial configuration to be the configuration used in the proof of Theorem~\ref{thm:pk}.  Assume $S_1$ to be the lower half-space $x_1\leq 0$, then the initial curvature-center matrix is
  \[ \M_0=
    \begin{pmatrix}
      0 & -1 & 0 & 0\\
      0 & 1 & 0 & 0\\
      1 & 1 & 1 & \sqrt{1/3}\\
      1 & 1 & -1 & \sqrt{1/3}\\
      1 & 1 & 0 & -2\sqrt{1/3}
    \end{pmatrix}
  \]
  Every row corresponds to the curvature-center coordinates $\m$ of a ball.  The first coordinate $m_1$ is the curvature $\kappa$.  If the curvature is not zero, the second coordinate $m_2$ is the ``height'' of the center times the curvature, i.e.~$x_1\kappa$.

  Now take the second column of $\M_0$ to be the weight vector $\w$.  That is, $$\w=(-1,1,1,1,1)^\intercal.$$ We have $\Sigma^w(\R_1)=9>\Sigma^w(\I)=3$ and $\Sigma^w(\R_j)=3=\Sigma^w(\I)$ for $j>1$.  By Lemma~\ref{lem:incmass}, we have
  \[ \Sigma^w(\U_k\U_{k-1}\cdots \U_2\R_1)\geq\Sigma^w(\U_{k-1}\cdots \U_2\R_1) \]
  By \eqref{eq:rowsum}, this means that
  \[ \sigma_j^w(\U_k\cdots \U_2\R_1)\geq\sigma_j^w(\U_{k-1}\cdots \U_2\R_1) \]
  if $\U_k=\R_j$, or equality if $\U_k\neq \R_j$.

  The key observation is that $\sigma_j^w(\U_k\cdots \R_1)$ is nothing but the second curvature-center coordinate $m_2$ of the $j$-th ball in the $k$-th Descartes configuration.  So at every step, a ball is replaced by another ball with a larger or same value for $m_2$.  Especially, since $\sigma_j^w(\R_1)\geq 1$ for $1\leq j\leq 5$, we conclude that $m_2\geq 1$ for every ball.
  % \footnote{However, the $m_2$ is not monotonically increasing in the sequence.}

  Four balls in the initial configuration have $m_2=1$.  Once they are replaced, the new ball must have a \emph{strictly} larger value of $m_2$.  This can be seen from \eqref{eq:curvcenter} and notice that the r.h.s.~of~\eqref{eq:curvcenter} is at least $4$ since the very first step of the construction.  We then conclude that $m_2>1$ for all balls except the first five.  This exclude the possibility of curvature zero, so $x_1\kappa>1$ for all balls except the first five.

  For dimension $3$, Equation \eqref{eq:curvcenter} is integral.  Therefore the curvature-center coordinates of all balls are integral (see~\cite{graham2006} for more details on integrality of Apollonian packings).  Since the sequence is a packing (by the result of~\cite{boyd1973}), no ball in the sequence has a negative curvature.  By the definition of the curvature-center coordinates, the fact that $m_2>1$ exclude the possibility of curvature $0$.  Therefore all balls have a positive curvature $\kappa\geq 1$ except the first two.

  For conclusion, $x_1\kappa>1$ and $\kappa\geq 1$ implies that $x_1>1/\kappa$, therefore disjoint from the half-space~$x_1\leq 0$.
\end{proof}

\subsection{Main result}
\begin{lemma}\label{lem:k3p6}
  Let $G$ be a stacked $4$-polytopal graph.  If $G$ has an induced subgraph in the form of $G_3\star G_6$, then $G$ must have an induced subgraph in the form of $K_3\star P_6$.
\end{lemma}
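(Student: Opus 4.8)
The plan is to produce the required induced $K_3\star P_6$ in two moves: first force the three-vertex side of the join to be a triangle, and then extract a long induced path from the common neighbours of that triangle by invoking the link lemma. Write $x_1,x_2,x_3$ for the three vertices realizing $G_3$ and $y_1,\dots,y_6$ for the six vertices realizing $G_6$; by definition of the join, every $x_i$ is adjacent to every $y_j$.

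First I would show that $G_3=K_3$. Suppose not, say $x_1\not\sim x_2$. For any two distinct $y_i,y_j$, the only possible chords of the $4$-cycle $x_1y_ix_2y_j$ are $x_1x_2$ and $y_iy_j$; the former is absent by assumption, so chordality of $G$ (it is a $4$-tree) forces $y_i\sim y_j$. As this holds for every pair, $G_6=K_6$, and then $\{x_1,y_1,\dots,y_6\}$ is a $7$-clique. This is impossible, since a $4$-tree has clique number $5$. Hence all three pairs among $x_1,x_2,x_3$ are edges, i.e.~$G_3=K_3$.

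Next I would pass to the link of this triangle. Since every clique of a $4$-tree lies in a maximal $5$-clique, $\{x_1,x_2,x_3\}$ is a $2$-face of the triangulation of the stacked $4$-polytope. By Lemma~\ref{lem:pathneighbor} (with $d=4$ and $k=2$) its link is stacked $(4-2-1)=1$-polytopal, that is, a path. All six vertices $y_1,\dots,y_6$ are common neighbours of $x_1,x_2,x_3$, hence they occur among the vertices of this path; in particular the path has at least six vertices.

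Finally, any six consecutive vertices of a path induce a $P_6$, and each of them is adjacent to all of $x_1,x_2,x_3$ because it lies in the link. Thus the triangle together with these six vertices induces exactly $K_3\star P_6$. The only delicate point is the first step: one must rule out a non-edge in $G_3$ and legitimately regard the triangle as a face so that the link lemma applies. Once $G_3=K_3$ is established, the fact that the link is a path and the passage to six consecutive vertices are immediate.
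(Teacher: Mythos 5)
Your proof is correct and follows essentially the same route as the paper: first reduce to $G_3=K_3$, then apply Lemma~\ref{lem:pathneighbor} to the link of that triangle to obtain a path on at least six common neighbours and take six consecutive vertices. The only divergence is in the first step, where the paper looks at the last vertex of the induced $G_3\star G_6$ added during the construction of the $4$-tree (its neighbourhood inside the subgraph is a clique of size at most $4$, forcing it into $G_6$ and forcing $G_3$ to be complete), whereas you use chordality of the $4$-cycles $x_1y_ix_2y_j$ together with the clique-number bound of a $4$-tree; both arguments are valid.
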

Note that $C_6\star K_3$ is not an induced subgraph of any stacked polytopal graph.
\begin{proof}
  Let $H$ be an induced subgraph of $G$ of form $G_3\star G_6$.  Let $v\in V(H)$ be the last vertex of~$H$ that is added into the polytope during the construction of $G$.  We have $\deg_Hv=4$, and the neighbors of $v$ induce a complete graph.  So the vertex~$v$ must be a vertex of $G_6$. On the other hand, $G_3$ is an induced subgraph of $K_4$, therefore must be the complete graph $K_3$.  Hence $H$ is of the form $K_3\star G_6$.

  By Lemma~\ref{lem:pathneighbor}, in the stacked $4$-polytope with graph $G$, the link of every $2$-face is stacked $1$-polytopal.  In other words, the common neighbors of $K_3$ induce a path $P_n$ where $n\geq 6$.  Therefore $G$ must have an induced subgraph of the form $P_6\star K_3$.
\end{proof}

\begin{proof}[proof of Theorem~\ref{thm:main}]
  The ``only if'' follows from Theorem~\ref{thm:pk} and Lemma~\ref{lem:k3p6}.  We prove the ``if'' part by induction on number of vertices.

  The complete graph on $5$ vertices is clearly $3$-ball packable.  Assume that every stacked $4$-polytope with less than $n$ vertices satisfies this theorem.  We now study a stacked $4$-polytope $\mathcal{P}$ of $n+1$ vertices that do not have six $4$-cliques in its graph with $3$ vertices in common, and assume that $G(\mathcal{P})$ is not ball packable.

  Let $u,v$ be two vertices of $G(\mathcal{P})$ of degree $4$.  Deleting $v$ from $\mathcal{P}$ leaves a stacked polytope $\mathcal{P}'$ of $n$ vertices that satisfies the condition of the theorem, so $G(\mathcal{P}')$ is ball packable by the assumption of induction.  In the ball packing of $\mathcal{P}'$, the four balls corresponding to the neighbors of $v$ are pairwise tangent.  We then construct the ball packing of $\mathcal{P}$ by adding a ball $S_v$ that is tangent to these four balls.  We have only one choice (the other choice coincides with another ball), but since $G(\mathcal{P})$ is not ball packable, $S_v$ must intersect some other balls.

  However, deleting $u$ also leaves a stacked polytope whose graph is ball packable. Therefore $S_v$ must intersect $S_u$ and only $S_u$.  Now if there is another vertex $w$ of degree $4$ different from $u$ and $v$, deleting $w$ leaves a stacked polytope whose graph is ball packable, which produces a contradiction.  Therefore $u$ and $v$ are the only vertices of degree $4$.

  Let $\mathcal{T}$ be the dual tree of $\mathcal{P}$, its leaves correspond to vertices of degree $4$.  So $\mathcal{T}$ must be a path, whose two ends correspond to $u$ and $v$.  We can therefore construct the ball packing of $\mathcal{P}$ as a generalised Coxeter's sequence studied in the previous part.  The first ball is $S_u$.  The construction word does not contain any subword of form $(\R_i\R_j)^2$ (which produces $C_6\star K_3$ and violates the condition) or $\R_i\R_i$, one can therefore always simplify the word into a \emph{non-empty} reduced word. This does not change the corresponding matrix, so the curvature-center matrix of the last Descarte configuration remains the same.

  Then Lemma~\ref{lem:pathtree} says that $S_u$ and $S_v$ are disjoint, which contradicts our previous discussion.  Therefore $G(\mathcal{P})$ is ball packable.
\end{proof}
\begin{corollary}[of the proof]\label{cor:hexletfree}
  The tangency graph of an Apollonian $3$-ball packing is a $4$-tree if and only if it does not contain any Soddy's hexlet.
\end{corollary}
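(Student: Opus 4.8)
The plan is to prove the two implications separately; the forward direction is a one-line chordality argument, and all the work goes into the converse, which recycles the generalized Coxeter sequence and Lemma~\ref{lem:pathtree}. For the forward direction I would argue contrapositively: Soddy's hexlet is the packing $K_3\star C_6$, and since tangency is a geometric relation, nine balls forming a hexlet induce in the tangency graph exactly $K_3\star C_6$; in particular the six cyclic balls induce a $C_6$. A $4$-tree is chordal by definition, hence has no induced cycle of length $\geq 4$, so it cannot contain a hexlet.

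For the converse I would first turn ``$G(\mathcal{S})$ is a $4$-tree'' into a statement about tangencies. Fix a construction of the Apollonian packing $\mathcal{S}$ and let $G_0$ be the spanning subgraph of $G(\mathcal{S})$ recording, at each stacking step, only the four tangencies between the new ball and the four balls it is placed against, together with the ten tangencies of the initial Descartes configuration. By construction $G_0$ is a $4$-tree, and every $4$-tree on $n$ vertices has exactly $4n-10$ edges; since $G_0\subseteq G(\mathcal{S})$, the graph $G(\mathcal{S})$ is a $4$-tree if and only if $G(\mathcal{S})=G_0$, i.e.\ if and only if $\mathcal{S}$ has no \emph{extra} tangency $PQ\notin E(G_0)$. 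So it suffices to show that an extra tangency forces a hexlet.

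To produce the hexlet from an extra tangency $PQ$ I would pass to the tree $T$ of Descartes configurations of the fixed construction. Two balls lying in a common configuration are always adjacent in $G_0$ (the later-born of the two is created tangent to the other), so $P$ and $Q$ occupy disjoint subtrees of $T$. Traversing the unique path of $T$ joining these subtrees realizes a generalized Coxeter sequence in the sense of Section~\ref{sse:genseq}, with initial configuration $c_0$ containing $P$, in which $P$ is the ball deleted at the first step and $Q$ the ball created at the last; after a M\"obius normalization placing $c_0$ in the standard position of that lemma and relabelling, this reads $\U_1=\R_1$, $S_1=P$, $S_{5+n}=Q$, with $S_1$ tangent to $S_{5+n}$. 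Were $\U$ reduced, Lemma~\ref{lem:pathtree} would force $S_1=P$ to be disjoint from every ball beyond the first five, in particular from $Q$, a contradiction; hence $\U$ is not reduced. Being the word of a simple path in a tree it contains no subword $\R_i\R_i$, so it carries a second-type forbidden subword, the shortest of which is $(\R_i\R_j)^2$; and the computation in the remark after Theorem~\ref{thm:pk} shows that these four steps place, around the three untouched balls, six balls that close up into a $C_6$, i.e.\ Soddy's hexlet inside $\mathcal{S}$ --- the desired contradiction.

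The hard part will be this last extraction. Lemma~\ref{lem:pathtree} only tells me that the path word is not reduced, whereas I need an \emph{explicit} hexlet sitting in the given packing. Two points require care: a non-reduced word may carry a second-type pattern with $m>2$ rather than a literal $(\R_i\R_j)^2$, so I must reduce to the core $(\R_i\R_j)^2$; and the nine balls produced by any word manipulation must be shown to be balls genuinely present in $\mathcal{S}$, not in a relabelled configuration. I would handle both by choosing $P,Q$ to realize a \emph{shortest} extra tangency, which forces the path word to have no proper non-reduced subword and hence to be exactly $(\R_i\R_j)^2$, so that the hexlet it names is literally a sub-packing of $\mathcal{S}$.
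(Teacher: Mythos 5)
Your forward direction (a $4$-tree is chordal, a hexlet contains an induced $C_6$ through the tangency of the two half-spaces at $\infty$) is fine; the paper dismisses this direction as trivial. Your setup for the converse is also essentially the paper's: reduce ``not a $4$-tree'' to the existence of an extra tangency $PQ$ outside the construction $4$-tree $G_0$, realize the pair by a word over the generators along a path of Descartes configurations with $\U_1=\R_1$ and $S_1=P$, and invoke Lemma~\ref{lem:pathtree} to conclude that this word is not reduced. Up to there the argument is sound (the claim that two balls sharing a configuration are adjacent in $G_0$, and the edge count $4n-10$, both check out).

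The gap is in the very last step, exactly where you predicted the difficulty would lie, and your proposed fix does not close it. You argue: a shortest extra tangency forces the path word to have no proper non-reduced subword, ``and hence to be exactly $(\R_i\R_j)^2$.'' Neither implication holds. First, the second-type forbidden words $\V_1\cdots\V_{2m}$ with $m\ge 3$ are themselves minimal: for instance the length-$6$ pattern $\R_a\R_b\R_a\R_c\R_b\R_c$ contains no subword $\R_i\R_i$ and no subword $\R_i\R_j\R_i\R_j$, so it has no proper non-reduced subword yet is not $(\R_i\R_j)^2$. Minimality under ``contains a proper forbidden subword'' therefore cannot rule out these longer patterns, which is precisely the $m>2$ case you flagged. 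Second, even the claim that a proper forbidden subword of the path word would yield a strictly shorter extra tangency is unargued; it is true for a literal $(\R_i\R_j)^2$ (the six balls it produces close into a hexlet, so its first and last balls are an extra tangency at distance $4$), but for the longer minimal patterns no extra tangency is exhibited, so the minimal-counterexample induction has nothing to recurse on. The paper closes this gap by a different mechanism: it takes $S$ to be the \emph{first ball in the construction order} that acquires a fifth tangency (so the graph is a $4$-tree, hence stacked $4$-polytopal, up to that point), reduces the word $\U$ to a reduced word $\U'$ with the same final configuration, applies Lemma~\ref{lem:pathtree} to the suffix of $\U'$ beginning at its second letter $\R_1$ to show that the only ball of the $\U'$-sequence touching $S'$ outside the initial configuration is the one created at the first step by some $\R_i$, and then forces $\R_i$ to occur exactly once in $\U'$ and to be the leading letter of $\U$, whence $\U=\R_i\R_1\R_i\R_1$. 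Some argument of this kind --- pinning down the word through the reduction process rather than through minimality of the tangency --- is needed to finish your proof.
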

\begin{proof}
  The ``only if'' part is trivial. We only need to proof the ``if'' part.

  If the tangency graph is a $4$-tree, then during the construction, every newly added ball touches exactly $4$ pairwise tangent balls.  If it is not the case, we can assume $S$ to be the first ball that touches five balls, the extra ball being $S'$.

  Since the tangency graph is stacked $4$-polytopal before introducing $S$, there is a sequence of Descartes configurations generated by a word, with $S'$ in the first configuration and $S$ in the last one.  By ignoring the leading configurations in the sequence if necessary, we may assume that the second Descartes configuration does not contain $S'$.  We can arrange the first configuration as in the previous proof, taking $S'$ as the lower half-space $x_1<0$ and labelling it as the first ball. Therefore the generating word $\U$ ends with $\R_1$. 

  We may assume that $\U$ does not have any subword of the form $\R_i\R_i$. If~$\U$ is reduced, we know in the proof of Theorem~\ref{thm:main} that $S$ and $S'$ are disjoint, contradiction. So $\U$ is non-reduced, but we may simplify $\U$ to a reduced one~$\U'$. This will not change the curvature-center matrix of the last Descartes configuration. After this simplification, the last letter of $\U'$ can not be $\R_1$ anymore, otherwise $S$ and $S'$ are disjoint. If $\U$ ends with $\R_i\R_1$, then $\U'$ ends with $\R_1\R_i$.  
  
  In the sequence of balls generated by $\U'$, the only ball that touches $S'$ but not in the initial Descartes configuration is generated at the first step by $\R_i$.  This ball must be $S$ by assumption.  This is the only occurrence of $\R_i$ in $\U'$, otherwise $S$ is not contained in the last Descartes configuration generated by $\U'$.  Since $S$ is the last ball generated by $\U$, $\R_i$ must be the first letter of $\U$.  The only possibility is then $\U=\R_i\R_1\R_i\R_1$, which implies the presence of Soddy's hexlet.
\end{proof}

Therefore, the relation between $4$-trees, stacked $4$-polytopes and Apollonian $3$-ball packings can be illustrated as follows:

\begin{center}
  \begin{tikzpicture}[node distance=3cm]
    \node (tree) {$4$-tree};
    \node (polytope) [below left=of tree, xshift=1cm] {stacked $4$-polytope};
    \node (packing) [below right=of tree, xshift=-1cm] {Apollonian $3$-ball packing};
    \draw [->>] (tree) to node [sloped,above] {no three $5$-cliques} node [sloped,below] {sharing a $4$-clique} (polytope);
    \draw [right hook->] (polytope) to node [sloped,above] {no six $4$-cliques} node [sloped,below] {sharing a $3$-clique} (packing);
    \draw [->] (packing) to node [sloped,above] {does not contain} node [sloped,below] {Soddy's hexlet} (tree);
  \end{tikzpicture}
\end{center}
where the hooked arrow $A\hookrightarrow B$ means that every instance of $A$ corresponds to an instance of $B$ satisfying the given condition. 

\subsection{Higher dimensions}
In dimensions higher than $3$, the following relation between Apollonian packing and stacked polytope is restored.
\begin{theorem}\label{thm:not3d}
  For $d>3$, if a $d$-ball packing is Apollonian, then its tangency graph is stacked $(d+1)$-polytopal.
\end{theorem}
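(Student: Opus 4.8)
The plan is to show that for $d>3$ the Apollonian construction creates no \emph{accidental} tangencies: when a ball is added against $d+1$ pairwise tangent balls it ends up tangent to those $d+1$ and to no ball already present. Granting this, the tangency graph $G$ is exactly the $(d+1)$-tree produced by the stacking construction, and Theorem~\ref{thm:ktree} then certifies it as stacked $(d+1)$-polytopal: the forbidden pattern of three $(d+2)$-cliques sharing a $(d+1)$-clique would require three balls tangent to a common $K_{d+1}$, an induced $K_{d+1}\star\bar K_3$, which Corollary~\ref{cor:ktree} excludes. (No clique can exceed $d+2$ vertices, since at most $d+2$ balls are pairwise tangent.) Thus everything reduces to excluding accidental tangencies.

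I would induct on the number of balls, as in Theorem~\ref{thm:main}. Let $S$ be the first ball whose insertion, against a clique $Q=\{A_1,\dots,A_{d+1}\}$, produces a tangency to some ball $B\notin Q$; by induction the earlier packing is stacked $(d+1)$-polytopal. The crucial lemma is that $B$ must be tangent to \emph{exactly} $d$ of the balls of $Q$. To prove it I would invert at the point $S\cap B$, which sends $S$ and $B$ to two parallel half-spaces, a ``floor'' and a ``ceiling'', and sends $\{S\}\cup Q$ to a Descartes configuration made of the floor and $d+1$ mutually tangent balls resting on it. Such a configuration is unique up to similarity, and a short Descartes computation shows that it consists of $d$ balls of one maximal radius together with a single smaller ball. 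Since $S$ occupies the bounded pocket cut out by $Q$, any $B$ tangent to it from outside must enter through the unique window facing the smallest ball; equivalently, the ceiling, being parallel to the floor and disjoint from every ball, is tangent precisely to the $d$ balls of maximal radius. Hence $B$ is tangent to a $d$-subclique $W\subset Q$, and not to the omitted ball $A_{d+1}$.

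Now both $A_{d+1}$ and $B$ lie in the link of the $(d-1)$-face $W$, which by Lemma~\ref{lem:pathneighbor} is a path before $S$ is inserted. The new ball $S$ is tangent to $W$ and to both $A_{d+1}$ and $B$, while $A_{d+1}\not\sim B$; so inserting $S$ closes this path into a cycle of length at least $4$ within the link of $W$. But the induced subgraph on $W$ and its link is $K_d$ joined with that link, hence $d$-ball packable, so Corollary~\ref{cor:sphcod} identifies the link as the minimal-distance graph of a $\bigl(2,\tfrac{1}{d-1}\bigr)$-spherical code --- a set of points on a circle. Such a graph has maximum degree $2$ and contains a cycle only if the points are evenly spaced all the way around, which forces $\cos(2\pi/m)=\tfrac{1}{d-1}$. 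For integers $d\ge 4$ and $m\ge 3$ this equation has no solution (the only solution with $d\ge 3$ is $d=3,\ m=6$, which is precisely Soddy's hexlet), so the cycle cannot exist and $S$ has no accidental tangency.

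The hard part is the window lemma of the second paragraph: establishing that the inverted Descartes configuration has exactly $d$ balls of maximal radius, and that a ball reaching the inscribed $S$ must be tangent to all $d$ of them. I would also check that the degeneracies flagged in the remark after Corollary~\ref{cor:g4g6} do not intervene; here the relevant clique has size exactly $d$, so its link already lives on a circle and the spherical-code dichotomy between ``union of paths'' and ``single even cycle'' applies without the lower-dimensional collapse that obstructs the analogous arguments in lower dimension.
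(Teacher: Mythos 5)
Your reduction to ``the construction creates no accidental tangencies, hence the tangency graph is a $(d+1)$-tree, hence (by Corollary~\ref{cor:ktree} and Theorem~\ref{thm:ktree}) stacked $(d+1)$-polytopal'' is exactly the paper's strategy, and your endgame (the link of the $d$-clique $W$ acquires a cycle; $K_d\star L$ ball-packable forces $L$ to be the minimal-distance graph of a $\bigl(2,\tfrac{1}{d-1}\bigr)$-spherical code; Niven's theorem shows $\cos(2\pi/m)=\tfrac{1}{d-1}$ has no solution for $d\ge 4$) is correct and appealing. But everything funnels through the ``window lemma'' --- that the accidental neighbour $B$ is tangent to exactly $d$ balls of $Q$ --- and your justification of it is wrong. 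A Descartes configuration consisting of a half-space and $d+1$ mutually tangent balls resting on it is \emph{not} unique up to similarity: already for $d=2$ the curvature vectors $(0,1,1,4)$ and $(0,1,\tfrac14,\tfrac94)$ both satisfy \eqref{eq:soddy} and are both realizable as three mutually tangent circles on a line, so the radii of the floor balls are not determined and nothing forces $d$ of them to share a maximal radius. All the inversion at $S\cap B$ gives you is that the images of the $Q$-balls have diameter at most the width of the slab, with equality exactly for those tangent to $B$; how many attain equality depends on where on $\partial S$ the tangency point sits, which you do not control. Worse, the claim is simply false for an arbitrary ball $B$ tangent to $S$: a small ball placed in one of the $d+1$ pockets bounded by $S$ and $d$ of the $A_i$ touches $S$ and nothing else. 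So a correct proof of the window lemma must exploit that $B$ belongs to the Apollonian packing built so far, and your argument never does. You flag this step as ``the hard part,'' but as written it is not merely unfinished --- the uniqueness claim it rests on is false.

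For comparison, the paper sidesteps the geometry entirely with an algebraic argument (Lemma~\ref{lem:not3d}): normalize the offending earlier ball to be the half-space $x_1\le 0$, encode the chain of Descartes configurations from it to $S$ as a word $\U$ over $\R_1,\dots,\R_{d+2}$ ending in $\R_1$ with no repeated adjacent letters, and show --- using the fact that the rows of $\U-\I$ are multiples of rows of $\frac{1}{d-1}\e\e^\intercal-d\I$, together with a rational-root and parity argument on the resulting polynomial in $\frac{1}{d-1}$ --- that the weighted row sums $\sigma_i^w(\U)$ never equal $1$ when $d\neq 3$. Since $\sigma_i^w(\U)$ is precisely the coordinate $x_1\kappa$ of the $i$-th ball, no ball beyond the initial configuration can be tangent to that half-space, which kills all accidental tangencies at once. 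Either supply an honest proof of your window lemma (using the packing structure of $B$) or substitute this algebraic lemma; as it stands the proposal has a genuine gap.
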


We will need the following lemma:
\begin{lemma}\label{lem:not3d}
  If $d\neq 3$, let $\w$ be the $(d+2)$ dimensional vector $(-1,1,\dots,1)^\intercal$, and $\U=\U_n\dots\U_2\U_1$ be a word over the generators of the $d$-dimensional Apollonian group (i.e.~$\U_i\in\{\R_1,\cdots,\R_{d+2}\}$).  If $\U$ ends with $\R_1$ and does not contain any subword of the form $\R_i\R_i$, then $\sigma_i^w(\U)\neq 1$ for $1\leq i\leq d+2$ as long as $\U$ contains the letter $\R_i$.
\end{lemma}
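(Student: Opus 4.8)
The plan is to track how the row sums $\sigma_i^w$ and the weighted mass $\Sigma^w$ evolve as the word $\U$ is built up one generator at a time, exactly as in the proof of Lemma~\ref{lem:incmass} but for general $d$. First I would record the general-dimension analogue of \eqref{eq:rowsum}: writing $\U'$ for the current word and applying $\R_j$ on the left, one has $\sigma_i^w(\R_j\U')=\sigma_i^w(\U')$ for $i\neq j$, while $\sigma_j^w(\R_j\U')=\tfrac{2}{d-1}\Sigma^w(\U')-\tfrac{d+1}{d-1}\sigma_j^w(\U')$ and $\Sigma^w(\R_j\U')=\tfrac{d+1}{d-1}\Sigma^w(\U')-\tfrac{2d}{d-1}\sigma_j^w(\U')$. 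Since $\sigma_i^w$ is altered only when the letter $\R_i$ is applied, $\sigma_i^w(\U)$ equals the value produced at the last occurrence of $\R_i$; it therefore suffices to show that every single application of a generator produces a row value different from $1$.

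To isolate the forbidden value I would substitute $b_i:=\sigma_i^w-1$ and $T:=\Sigma^w-d$, so the claim becomes $b_i\neq 0$ for every moved coordinate. In these variables the recursion is homogeneous: applying $\R_j$ fixes every $b_i$ with $i\neq j$ and sends $(b_j,T)$ to $M(b_j,T)^\intercal$, where $M=\tfrac{1}{d-1}\left(\begin{smallmatrix}-(d+1)&2\\-2d&d+1\end{smallmatrix}\right)$. One checks $\det M=-1$ and $\operatorname{tr}M=0$, so $M$ is an involution with eigenvalues $\pm1$; equivalently $b_j\mapsto 2\alpha_j-b_j$ reflects $b_j$ about the $\R_j$-invariant quantity $\alpha_j=(T-b_j)/(d-1)=(2+\sum_{k\neq j}b_k)/(d-1)$. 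The starting data are $b=(-2,0,\dots,0)$ and $T=0$, and the first letter $\R_1$ yields $b_1=2(d+1)/(d-1)\neq 0$ and $T=4d/(d-1)$.

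The decisive point is the elementary fact that $\tfrac{d+1}{d-1}$ and $\tfrac{2d}{d-1}$ are integers precisely when $d\in\{2,3\}$; for $d\geq 4$ the update genuinely divides by $d-1>1$. Fixing a prime $p\mid d-1$ — chosen odd whenever $d-1$ is not a power of $2$, and $p=2$ (so $\nu_2(d-1)\geq 2$) otherwise — I would prove by induction on the word length the invariant that after each application $\nu_p(T)$ is finite and strictly smaller than at the previous step, that the just-moved coordinate satisfies $\nu_p(b_j)=\nu_p(T)$, and that every other coordinate has strictly larger valuation. The hypothesis that $\U$ contains no subword $\R_i\R_i$ enters exactly here: it forces the coordinate $j$ about to be moved to differ from the one moved last, so by the inductive hypothesis $\nu_p(b_j)>\nu_p(T)$ before the step. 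Using $d\equiv 1\pmod p$ one gets $d+1\equiv 2d\equiv 2$, whence in both numerators $2T-(d+1)b_j$ and $(d+1)T-2d\,b_j$ the $T$-term strictly dominates the $b_j$-term $p$-adically; no cancellation occurs, the division by $d-1$ lowers the valuation, and $\nu_p(b_j)$ becomes finite. In particular $b_j\neq 0$, that is $\sigma_j^w\neq 1$. The base case is the first step above, where $\nu_p(b_1)=\nu_p(T)$ is already finite.

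For $d=2$ the recursion is integral and the valuation argument is vacuous; here I would instead carry a positivity/monotonicity invariant showing that after the first step all $b_i\geq 0$ while the just-moved coordinate is strictly positive, the no-$\R_i\R_i$ condition again preventing the (extremal) freshly-updated coordinate from being reflected back down to $0$. I expect the main obstacle to be the $p$-adic bookkeeping of the third paragraph: because the single scalar $T$ couples all coordinates, one must verify that moving any other coordinate never lifts $\nu_p(b_j)$ back up, and the genuinely delicate regime is $p=2$ with $\nu_2(d-1)=1$, i.e.\ $d\equiv 3\pmod 4$. For such $d\neq 3$ one has $d-1=2m$ with $m$ odd and $m>1$, so one may switch to an odd prime factor of $m$ and the domination argument survives; but for $d=3$ there is no odd prime to escape to, the numerators can over-compensate the division by $2$, $\nu_2(b_j)$ can climb back to $+\infty$, and $b_j=0$ becomes attainable — which is exactly the configuration realizing Soddy's hexlet. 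This is precisely why the conclusion singles out $d=3$.
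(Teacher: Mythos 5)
Your approach is genuinely different from the paper's, and for $d\geq 4$ it checks out. The paper runs no induction on the word: it imports the structure theorem of Graham et al.\ that every row of $\U-\I$ is a scalar multiple of the corresponding row of $\A=\frac{1}{d-1}\e\e^\intercal-d\I$, so that $\sigma_i^w(\U)-\sigma_i^w(\I)=\frac{2C_i}{d-1}$ for a polynomial $C_i$ in $x_d=1/(d-1)$ with leading term $2^{n_i}x_d^{n_i-1}$; the case $i\neq1$ then reduces to the cited non-vanishing of $C_i$, and the case $i=1$ to $C_1\neq d-1$, settled by the rational root theorem plus a parity count. Your direct induction on $b_i=\sigma_i^w-1$ and $T=\Sigma^w-d$ is self-contained, and the arithmetic is right: the recursion $b_j\mapsto\frac{2T-(d+1)b_j}{d-1}$, $T\mapsto\frac{(d+1)T-2db_j}{d-1}$ is correct, the base case gives $\nu_p(b_1)=\nu_p(T)$ finite for your choice of $p\mid d-1$, and since $d\equiv 1\pmod p$ makes $2$, $d+1$ and $2d$ all $p$-adic units (respectively all of $2$-adic valuation exactly $1$ when $d-1=2^a$, $a\geq2$), the $T$-terms dominate, $\nu_p(T)$ strictly drops at every step, and the no-$\R_i\R_i$ hypothesis is exactly what guarantees the coordinate about to be moved enters with strictly larger valuation. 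This buys a transparent explanation of why only $d=3$ fails ($d-1=2$ forces $p=2$ with $\nu_2(d-1)=1$ and $\nu_2(d+1)=2$, so the two terms of $2T-3b_j$ can share a valuation and cancel --- which is realized by $\U=\R_i\R_1\R_i\R_1$, the hexlet), at the cost of redoing bookkeeping the paper outsources to the citation.

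The one genuine gap is $d=2$, which the statement covers ($d\neq3$) even though Theorem~\ref{thm:not3d} only uses $d>3$. There $d-1=1$ has no prime divisor, the valuation argument is unavailable, and your substitute is a one-sentence sketch that is not as routine as ``positivity plus monotonicity''. Writing $\alpha_j=2+\sum_{k\neq j}b_k$, so that for $d=2$ the generator $\R_j$ sends $b_j$ to $2\alpha_j-b_j$, the invariant you need is $b_j<2\alpha_j$ for every $j$ other than the last-moved index; but immediately after the first step one has $b_1=6>4=2\alpha_1$, precisely because $b_1$ starts at $-2$ rather than $0$. So the naive invariant is false, and it must be repaired by invoking the no-$\R_1\R_1$ condition a second time: before $\R_1$ can recur, some other coordinate is strictly increased, pushing $\alpha_1$ back above $b_1/2$. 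You should either carry out this case as a genuine joint induction (``every moved coordinate strictly increases at its move'' together with ``$b_j<2\alpha_j$ off the last-moved index''), or restrict your proof to $d\geq4$ and say so --- as written, the $d=2$ clause of the lemma is unproved.
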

\begin{proof}
  It is shown in~\cite{graham2006}*{Theorem 5.2} that the $j$-th row of $\U-\I$ is a linear combination of rows of the matrix $\A=\frac{1}{d-1}\e\e^\intercal-d\I$. However, the weighted row sum $\sigma_i^w(\A)$ of the $i$-th row of $\A$ is $0$ except for $i=1$, whose weighted row sum is $\frac{2}{d-1}$. So $\sigma_i^w(\U-\I)=\frac{2C_i}{d-1}$, where $C_i$ is the coefficient in the linear combination.

  According to the calculation in~\cite{graham2006}, $C_i$ is a polynomial in the variable $x_d=\frac{1}{d-1}$ in the form of \[C_i(x_d)=\sum_{k=0}^{n_i-1} c_k2^{k+1}x_d^k\] where $n_i$ is the length of the longest subword that starts with $\R_i$ and ends with $\R_1$, and $c_k$ are integer coefficients. The leading term is $2^{n_i}x_d^{n_i-1}$ (i.e.~$c_{n_i-1}=1$).  Then, by the same argument as in~\cite{graham2006}, we can show that $C_i(x_d)$ is not zero as long as $\U$ contains $\R_i$.  Therefore, for $i\neq 1$, \[\sigma_i^w(\U)=\frac{2C_i}{d-1}+\sigma_i^w(\I)=\frac{2C_i}{d-1}+1\neq 1.\]

  For $i=1$, since $\sigma_1^w(\I)=-1$, what we need to prove is that $C_1\neq d-1$. So the calculation is slightly different.  If $C_1= d-1$, then $x_d$ is a root of the polynomial $x_dC_1(x_d)-1$, whose leading term is $(2x_d)^{n_1}$.  By the rational root theorem, $d-1$ divides $2^{n_1}$. So we must have $d-1=2^p$ for some $p>1$, that is, $x_d=2^{-p}$. We then have \[\sum_{k=1}^{n_1}c_{k-1}2^{k(1-p)}=1.\] Multiply both side by $2^{(p-1)n_1}$, we got \[\sum_{k=1}^{n_1}c_{k-1}2^{(p-1)(n_1-k)}=2^{(p-1)n_1}.\] The right hand side is even since $(p-1)n_1>0$. The terms in the summation are even except for the last one since $(p-1)(n_1-k)>0$.  The last term in the summation is $c_{n_1-1}2^0=1$, so the left hand side is odd, which is the desired contradiction.  Therefore \[\sigma_1^w(\U)=\frac{2C_1}{d-1}+\sigma_1^w(\I)\neq 1.\]
\end{proof}

\begin{proof}[proof of Theorem~\ref{thm:not3d}]
  Consider a construction process of the Apollonian ball packing. The theorem is true at the first step. Assume that it remains true before the introduction of a ball $S$. We are going to prove that, once added, $S$ touches exactly $d+1$ pairwise tangent balls in the packing.

  If this is not the case, assume that $S$ touches a $(d+2)$-th ball $S'$, then we can find a sequence of Descartes configurations, with $S'$ in the first configuration and $S$ in the last, generated (similar as in Section~\ref{sse:genseq}) by a word over the generators of the $d$-dimensional Apollonian group with distinct adjacent terms.  Without loss of generality, we assume $S'$ to be the lower half-space $x_1\leq 0$, as in the proof of the Corollary~\ref{lem:pathtree}. Then Lemma~\ref{lem:not3d} says that no ball (except for the first $d+2$ balls) in this sequence is tangent to $S'$, contradicting our assumption.

  By induction, every newly added ball touches exactly $d+1$ pairwise tangent balls, so the tangency graph is a $(d+1)$-tree, and therefore $(d+1)$-polytopal.
\end{proof}

So the relation between $(d+1)$-trees, stacked $(d+1)$-polytopes and Apollonian $d$-ball packings can be illustrated as follows:
\begin{center}
  \begin{tikzpicture}[node distance=3.5cm]
    \node (tree) {$(d+1)$-tree};
    \node (polytope) [below left=of tree, xshift=1.5cm] {stacked $(d+1)$-polytope};
    \node (packing) [below right=of tree, xshift=-1.5cm] {Apollonian $d$-ball packing};
    \draw [->>] (tree) to node [sloped,above] {no three $(d+2)$-cliques} node [sloped,below] {sharing a $(d+1)$-clique} (polytope);
    \draw [->>] (polytope) to node [sloped,above] {unknown condition} (packing);
    \draw [right hook->] (packing) to (tree);
  \end{tikzpicture}
\end{center}

Now the remaining problem is to characterise stacked $(d+1)$-polytopal graphs that are $d$-ball packable.  From Corollary~\ref{cor:ktree}, we know that if a $(d+1)$-tree is $d$-ball packable, the number of $(\alpha+3)$-cliques sharing a $(\alpha+2)$-clique is at most $k(d-1,\alpha)$ for all $1\leq\alpha\leq d-1$.  Following the patterns in Theorems~\ref{thm:main} and~\ref{thm:ktree}, we propose the following conjecture:
\begin{conjecture}\label{conj:higher}
  For an integer $d\geq 2$, there is $d-1$ integers $n_1,\dots,n_{d-1}$ such that a $(d+1)$-tree is $d$-ball packable if and only if the number of $(\alpha+3)$-cliques sharing an $(\alpha+2)$-clique is at most $n_\alpha$ for all $1\leq\alpha\leq d-1$. 
\end{conjecture}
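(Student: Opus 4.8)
The plan is to treat the two implications separately, following the template of the $d=3$ case (Theorem~\ref{thm:main}), and to pin down the thresholds $n_\alpha$ along the way. First I would make the local data geometric. An $(\alpha+2)$-clique $C$ is an $(\alpha+1)$-face of the stacked $(d+1)$-polytope, so by Lemma~\ref{lem:pathneighbor} its link is a stacked $(d-\alpha-1)$-polytope, and the vertices forming $(\alpha+3)$-cliques with $C$ are \emph{exactly} the vertices of this link. Thus every clique-sharing count is the number of vertices of a link, and the induced subgraph on $C$ together with its link has the form $K_{\alpha+2}\star L$ with $L$ a stacked $(d-\alpha-1)$-polytopal graph. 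By Theorem~\ref{thm:akissing}, $K_{\alpha+2}\star L$ is $d$-ball packable if and only if $L$ is the tangency graph of a $(d-1,\alpha)$-kissing configuration. So the conjecture is equivalent to the claim that, among stacked $(d-\alpha-1)$-polytopal graphs, being realizable as a $(d-1,\alpha)$-kissing graph is governed by a single size threshold $n_\alpha$.

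For the \emph{necessity} (``only if'') direction I would define $n_\alpha$ to be the largest number of vertices of a stacked $(d-\alpha-1)$-polytopal graph that is realizable as a $(d-1,\alpha)$-kissing configuration, and show finiteness through the spherical-code bound $k(d-1,\alpha)=A(d-\alpha,\tfrac{1}{\alpha+1})$ of Corollary~\ref{cor:sphcod}. The crucial subtlety, already visible in the gap $n_1=5<k(2,1)=6$ for $d=3$, is that the extremal kissing configurations tend to be \emph{cyclic} (Soddy's hexlet being $C_6\star K_3$) and so cannot appear as a link inside a tree: the tree structure forces the link to be a \emph{stacked} polytope, which strictly lowers the admissible count. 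Concretely, whenever a count exceeds $n_\alpha$ I would exhibit a forbidden induced subgraph of the shape $K_{\alpha+2}\star L'$, with $L'$ a minimal stacked $(d-\alpha-1)$-polytopal graph on $n_\alpha+1$ vertices (reducing to $K_{\alpha+2}\star P_{n_\alpha+1}$ when the link is one-dimensional, as in Theorem~\ref{thm:pk}), and prove its non-packability by a diameter-sum computation generalizing the proof of Theorem~\ref{thm:pk}. The real work here is combinatorial bookkeeping over the configurations catalogued in Table~\ref{tab:sphcod} together with the careful handling of degenerate positions flagged in the remark after Corollary~\ref{cor:g4g6}.

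The \emph{sufficiency} (``if'') direction is the crux, and is where I expect the argument to break down. I would attempt the induction of Theorem~\ref{thm:main}: delete a leaf simplex, realize the smaller polytope by the inductive hypothesis, add the unique ball completing the final Descartes configuration, and argue that it meets no pre-existing ball. In dimension $3$ this last step rests on the weighted-mass monotonicity of Lemmas~\ref{lem:incmass} and~\ref{lem:pathtree}, which exploits three dimension-$3$ miracles: the hyperbolic Coxeter presentation of the Apollonian group (Theorem~\ref{thm:graham}), the \emph{integrality} of curvature-center coordinates, and Boyd's theorem that the $3$-dimensional Apollonian cluster is itself a packing. None of these survive for $d>3$: the cluster genuinely overlaps, and Lemma~\ref{lem:not3d} already had to replace the mass argument by a rational-root and parity argument even for the \emph{easy} direction (Theorem~\ref{thm:not3d}). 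The genuine obstacle is therefore to produce a substitute collision-free certificate. My line of attack would be to seek a weight vector $\w$, depending on $d$ and on the bounded clique counts, for which a monotonicity statement in the spirit of Lemma~\ref{lem:incmass} still holds \emph{under the hypothesis that every clique-sharing count is at most $n_\alpha$}, so that the bounded-count assumption is precisely what prevents the generalized Coxeter word (Section~\ref{sse:genseq}) from ever generating an overlapping pair of balls. Making such a monotonicity argument go through without integrality, and controlling the higher-dimensional degeneracies warned about after Corollary~\ref{cor:g4g6}, is exactly the hard part, and is presumably why the statement remains a conjecture rather than a theorem.
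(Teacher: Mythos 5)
The statement you are trying to prove is Conjecture~\ref{conj:higher}: the paper offers no proof of it, and explicitly leaves it open. So the first thing to say is that your text is not a proof and does not claim to be one --- you yourself write that the sufficiency direction ``is where I expect the argument to break down.'' A proposal that ends by conceding the crux is a research plan, not a proof, and it should be assessed as such. Your reduction of the local data is sound and matches the paper's own reasoning: in a stacked $(d+1)$-polytope an $(\alpha+2)$-clique is an $(\alpha+1)$-face, its link is stacked $(d-\alpha-1)$-polytopal by Lemma~\ref{lem:pathneighbor}, and Theorem~\ref{thm:akissing} converts packability of $K_{\alpha+2}\star L$ into realizability of $L$ as a $(d-1,\alpha)$-kissing configuration; this is exactly how the paper derives the necessary bound $k(d-1,\alpha)$ and, for $d=3$, the sharpened bound $n_1=5$ via Lemma~\ref{lem:k3p6}. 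But your claim that ``the conjecture is equivalent to'' a purely local threshold statement is an overreach: packability of every induced subgraph of the form $K_{\alpha+2}\star L$ is necessary, not sufficient, for packability of the whole graph, and the entire difficulty of the conjecture is the global assembly.

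Two further gaps deserve to be named concretely. First, even your necessity direction quietly assumes that realizability of a stacked $(d-\alpha-1)$-polytopal link graph as a $(d-1,\alpha)$-kissing configuration is monotone in the number of vertices; for $\alpha\le d-3$ these links are genuine stacked polytopes not determined by their vertex count, so defining $n_\alpha$ as ``the largest realizable size'' could leave non-realizable links below the threshold, which would falsify the ``if'' direction for that choice of $n_\alpha$. You also need the $\alpha=d-1$ condition (via $k(d,d)=2$ and Theorem~\ref{thm:ktree}) to first force the $(d+1)$-tree to be stacked polytopal before Lemma~\ref{lem:pathneighbor} applies. Second, for sufficiency you correctly identify that the three-dimensional proof rests on the Coxeter presentation of Theorem~\ref{thm:graham}, integrality of curvature-center coordinates, and Boyd's packing property of the Apollonian cluster, none of which persist for $d>3$; but ``seek a weight vector $\w$ for which monotonicity still holds under the bounded-count hypothesis'' is a wish, not an argument --- the paper's own remark after Corollary~\ref{cor:g4g6} warns that even the degenerate-case analysis in the forbidden-subgraph results does not generalize. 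In short: the local skeleton of your plan is consistent with the machinery the paper builds, but no part of it closes the conjecture, and the statement remains open.
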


\section{Discussions}\label{sec:discuss}
A convex $(d+1)$-polytope is \emph{edge-tangent} if all of its edges are tangent to a $d$-sphere called \emph{midsphere}.  One can derive from the disk packing theorem that\footnotemark:
\begin{theorem}\label{thm:edgetangent}
Every convex $3$-polytope has an edge-tangent realization. 
\end{theorem}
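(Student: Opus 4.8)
The plan is to deduce the statement from the \emph{primal--dual} strengthening of the disk packing theorem. First I would recall that, by Steinitz's theorem, the graph $G$ of a convex $3$-polytope $\mathcal{P}$ is precisely a $3$-connected simple planar graph, and that every such graph, together with its planar dual $G^\ast$, admits a \emph{simultaneous} circle packing on the sphere $S^2\subset\mathbb{R}^3$: a family of primal circles $\{C_v : v\in V(G)\}$ and dual circles $\{C_f^\ast : f\in F(G)\}$ such that two primal circles are tangent exactly along an edge $e=uv$, the two dual circles of the faces bordering $e$ are tangent at the \emph{same} point $t_e$, and the primal and dual circles cross orthogonally there. This primal--dual packing is the classical strengthening of Theorem~\ref{thm:diskpack} (Koebe--Andreev--Thurston), obtained by packing the triangulation associated to the vertex--face incidence structure of $G$; the quadrilateral pattern around each edge is what forces the orthogonal crossing at $t_e$.

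Given this packing on $S^2$, I would build the realization directly. For each vertex $v$, the primal circle $C_v$ is the contact locus of the cone tangent to $S^2$ from a unique external point $p_v$, namely the pole of the plane spanned by $C_v$. I would then set $\mathcal{P}'=\mathrm{conv}\{p_v : v\in V(G)\}$ and claim it is an edge-tangent realization of $\mathcal{P}$ with midsphere $S^2$.

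The heart of the argument is a short projective lemma: if two primal circles $C_u,C_v$ are tangent at a point $t$, then $p_u$, $t$, $p_v$ are collinear and the segment $p_up_v$ is tangent to $S^2$ at $t$. Indeed, since $t\in C_u$ lies in the polar plane of $p_u$, reciprocity of polarity places $p_u$ in the polar (tangent) plane $T_t$ of $t$, and likewise $p_v\in T_t$. Within $T_t$, the ruling direction $p_u-t$ of the tangent cone is orthogonal to the tangent line of $C_u$ at $t$ (meridian versus parallel on the cone of revolution), and similarly for $p_v-t$; but tangent circles share the same tangent line at $t$, so $p_u-t$ and $p_v-t$ are both orthogonal to one line in the $2$-plane $T_t$, hence parallel, with $t$ separating $p_u$ from $p_v$ because the circles are externally tangent. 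Thus every edge of $\mathcal{P}'$ is tangent to $S^2$ at the corresponding contact point $t_e$.

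The remaining step is to verify that $\mathcal{P}'$ is a genuine convex polytope combinatorially equivalent to $\mathcal{P}$: the orthogonal dual circles show that each $p_v$ is an extreme point rather than being buried, that the facet-supporting planes are the polar planes of the dual circles $C_f^\ast$, and that the vertex--edge--face incidences match those of $G$. I expect the main obstacle to be exactly this global step---upgrading the local edge-tangency and the combinatorics of the packing into a clean proof that the convex hull reproduces the face lattice of $\mathcal{P}$---together with the careful justification that the \emph{basic} disk packing theorem really yields the orthogonal primal--dual packing in the first place. By contrast, the collinearity lemma above is elementary and poses no difficulty.
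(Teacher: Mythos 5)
Your proposal follows essentially the same route the paper takes: it derives edge-tangency from the primal--dual circle packing theorem (stated in the paper as Theorem~\ref{thm:orthodisk}) and then applies the polar-vertex construction of Section~\ref{sec:discuss}, with the orthogonal face-disks pinning down the combinatorial type. The paper likewise defers both the primal--dual packing theorem and the final combinatorial verification to the literature (Brightwell--Scheinerman, Sachs), which are exactly the two points you flag as the remaining work, so there is no substantive divergence.
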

\footnotetext{Schramm~\cite{schramm1992} said that the theorem is first claimed by Koebe~\cite{koebe1936}, who only proved the simplicial and simple cases. He credits the full proof to Thurston~\cite{thurston1979}, but the online version of Thurston's lecture notes only gave a proof for simplicial cases.}

Eppstein, Kuperberg and Ziegler have proved in~\cite{eppstein2003} that no stacked $4$-polytopes with more than six vertices has an edge-tangent realization. Comparing to Theorem~\ref{thm:main}, we see that ball packings and edge-tangent polytopes are not so closely related in higher dimensions: a polytope with ball packable graph does not, in general, have an edge-tangent realization.  In this part, we would like to discuss about this difference in detail.

\subsection{From ball packings to polytopes}
Let $\mathbb{S}^d\subset\mathbb{R}^{d+1}$ be the unit sphere $\{\x\mid x_0^2+\cdots+x_d^2=1\}$.  For a spherical cap $C\subset\mathbb{S}^d$ of radius smaller than $\pi/2$, its boundary can be viewed as the intersection of $\mathbb{S}^d$ with a $d$-dimensional hyperplane $H$, which can be uniquely written in form of $H=\{\x\in\mathbb{R}^d\mid\langle\x,\v\rangle=1\}$.  Explicitly, if $\c\in\mathbb{S}^d$ is the center of $C$, and $\theta<\pi/2$ is its spherical radius, then $\v=\c/\cos\theta$. We can interpret $\v$ as the center of the unique sphere that intersects $\mathbb{S}^d$ \emph{orthogonally} along the boundary of $C$, or as the apex of the unique cone whose boundary is tangent to $\mathbb{S}^d$ along the boundary of $C$. We call $\v$ the \emph{polar vertex of} $C$, and $H$ the \emph{hyperplane of} $C$.  We see that $\langle\v,\v\rangle>1$.  If the boundary of two caps $C$ and $C'$ intersect orthogonally, their polar vertices $\v$ and $\v'$ satisfy $\langle\v,\v'\rangle=1$, i.e. the polar vertex of one is on the hyperplane of the other.  If $C$ and $C'$ have disjoint interiors, $\langle\v,\v'\rangle<1$.  If $C$ and $C'$ are tangent at $\t\in\mathbb{S}^d$, the segment $\v\v'$ is tangent to $\mathbb{S}^d$ at $\t$.

Now, given a $d$-ball packing $\mathcal{S}=\{S_0,\cdots,S_n\}$ in $\hat{\mathbb{R}}^d$, we can construct a $(d+1)$-polytope $\mathcal{P}$ as follows.  View $\hat{\mathbb{R}}^d$ as the hyperplane $x_0=0$ in $\hat{\mathbb{R}}^{d+1}$. Then a stereographic projection maps $\hat{\mathbb{R}}^d$ to $\mathbb{S}^d$, and $\mathcal{S}$ is mapped to a packing of spherical caps on $\mathbb{S}^d$. With a M\"obius transformation if necessary, we may assume that the radii of all caps are smaller than $\pi/2$. Then $\mathcal{P}$ is obtained by taking the convex hull of the polar vertices of the spherical caps.

\begin{theorem}
  If a $(d+1)$-polytope $\mathcal{P}$ is constructed as described above from a $d$-sphere packing $\mathcal{S}$, then $G(\mathcal{S})$ is isomorphic to a spanning subgraph of $G(\mathcal{P})$.
\end{theorem}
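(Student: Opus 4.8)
The plan is to exhibit an explicit bijection between the balls of $\mathcal{S}$ and the vertices of $\mathcal{P}$ under which every tangency of $\mathcal{S}$ becomes an edge of $\mathcal{P}$. Writing $C_i$ for the spherical cap obtained from $S_i$ and $\v_i$ for its polar vertex, so that $\mathcal{P}=\mathrm{conv}\{\v_0,\dots,\v_n\}$, the whole argument will run on the inner-product dictionary established just before the statement: $\langle\v_i,\v_i\rangle>1$; $\langle\v_i,\v_j\rangle<1$ whenever $C_i$ and $C_j$ have disjoint interiors (in particular whenever they are tangent); and, if $S_i$ and $S_j$ are tangent at $\t$, the segment $\v_i\v_j$ is tangent to $\mathbb{S}^d$ at $\t$, so that $\langle\v_i,\t\rangle=\langle\v_j,\t\rangle=1$ with $\t$ lying on the segment.

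First I would check that the map $S_i\mapsto\v_i$ is a bijection onto $V(\mathcal{P})$, i.e.\ that each $\v_i$ is an extreme point of $\mathcal{P}$ and that the $\v_i$ are pairwise distinct. For this, fix $i$ and consider the linear functional $\x\mapsto\langle\x,\v_i\rangle$. By the dictionary, $\langle\v_i,\v_i\rangle>1$ while $\langle\v_j,\v_i\rangle<1$ for every $j\neq i$, since in a packing all caps have pairwise disjoint interiors. Hence $\v_i$ is the unique maximiser of this functional over $\{\v_0,\dots,\v_n\}$, so the hyperplane $\{\x\mid\langle\x,\v_i\rangle=\langle\v_i,\v_i\rangle\}$ supports $\mathcal{P}$ and meets it only at $\v_i$. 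Thus $\v_i$ is a vertex; distinctness is immediate, because $\v_i=\v_j$ would give $\langle\v_i,\v_j\rangle=\langle\v_i,\v_i\rangle>1$. Consequently $V(\mathcal{P})=\{\v_0,\dots,\v_n\}$ and $G(\mathcal{S})$ shares its vertex set with $G(\mathcal{P})$.

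Next I would show that a tangency of $S_i$ and $S_j$ forces $\v_i\v_j$ to be an edge. Let $\t$ be the tangency point and consider the tangent hyperplane $H_\t=\{\x\mid\langle\x,\t\rangle=1\}$ of $\mathbb{S}^d$ at $\t$, which contains both $\v_i$ and $\v_j$. I claim $H_\t$ supports $\mathcal{P}$: for any $k$, the inequality $\langle\v_k,\t\rangle\le 1$ is equivalent to $\t\notin\mathrm{int}(C_k)$, and this holds because $\t\in\partial C_i$ is a limit of interior points of $C_i$, so $\t\in\mathrm{int}(C_k)$ would force the interiors of $C_i$ and $C_k$ to meet, contradicting the packing property. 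Therefore $F:=\mathcal{P}\cap H_\t$ is a face of $\mathcal{P}$ containing the segment $\v_i\v_j$, and it remains only to see that $F$ is exactly this segment.

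The step I expect to be the main obstacle is ruling out a third polar vertex on $H_\t$. Suppose $\langle\v_k,\t\rangle=1$ for some $k\neq i,j$, i.e.\ $\t\in\partial C_k$. Then $C_k$ shares the boundary point $\t$ with $C_i$ while having a disjoint interior; two caps meeting at a single boundary point with disjoint interiors must be tangent there, so $C_k$ is tangent to $C_i$ (and likewise to $C_j$) at $\t$. By the dictionary each of the segments $\v_i\v_j$, $\v_i\v_k$, $\v_j\v_k$ is then tangent to $\mathbb{S}^d$ at the common point $\t$, which forces $\v_i$, $\v_j$, $\v_k$ to be collinear on the tangent line through $\t$. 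But three collinear points among the vertices of $\mathcal{P}$ are impossible, since the middle one would be a convex combination of the other two and hence not extreme, contradicting the first step. Thus no such $\v_k$ exists, $F=\mathrm{conv}\{\v_i,\v_j\}$ is one-dimensional, and $\v_i\v_j$ is an edge of $\mathcal{P}$. Every edge of $G(\mathcal{S})$ is therefore an edge of $G(\mathcal{P})$, which together with the vertex bijection exhibits $G(\mathcal{S})$ as a spanning subgraph of $G(\mathcal{P})$.
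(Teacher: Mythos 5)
Your proof is correct, and while the vertex step coincides with the paper's (both expose $\v_i$ by the functional $\langle\cdot,\v_i\rangle$, using $\langle\v_i,\v_i\rangle>1>\langle\v_i,\v_j\rangle$), your edge step takes a genuinely different route. The paper argues by separation: any point $\v$ of the tangent segment satisfies $\langle\v,\v\rangle\ge1$, whereas pairing $\v$ with a convex combination of the remaining polar vertices yields an inner product strictly less than $1$; so the segment is disjoint from $\mathrm{conv}\{\v_k\mid k\ne i,j\}$, and since $\v_i,\v_j$ are already known to be vertices this is precisely what is needed to conclude $[\v_i,\v_j]$ is an edge. You instead exhibit the supporting hyperplane explicitly --- the tangent hyperplane $H_\t$ of $\mathbb{S}^d$ at the tangency point --- and show the exposed face it cuts out is exactly the segment by excluding a third polar vertex from $H_\t$. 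Both arguments are sound. Yours is more geometric and pinpoints where the edge touches the midsphere, but it needs two auxiliary facts you correctly supply: that two caps with disjoint interiors sharing a boundary point are tangent there (strict convexity of caps of radius below $\pi/2$), and that three caps pairwise tangent at a common point would force three collinear polar vertices, contradicting extremality. The paper's computation is shorter but leans on the polytope fact that if $[\v_i,\v_j]$ fails to be an edge between two vertices, then some point of the segment lies in the hull of the remaining vertices; your version avoids invoking that characterization.
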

\begin{proof}
  For every $S_i\in\mathcal{S}$, the polar vertex $\v_i$ of the corresponding cap is a vertex of $\mathcal{P}$, since the hyperplane $\{\x\mid\langle\v_i,\x\rangle=1\}$ divides $\v_i$ from other vertices.

  For every edge $S_iS_j$ of $G(\mathcal{S})$, we now prove that $\v_i\v_j$ is an edge of $\mathcal{P}$. Since $\v_i\v_j$ is tangent to the unit sphere, $\langle\v,\v\rangle\geq 1$ for all points $\v$ on the segment $\v_i\v_j$.  If $\v_i\v_j$ is not an edge of $\mathcal{P}$, some point $\v=\lambda\v_i+(1-\lambda)\v_j$ ($0\leq\lambda\leq 1)$ can be written as a convex combination of other vertices $\v=\sum_{k\neq i,j}\lambda_k\v_k$, where $\lambda_k\geq 0$ and $\sum\lambda_k\leq 1$. Then we have
  \[
  1\leq \langle\v,\v\rangle
  =\Big\langle\lambda\v_i+(1-\lambda)\v_j,\sum_{k\neq i,j}\lambda_k\v_k\Big\rangle<1
\]
  because $\langle\v_i,\v_j\rangle<1$ if $i\neq j$.  This is a contradiction.
\end{proof}

For an arbitrary $d$-ball packing $\mathcal{S}$, if a polytope $\mathcal{P}$ is constructed from $\mathcal{S}$ as described above, it is possible that $G(\mathcal{P})$ is not isomorphic to $G(\mathcal{S})$. That is, there may be an edge of $\mathcal{P}$ that does not correspond to any edge of $G(\mathcal{S})$. This edge will intersect $\mathbb{S}^d$, and $\mathcal{P}$ is therefore not edge-tangent.  On the other hand, if the graph of a polytope $\mathcal{P}$ is isomorphic to $G(\mathcal{S})$, since the graph does not determine the combinatorial type of a polytope, $\mathcal{P}$ may be different from the one constructed from $\mathcal{S}$. So a polytope whose graph is ball packable may not be edge-tangent.

\subsection{Edge-tangent polytopes}
A polytope is edge-tangent if it can be constructed from a ball packing as described above, and its graph is isomorphic to the tangency relation of this ball packing. Neither condition can be removed.
For the other direction, given an edge-tangent polytope $\mathcal{P}$, one can always obtain a ball packing of $G(\mathcal{P})$ by reversing the construction above. 

Disk packings are excepted from these problems. In fact, it is easier~\cite{sachs1994} to derive Theorem~\ref{thm:edgetangent} from the following version of the disk packing theorem, which is equivalent but contains more information: 

\begin{theorem}[Brightwell and Scheinerman~\cite{brightwell1993}]\label{thm:orthodisk} 
  For every $3$-polytope $\mathcal{P}$, there is a pair of disk packings, one consists of \emph{vertex-disks} representing $G(\mathcal{P})$, the other consists of \emph{face-disks} representing the dual graph $G(\mathcal{P}^*)$, such that:
  \begin{itemize}
    \item For each edge $e$ of $\mathcal{P}$, the vertex-disks corresponding to the two endpoints of $e$ and the face-disks corresponding to the two faces bounded by $e$ meet at a same point;
    \item A vertex-disk and a face-disk intersect iff the corresponding vertex is on the boundary of the corresponding face, in which case their boundaries intersect orthogonally.
  \end{itemize}
  This representation is unique up to M\"obius transformations.
\end{theorem}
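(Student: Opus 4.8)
The plan is to recognise the desired primal--dual configuration as an \emph{orthogonal circle pattern} on the vertex--face incidence graph of $\mathcal{P}$, and then to establish its existence and uniqueness by the same variational and rigidity machinery that underlies the Koebe--Andreev--Thurston theorem (Theorem~\ref{thm:diskpack}). By Steinitz's theorem $G(\mathcal{P})$ is a $3$-connected planar graph, so I work on the sphere $\mathbb{S}^2\cong\hat{\mathbb{R}}^2$. Let $R$ be the \emph{radial graph}, whose vertices are the vertices and faces of $\mathcal{P}$ and whose edges record the incidences $v\in\partial f$; it is a quadrangulation of $\mathbb{S}^2$, each quadrilateral $u\,f\,v\,g$ corresponding to an edge $e=uv$ of $\mathcal{P}$ bounding faces $f,g$. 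The four-circles-meet-at-a-point condition, together with orthogonality of vertex- and face-disks, is exactly the statement that $C_u,C_v,C_f,C_g$ form a ``grid'' at the common point $p_e$: inverting at $p_e$ sends the two tangent vertex-disks to a pair of parallel lines and the two tangent face-disks to a perpendicular pair. Thus I want a circle pattern on $R$ in which every edge of $R$ carries intersection angle $\pi/2$ and the diagonals of each quadrilateral (the primal and dual edges) record tangencies.

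For existence I would parametrise such patterns by the logarithms $\rho_i=\log r_i$ of the radii of the vertex- and face-disks and impose the local closing-up conditions: around each vertex-disk the orthogonal neighbours must cover a total angle $2\pi$, and likewise around each face-disk. These angle functions are monotone in the radii, so the closing-up conditions are the Euler--Lagrange equations of a strictly convex functional (of Milnor--Lobachevsky type, as in Colin de Verdi\`ere's and Bobenko--Springborn's treatments of circle patterns). Establishing existence then amounts to showing that this functional attains its minimum at an interior point, i.e.\ that no radius degenerates to $0$ or $\infty$; alternatively one can run Thurston's relaxation/continuity argument and pin down the solution by a degree count. The solution on $\mathbb{S}^2$ is then transported back to $\hat{\mathbb{R}}^2$ by stereographic projection.

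For uniqueness I would use the standard maximum-principle (rigidity) argument: given two primal--dual packings, normalise by a M\"obius transformation and examine the vertex- or face-disk where the ratio of corresponding radii is largest; monotonicity of the angle functions forces every neighbour to share that extremal ratio, and by $3$-connectivity the ratio propagates to all disks, so the two packings agree up to the M\"obius normalisation. The main obstacle is the existence step, and specifically ruling out degenerations of the minimiser: one must verify the Andreev-type admissibility of the angle data, which is where $3$-connectivity (Steinitz) enters---it guarantees that no proper subcomplex can ``collapse'' and forces the angle sums to be realisable. This is precisely the feature that fails for general planar graphs and makes the polytopality hypothesis essential, just as in Theorem~\ref{thm:diskpack}.
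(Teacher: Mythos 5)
The paper does not actually prove this statement: it is quoted from Brightwell and Scheinerman~\cite{brightwell1993} and used as a black box in the discussion of edge-tangent polytopes, so there is no in-paper argument to compare yours against. Judged on its own terms, your plan is the standard modern route to the primal--dual circle packing theorem: pass to the radial graph (a quadrangulation of $\mathbb{S}^2$, using $3$-connectedness from Steinitz's theorem), seek an orthogonal circle pattern on it, obtain existence from a convex variational principle of Colin de Verdi\`ere/Bobenko--Springborn type, and obtain uniqueness from a maximum-principle rigidity argument. This is a legitimate proof strategy (different from Brightwell and Scheinerman's original iterative argument) and it does yield the theorem.

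Two gaps keep this a roadmap rather than a proof. First, the step you yourself flag as the main obstacle --- showing that the minimiser of the functional is attained at an interior point, i.e.\ that no radius degenerates to $0$ or $\infty$ --- is where essentially all the work lies; saying that $3$-connectivity ``guarantees that no proper subcomplex can collapse'' states the conclusion of that analysis rather than proving it. One must bound the functional (or the angle sums) on the boundary strata of the parameter domain by a combinatorial estimate over proper subsets of vertices of the radial graph, and this estimate is exactly the Andreev-type admissibility condition you would need to verify. Second, your argument produces the tangencies and orthogonal crossings for \emph{incident} pairs, but the theorem also asserts the ``only if'' direction of the second bullet: a vertex-disk and a face-disk corresponding to a non-incident pair must be disjoint, and the two families must each be genuine packings. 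This global disjointness does not follow from the local closing-up conditions alone; it is usually extracted from the fact that the orthogonal quadrilaterals of the pattern tile the sphere without overlap, which requires a separate (if routine) argument. Both gaps are closed in the literature you invoke, but they should be acknowledged as the substantive content of the existence half.
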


The presence of the face-disks and the orthogonal intersections guarantee the incidence relations between vertices and faces, and therefore fix the combinatorial type of the polytope.  We can generalize this statement into higher dimensions:

\begin{theorem}\label{thm:orthoball}
  Given a $(d+1)$-polytope $\mathcal{P}$, if there is a packing of $d$-dimensional \emph{vertex-balls} representing $G(\mathcal{P})$, together with a collection of $(d-1)$-dimensional \emph{facet-balls} indexed by the facets of $\mathcal{P}$, such that:
  \begin{itemize}
    \item For each edge $e$ of $\mathcal{P}$, the vertex-balls corresponding to the two endpoints of $e$ and the boundaries of the facet-balls corresponding to the facets bounded by $e$ meet at a same point;
    \item Either a vertex-ball and a facet-ball are disjoint, or their boundaries intersect at a non-obtuse angle;
    \item The boundary of a vertex-ball and the boundary of a facet-ball intersect \emph{orthogonally} iff the corresponding vertex is on the boundary of the corresponding facet.
  \end{itemize}
  Then $\mathcal{P}$ has an edge-tangent realization.
\end{theorem}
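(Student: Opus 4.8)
The plan is to feed the vertex-ball packing into the polar-vertex construction of Section~\ref{sec:discuss} and then use the facet-balls to certify that the resulting polytope has precisely the face lattice of $\mathcal{P}$. First I would stereographically project the vertex-ball packing $\mathcal{S}$ onto $\mathbb{S}^d$, applying a M\"obius transformation beforehand so that every resulting cap has spherical radius smaller than $\pi/2$; write $\v_v$ for the polar vertex of the cap of a vertex $v$, and set $\mathcal{P}'=\operatorname{conv}\{\v_v\}$. By the spanning-subgraph theorem proved just above, each $\v_v$ is genuinely a vertex of $\mathcal{P}'$, and for every tangency $vw$ the segment $\v_v\v_w$ is tangent to $\mathbb{S}^d$. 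Thus $\mathcal{P}'$ is already edge-tangent along the edges coming from $G(\mathcal{S})$; the only thing that could fail is that $\mathcal{P}'$ carries extra edges, equivalently that its face lattice differs from that of $\mathcal{P}$, and excluding this is exactly where the facet-balls enter, just as the face-disks do in Theorem~\ref{thm:orthodisk}.

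Next I would read each facet-ball through the same polar dictionary. A facet $f$ yields a pole $\v_f$ and a hyperplane $H_f=\{\x:\langle\v_f,\x\rangle=1\}$. Extending the equivalences recorded in Section~\ref{sec:discuss}, the orthogonality hypothesis says $\langle\v_v,\v_f\rangle=1$ precisely when $v$ lies on $f$, while the ``disjoint or non-obtuse'' hypothesis forces $\langle\v_v,\v_f\rangle\le 1$ in every case, the inequality being strict when $v\notin f$. (The one computation needed here is a closed formula for $\langle\v,\v'\rangle$ in terms of the two spherical radii and the intersection angle of the caps; it is monotone in the angle, equals $1$ at orthogonality and is $<1$ for disjoint caps, so the entire non-obtuse range lands in $\langle\v,\v'\rangle\le 1$.) Consequently $H_f$ is a supporting hyperplane of $\mathcal{P}'$, and $\operatorname{conv}\{\v_v:v\in f\}=\mathcal{P}'\cap H_f$ is a face of $\mathcal{P}'$ whose vertex set is exactly the vertex set of $f$.

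It then remains to promote this family of faces into a combinatorial isomorphism $\mathcal{P}'\cong\mathcal{P}$. The faces $\operatorname{conv}\{\v_v:v\in f\}$, as $f$ runs over the facets of $\mathcal{P}$, are honest faces of the convex body $\mathcal{P}'$, glued along shared lower faces exactly as in $\partial\mathcal{P}$; hence their union is a closed $d$-dimensional submanifold of the $d$-sphere $\partial\mathcal{P}'$, and so must be all of $\partial\mathcal{P}'$. This identifies the two face lattices, so $\mathcal{P}'$ is combinatorially equivalent to $\mathcal{P}$ and in particular $G(\mathcal{P}')\cong G(\mathcal{S})$ has no spurious edges. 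Since every edge of $\mathcal{P}'$ is then tangent to $\mathbb{S}^d$, the polytope $\mathcal{P}'$ is the desired edge-tangent realization of $\mathcal{P}$.

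I expect the main obstacle to be this last step rather than the pointwise inequality: one must ensure that $\mathcal{P}'$ is genuinely $(d+1)$-dimensional, so that $\partial\mathcal{P}'$ really is a $d$-sphere and the covering argument applies, and that no degenerate configuration of caps---for instance all centers lying on a common subsphere---survives the normalization making every radius smaller than $\pi/2$. These non-degeneracy checks, together with verifying that the hyperplanes $H_f$ are distinct and in sufficiently general position to force the vertices to span, are the delicate points. The translation of the angle hypotheses into the supporting-hyperplane inequalities is, by contrast, a direct extension of the computations already carried out in Section~\ref{sec:discuss}.
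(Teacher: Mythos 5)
Your proposal is correct in approach and matches the paper's intended argument: the paper gives no detailed proof of this theorem, only the one-sentence justification that the disjointness and non-obtuse intersections guarantee convexity while the orthogonal intersections guarantee the incidence relations, and your translation of the angle hypotheses into $\langle \mathbf{v}_v,\mathbf{v}_f\rangle\le 1$ with equality exactly at orthogonality, hence into supporting hyperplanes of the polar polytope cutting out the prescribed facets, is exactly that argument made explicit. The non-degeneracy and boundary-covering issues you flag at the end (that each face $\operatorname{conv}\{\mathbf{v}_v : v\in f\}$ is genuinely $d$-dimensional and that these faces exhaust the boundary) are indeed the delicate points, and the paper leaves them implicit as well.
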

Again, the convexity is guaranteed by the disjointness and nonobtuse intersections, and the incidence relations are guaranteed by the orthogonal intersections.  For an edge-tangent polytope, the facet-balls can be obtained by intersecting the midsphere with the facets.  However, they do not form a $d$-ball packing for $d>2$.  On the other hand, for an arbitrary polytope of dimension $4$ or higher, even if its graph is ball packable, the facet-balls satisfying the conditions of Theorem~\ref{thm:orthoball} do not in general exist. 

For example, consider the stacked $4$-polytope with $7$ vertices. The packing of its graph (with the form $K_3\star P_4$) is constructed in the proof of Theorem~\ref{thm:pk}. We notice that a ball whose boundary orthogonally intersects the boundary of the three unit balls and the boundary of ball $\mathsf{C}$, have to intersect the boundary of ball~$\mathsf{D}$ orthogonally (see Figure~\ref{pic:C6K3}), thus violates the last condition of Theorem~\ref{thm:orthoball}.  One verifies that the polytope constructed from this packing is not simplicial.

\subsection{Stress freeness}
Given a ball packing $\mathcal{S}=\{S_1,\cdots,S_n\}$, let $\v_i$ be the vertices of the polytope $\mathcal{P}$ constructed as above. A \emph{stress} of $\mathcal{S}$ is a real function $T$ on the edge set of $G(\mathcal{S})$ such that for all $S_i\in\mathcal{S}$ \[\sum_{S_iS_j \text{ edge of } G(\mathcal{S})}T(S_iS_j)(\v_j-\v_i)=0\] We can view stress as forces between tangent spherical caps when all caps are in equilibrium. We say that $\mathcal{S}$ is \emph{stress-free} if it has no non-zero stress.

\begin{theorem}
  If the graph of a stacked $(d+1)$-polytope is $d$-ball packable, its ball packing is stress-free.
\end{theorem}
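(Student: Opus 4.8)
The plan is to prove stress-freeness by induction on the number of vertices, reducing at each step to a single linear-independence statement, exactly as in the standard rigidity-theoretic $0$-extension (Henneberg) argument. The crucial local fact, which I isolate as a claim and which also settles the base case, is: \emph{the $d+2$ polar vertices of a Descartes configuration are affinely independent in $\mathbb{R}^{d+1}$.} Granting this, the base case $K_{d+2}$ is immediate, since the complete-graph framework on $d+2$ affinely independent points is a simplex: at each vertex the $d+1$ incident edge directions are linearly independent, so the equilibrium equation forces every incident stress coefficient to vanish. For the inductive step, let $\mathcal{P}$ be a stacked $(d+1)$-polytope on more than $d+2$ vertices with $d$-ball packable graph, realized by the packing $\mathcal{S}$ with polar vertices $\v_i$, and let $v$ be a vertex of degree $d+1$ (a leaf of the dual tree). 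Deleting $v$ gives a smaller stacked $(d+1)$-polytope $\mathcal{P}'$ whose graph is an induced subgraph of $G(\mathcal{P})$, hence $d$-ball packable; by Theorem~\ref{thm:ApolUniq} its packing is the restriction of $\mathcal{S}$ with the same polar vertices $\{\v_i\}_{i\neq v}$, so by the induction hypothesis that smaller framework is stress-free.

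Everything thus hinges on the affine-independence claim, and this is the step I expect to be the main obstacle; I would prove it by a direct Gram-matrix computation using the inner-product relations already set up in the Discussion. After the Möbius normalization making all spherical radii $\theta_i\in(0,\pi/2)$, write $\v_i=\c_i/\cos\theta_i$ with $\c_i\in\mathbb{S}^d$ and set $t_i=\tan\theta_i>0$. External tangency (angular distance $\theta_i+\theta_j$ between centers) gives
\[
\langle\v_i,\v_i\rangle=1+t_i^2,\qquad \langle\v_i,\v_j\rangle=1-t_it_j\quad(i\neq j),
\]
so the Gram matrix is $G=\e\e^\intercal-\t\t^\intercal+2\operatorname{diag}(t_i^2)$, where $\e$ is all-ones and $\t=(t_1,\dots,t_{d+2})^\intercal$. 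Since the $\v_i$ lie in $\mathbb{R}^{d+1}$ the matrix $G$ is singular, so there is a nonzero $\mathbf k\in\ker G$. Reading $G\mathbf k=0$ componentwise yields $2t_i^2k_i=Q t_i-P$, where $P=\sum_i k_i$ and $Q=\sum_i t_i k_i$. If $P=0$ then $k_i=Q/(2t_i)$, whence $Q=\sum_i t_i k_i=\tfrac{1}{2}(d+2)Q$, which forces $Q=0$ (as $d>0$) and then $\mathbf k=0$, a contradiction. Hence \emph{every} nonzero kernel vector has $\sum_i k_i\neq0$, i.e.\ $\ker G\cap\e^\perp=\{0\}$; as affine dependences among the $\v_i$ are exactly $\ker G\cap\e^\perp$, the points are affinely independent.

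With the claim established the induction closes cleanly. Applying it to the Descartes configuration formed by $S_v$ together with the $d+1$ pairwise tangent balls of the facet onto which $v$ was stacked shows that the vectors $\v_j-\v_v$ for $j\sim v$ are linearly independent, so the equilibrium equation $\sum_{j\sim v}T(\{v,j\})(\v_j-\v_v)=0$ forces $T$ to vanish on every edge at $v$. Consequently the restriction of $T$ to $\mathcal{P}'$ is a stress of the smaller packing, hence zero by induction, and therefore $T\equiv0$; so $\mathcal{S}$ is stress-free. I would note in passing that the edge count of a stacked $(d+1)$-polytope is $(d+1)n-\binom{d+2}{2}$, which is precisely the isostatic count in $\mathbb{R}^{d+1}$, so stress-freeness is equivalent to infinitesimal rigidity of the polar-vertex framework, and the argument above is exactly the inductive proof of that rigidity; the sole nontrivial ingredient is the non-degeneracy supplied by the Gram-matrix claim.
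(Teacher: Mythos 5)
Your proof is correct, and it shares the paper's overall skeleton (peel off the last-added ball, kill the stress on its $d+1$ incident edges, then induct down the construction order), but the justification of the local step is genuinely different. The paper argues via convexity of the lifted polytope $\mathcal{P}$: the stress coefficients at the last vertex $\v$ cannot all have the same sign because $\v$ is a vertex of a convex polytope, and then a hyperplane through $\v$ separating the positively- and negatively-stressed neighbours contradicts equilibrium. That separating hyperplane exists precisely because $\v$ and its $d+1$ neighbours are affinely independent, a fact the paper leaves implicit; your Gram-matrix computation ($G=\e\e^\intercal-\t\t^\intercal+2\operatorname{diag}(t_i^2)$, and $\ker G\cap\e^\perp=\{0\}$ from the componentwise relation $2t_i^2k_i=Qt_i-P$) proves exactly this affine independence for the polar vertices of any Descartes configuration, after which the sign discussion becomes unnecessary: the $d+1$ edge directions at $\v$ are linearly independent, so the equilibrium equation kills the coefficients outright. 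The computation checks out (external tangency of caps gives $\langle\v_i,\v_j\rangle=1-t_it_j$ and $\langle\v_i,\v_i\rangle=1+t_i^2$ under the paper's normalization that all spherical radii are below $\pi/2$, and affine dependences are exactly $\ker G\cap\e^\perp$ since $G$ is the Gram matrix). So your argument buys a self-contained, fully rigorous local lemma at the cost of an explicit calculation, whereas the paper's version is shorter but leans on an unstated nondegeneracy; your closing remark that the edge count of a stacked $(d+1)$-polytope is isostatic in $\mathbb{R}^{d+1}$, so that stress-freeness here is equivalent to infinitesimal rigidity of the polar-vertex framework, is a nice observation absent from the paper.
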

\begin{proof}
  We construct the ball packing as we did in the proof of Theorem~\ref{thm:ApolUniq}, and assume a non-zero stress.  The last ball $S$ that is added into the packing has $d+1$ ``neighbor'' balls tangent to it. Let $\v$ be the vertex of $\mathcal{P}$ corresponding to $S$, and $C$ the correponding spherical caps on $\mathbb{S}^d$. If the stress is not zero on all the $d+1$ edges incident to $\v$, since $\mathcal{P}$ is convex, they can not be of the same sign. So there must be a hyperplane containing $\v$ separating positive edges and negative edges of $\v$. This contradicts the assumption that the spherical cap corresponding to $\v$ is in equilibrium. So the stress must vanish on the edges incident to $\v$. We then remove $S$ and repeat the same argument on the second last ball, and so on, and finally conclude that the stress has to be zero on all the edges of $G(\mathcal{S})$.
\end{proof}

The above theorem, as well as the proof, was informally discussed in Kotlov, Lov\'asz and Vempala's paper on Colin de Verdi\`ere number~\cite{kotlov1997}*{Section~8}.  In that paper, the authors defined an graph invariant $\nu(G)$ using the notion of stress-freeness, which turns out to be strongly related to Colin de Verdi\`ere number.  Their results imply that if the graph $G$ of a stacked $(d+1)$-polytope with $n$ vertices is $d$-ball packable, then $\nu(G)\leq d+2$, and the upper bound is achieved if $n\geq d+4$. However, Theorem~\ref{thm:pk} asserts that graphs of stacked polytopes are in general not ball packable.

\section*{Acknowledgement}
I'd like to thank Fernando M\'ario de Oliveira Filho, Jean-Philippe Labb\'e, Bernd Gonska and Prof. G\"unter M. Ziegler for helpful discussions.

\bibliography{References}

\end{document}